\documentclass[11pt,a4paper,reqno,dvipdfmx]{amsart}

\usepackage[truedimen,margin=25truemm]{geometry} 
\usepackage{amsmath,amssymb,amsthm,amscd}
\usepackage{extarrows,mathtools}

\title[Explicit description of certain 3-point K-theoretic Gromov-Witten invariants]
{Explicit description of certain 3-point K-theoretic Gromov-Witten invariants for flag manifolds}

\author[S.~Naito]{Satoshi Naito}
\address[Satoshi Naito]{Department of Mathematics, Institute of Science Tokyo, 
2-12-1 Oh-okayama, Meguro-ku, Tokyo 152-8551, Japan.}
\email{naito.s.ac@m.titech.ac.jp}

\author[D.~Sagaki]{Daisuke Sagaki}
\address[Daisuke Sagaki]{Department of Mathematics, 
Institute of Pure and Applied Sciences, University of Tsukuba, 
1-1-1 Tennodai, Tsukuba, Ibaraki 305-8571, Japan.}
\email{sagaki@math.tsukuba.ac.jp}

\keywords{quantum $K$-theory, divisor axiom, Gromov-Witten invariants, 
quantum Bruhat graph, Chevalley formula \newline
Mathematics Subject Classification 2020: 
Primary 14N35; Secondary 14M15, 14N15, 14N10, 05E14.}

\allowdisplaybreaks
\numberwithin{equation}{section}

\newcommand{\Fg}{\mathfrak{g}}

\newcommand{\Ft}{\mathfrak{t}}

\newcommand{\BZ}{\mathbb{Z}}
\newcommand{\BQ}{\mathbb{Q}}
\newcommand{\BR}{\mathbb{R}}
\newcommand{\BC}{\mathbb{C}}

\newcommand{\BP}{\mathbb{P}}

\newcommand{\CA}{\mathcal{A}}

\newcommand{\CO}{\mathcal{O}}
\newcommand{\CM}{\mathcal{M}}

\newcommand{\sQ}{\mathsf{Q}}
\newcommand{\sS}{\mathsf{S}}

\newcommand{\sC}{\mathsf{C}}
\newcommand{\st}{\mathsf{t}}

\newcommand{\vp}{\varphi}
\newcommand{\vpi}{\varpi}
\newcommand{\eps}{\epsilon}

\newcommand{\be}{\mathbf{e}}
\newcommand{\bk}{\mathbf{k}}
\newcommand{\bm}{\mathbf{m}}
\newcommand{\bp}{\mathbf{p}}
\newcommand{\bq}{\mathbf{q}}

\newcommand{\bB}{\mathbf{B}}
\newcommand{\bC}{\mathbf{C}}
\newcommand{\bI}{\mathbf{I}}
\newcommand{\bK}{\mathbf{K}}
\newcommand{\bM}{\mathbf{M}}
\newcommand{\bzero}{\mathbf{0}}

\DeclareMathOperator{\wt}{wt}
\DeclareMathOperator{\qwt}{qwt}
\DeclareMathOperator{\ed}{end}

\DeclareMathOperator{\ev}{ev}
\DeclareMathOperator{\gch}{gch}

\DeclareMathOperator{\dnn}{down}
\DeclareMathOperator{\codim}{codim}
\DeclareMathOperator{\Inv}{Inv}

\newcommand{\af}{\mathrm{af}}

\newcommand{\pt}{\mathrm{pt}}

\newcommand{\Hom}{\mathrm{Hom}}
\newcommand{\QBG}{\mathrm{QBG}}

\newcommand{\Lie}{\mathrm{Lie}}

\newcommand{\edge}[1]{ \xrightarrow{\hspace{2pt}#1\hspace{2pt}} }
\newcommand{\Qe}[1]{ \xrightarrow[\mathsf{q}]{\hspace{2pt}#1\hspace{2pt}} }
\newcommand{\Be}[1]{ \xrightarrow[\mathsf{B}]{\hspace{2pt}#1\hspace{2pt}} }

\newcommand{\lng}{w_{\circ}}

\newcommand{\ti}[1]{\widetilde{#1}}

\newcommand{\ol}[1]{\overline{#1}}

\newcommand{\pair}[2]{\langle #1,\,#2 \rangle}
\newcommand{\Kmet}[2]{(\!( #1,\,#2 )\!)}
\newcommand{\bra}[1]{[\![#1]\!]}
\newcommand{\pra}[1]{(\!(#1)\!)}

\newcommand{\twp}[2]{ \langle \CO^{#1}, \CO_{#2} \rangle }
\newcommand{\twpx}[2]{ \langle #1, #2 \rangle }

\newcommand{\thpx}[3]{ \langle #1, #2, #3 \rangle }

\newcommand{\QG}{\mathbf{Q}_{G}}
\newcommand{\QGr}{\mathbf{Q}_{G}^{\mathrm{rat}}}

\newcommand{\LQG}[1]{[\CO_{\QG}(#1)]}
\newcommand{\SQG}[1]{[\CO_{\QG(#1)}]}

\newcommand{\bfR}{\mathbf{R}}
\newcommand{\sprod}{\sideset{}{^\star}\prod}

\newcommand{\bchi}{\pmb{\chi}}
\newcommand{\Par}{\mathop{\rm Par}\nolimits}

\theoremstyle{plain}
\newtheorem{lem}{Lemma}[section]
\newtheorem{prop}[lem]{Proposition}
\newtheorem{thm}[lem]{Theorem}
\newtheorem{cor}[lem]{Corollary}

\newtheorem{ithm}{Theorem}

\theoremstyle{definition}
\newtheorem{dfn}[lem]{Definition}

\theoremstyle{remark}
\newtheorem{ex}[lem]{Example}
\newtheorem{rem}[lem]{Remark}
\newtheorem{claim}{Claim}[lem]

\newtheorem{irem}[ithm]{Remark}

\newcommand{\bqed}{\quad \hbox{\rule[-0.5pt]{3pt}{8pt}}}

\newenvironment{enu}{%
 \begin{enumerate}%
}{\end{enumerate}}

\begin{document}


%
\begin{abstract}
We give an explicit description, in terms of the quantum Bruhat graph, 
of the (torus-equivariant) 3-point, genus 0, $K$-theoretic Gromov-Witten invariants 
$\langle \CO(- \lambda), \CO^{w}, \CO_{u} \rangle_{d}$ for 
the (full) flag manifold $X = G/B$, where $\CO(- \lambda)$ denotes 
the class in the (torus-equivariant) $K$-theory ring $K_{T}(X)$ of $X$ of 
the line bundle $\CO_{X}(- \lambda) = G \times_{B} \BC_{\lambda}$ over $X = G/B$ 
associated to a weight $\lambda \in W \varpi_i$ lying 
in the Weyl group orbit of a minuscule fundamental weight $\varpi_i$, 
and $\CO_{u}$, $\CO^{w}$ are the Schubert and opposite Schubert classes 
in $K_{T}(X)$ for $u, w \in W$. 
This result can be thought of as a partial generalization of 
the quantum $K$-theoretic divisor axiom, which we obtained 
in our previous work; our proof utilizes a generalization of 
the Chevalley formula in the (torus-equivariant) quantum $K$-theory ring 
$QK_{T}(X)$ of $X$, which computes the quantum product 
with the line bundle class $\CO(- \lambda)$ associated to the weight $\lambda$ above. 
\end{abstract}

\maketitle

\section{Introduction.}

Let $G$ be a connected, simply-connected, simple (linear) algebraic group over $\BC$, 
with $B$ a Borel subgroup and $T$ a maximal torus contained in $B$; 
let $W = N_{G}(T)/T$ be the Weyl group of $G$ generated by 
the simple reflections $s_j$, $j \in I$. 
Let $X := G/B$ be the corresponding (full) flag manifold. 

For an effective degree $d \in H_2(X; \BZ)$ and $m\geq 0$, 
we let $\ol{\CM}_{0,m}(X,d)$ denote 
the Kontsevich moduli space of $m$-point stable maps to 
$X$ of genus zero and degree $d$ (see \cite{FP}, \cite{Tho}). 
This moduli space is non-empty when $d\neq 0$ or $m\geq 3$. 
In this case, it is equipped with ($T$-equivariant) evaluation maps 
$\ev_k:\ol{\CM}_{0,m}(X,d)\to X$ for $1 \leq k\leq m$; 
the map $\ev_k$ sends a stable map to the image of the $k$-th marked point 
in its domain. For classes 
$\gamma_k \in K_{T}(X)$, $1 \leq k \leq m$, 
the corresponding $m$-point ($T$-equivariant) 
$K$-theoretic Gromov-Witten (KGW) invariant is defined to be
\begin{equation*}
\langle \gamma_1, \gamma_2, \ldots , \gamma_m \rangle_d := 
\chi^{T}_{\ol{\CM}_{0,m}(X,d)}\left(\prod_{k=1}^m\ev_k^*\gamma_k \right) 
\in K_{T}(\pt), 
\end{equation*}
where $\chi^{T}_{\ol{\CM}_{0,m}(X,d)} : K_{T}(\ol{\CM}_{0,m}(X,d)) \to K_{T}(\pt)$ is 
the pushforward map along the structure morphism : $\ol{\CM}_{0,m}(X,d) \to \pt$; 
here, $K_{T}(\pt)$ is isomorphic to the representation ring $R(T)$ of $T$, 
which is also identified with the group algebra $\BZ[\Lambda]$ of 
the weight lattice $\Lambda := \sum_{i \in I} \BZ \varpi_i$, 
with $\varpi_i$ for $i \in I$ the $i$-th fundamental weight for $G$. 

Most studies in the ($T$-equivariant) quantum $K$-theory of 
homogeneous spaces have focused on $3$-point ($T$-equivariant) KGW invariants. 
These invariants govern the ($T$-equivariant) small quantum $K$-theory ring 
$QK_T(X) = K_T(X) \otimes_{K_T(\pt)} K_T(\pt)\bra{\sQ}$ of $X = G/B$, 
equipped with the quantum product $\star$, 
where $K_{T}(\pt)\bra{\sQ}$ is the formal power series ring 
in the Novikov variables $\sQ:=(\sQ_j \mid j \in I)$ 
with coefficients in $K_{T}(\pt)$; the ring $QK_{T}(X)$ was 
introduced by \cite{Giv} and \cite{Lee}, as a deformation of 
the ordinary ($T$-equivariant) $K$-theory ring $K_T(X)$. 

Let $\Delta$ be the root system of $G$, 
$\bigl\{ \alpha_j \bigr\}_{j \in I} \subset \Delta$ 
the set of simple roots, $\bigl\{ \alpha_j^{\vee} \bigr\}_{j \in I} \subset \Delta^{\vee}$ 
the set of simple coroots, where $\Delta^{\vee}$ is the coroot system of $G$; 
note that the homology group $H_2(X; \BZ)$ is 
identified with the coroot lattice $Q^{\vee} := 
\sum_{j \in I} \BZ \alpha_j^{\vee}$ of $G$, 
with $\alpha_j^{\vee}$ corresponding to 
the class $[X_{s_j}]$ of the Schubert curve $X_{s_j} \subset X$ for $j \in I$. 
Let $\theta \in \Delta$ denote the highest root of $\Delta$, 
with $\theta^{\vee} \in \Delta^{\vee}$ its coroot; 
also, let $\vartheta^{\vee} \in \Delta^{\vee}$ denote 
the highest coroot in the coroot system $\Delta^{\vee}$, 
which is not to be confused with the coroot $\theta^{\vee}$ 
of the highest root $\theta$. 

For $u, w \in W$, let $X_{u} \subset X$ and $X^{w} \subset X$ be 
the Schubert and opposite Schubert varieties, respectively, 
and let $\CO_u$ (resp., $\CO^w$) denote the class in $K_{T}(X)$ 
of the structure sheaf $\CO_{X_{u}}$ of $X_u$ (resp., $\CO_{X^w}$ of $X^w$). 
Also, for a weight $\nu \in \Lambda = \sum_{i \in I} \BZ \varpi_i$, 
let $\CO(\nu)$ denote the class $[\CO_{X}(\nu)]$ in $K_{T}(X)$ of 
the $G$-equivariant line bundle $\CO_{X}(\nu)$ over $X = G/B$ constructed as the quotient 
$G \times^{B} \BC_{- \nu}$ associated to 
the one-dimensional $B$-module $\BC_{-\nu}$ of $B$ of weight $- \nu$. 

In our previous paper \cite{LNSX}, 
we proved the following theorem (see \cite[Theorems~3.1 and 3.2]{LNSX}); 
here, we have used the well-known identity $\CO^{s_i} = 1 - \be^{- \varpi_{i}} \CO(- \varpi_{i})$ in $K_{T}(X)$ (and hence in $QK_{T}(X)$) for $i \in I$. 
%
%
\begin{ithm}\label{ithm:divisor}
Let $i \in I$, and $u, \, w \in W$. 
Then, for an effective degree 
$d = \sum_{j \in I} d_j \alpha_j^{\vee} \in Q^{\vee,+} := 
\sum_{j \in I} \BZ_{\geq 0} \alpha_j^{\vee}$ such that $d_i=0$, 
the following holds in $QK_T(X)${\rm :} 
\begin{equation}\label{eq:part1}
\langle \CO(- \varpi_i), \CO^{w}, \CO_{u} \rangle_{d} =  
\langle \CO(- \varpi_i) \cdot \CO^{w}, \CO_{u} \rangle_{d},
\end{equation}
where $\CO(- \varpi_i) \cdot \CO^{w}$ denotes the ordinary product in $K_T(X)$. 
Also, for $i \in I$ such that $\pair{\vpi_i}{\theta^{\vee}} = 1$ and 
an effective degree 
$d = \sum_{j \in I} d_j \alpha_j^{\vee} \in Q^{\vee,+}$ such that $d_i > 0$, 
the following holds in $QK_T(X)${\rm :} 
\begin{equation} \label{eq:part2}
\langle \CO(- \varpi_i), \CO^w, \CO_u \rangle_{d} =  0. 
\end{equation}
\end{ithm}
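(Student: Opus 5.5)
The two assertions call for different arguments. Both rest on the same basic manipulation: rewrite the 3-point invariant as an Euler characteristic on a suitable space by pushing forward along a map whose fibers are rationally connected.

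\emph{Equality \eqref{eq:part1}.} Let $P_i \supset B$ be the maximal parabolic subgroup attached to $\varpi_i$, and let $p : X \to G/P_i$ be the projection. The line bundle $\CO_X(-\varpi_i)$ is $p^{*}$ of a line bundle on $G/P_i$. A stable map $f : C \to X$ of degree $d$ with $d_i = 0$ satisfies $p_*[f(C)] = 0$ in $H_2(G/P_i;\BZ) \cong \BZ$. Since $C$ is connected and every irreducible component of $f(C)$ has nonnegative $p$-degree, $p \circ f$ is constant. Hence $\ev_1^{*}\CO(-\varpi_i) \cong \ev_2^{*}\CO(-\varpi_i)$ on $\ol{\CM}_{0,3}(X,d)$, and both are pulled back from $G/P_i$ via $p \circ \ev_1 = p \circ \ev_2$. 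Next let $\pi : \ol{\CM}_{0,3}(X,d) \to \ol{\CM}_{0,2}(X,d)$ forget the first point. Its fibers are the universal curves, which are connected nodal rational curves, so $\pi_{*}\CO = \CO$ with no higher direct images. The projection formula, applied to $\ev_2^{*}(\CO(-\varpi_i)\cdot\CO^w) \otimes \ev_3^{*}\CO_u$, which is pulled back along $\pi$, then gives \eqref{eq:part1} at once. For $d = 0$ one works with the 3-point space directly, and the statement is trivial.

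\emph{Vanishing \eqref{eq:part2}.} Consider $M_d := \ev_2^{-1}(X^w) \cap \ev_3^{-1}(X_u) \subset \ol{\CM}_{0,3}(X,d)$ and its image $\Gamma_d := \ev_1(M_d) \subset X$. I will use two results of Buch--Chaput--Mihalcea--Perrin. First, the Gromov--Witten variety $M_d$ is irreducible, rational with rational singularities, and represents $\ev_2^*\CO^w\cdot\ev_3^*\CO_u$. Second, $\ev_1 : M_d \to \Gamma_d$ is cohomologically trivial, i.e.\ $(\ev_1)_*\CO_{M_d} = \CO_{\Gamma_d}$. By the projection formula, $\langle \CO(-\varpi_i), \CO^w, \CO_u\rangle_d = \chi^T(\Gamma_d, \CO(-\varpi_i))$. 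It therefore suffices to show that $\Gamma_d = q^{-1}(q(\Gamma_d))$, where $q : X \to G/P_{\{i\}}$ is the projection to the partial flag variety of the minimal parabolic $P_{\{i\}}$. Indeed, $q$ is a $\BP^1$-bundle on whose fibers $\CO(-\varpi_i)$ restricts to $\CO(-1)$. Hence $Rq_{*}\CO(-\varpi_i) = 0$, and the same holds after restricting to a $q$-saturated subvariety $\Gamma_d$. If $q(\Gamma_d)$ has rational singularities, which follows because it is a projected Richardson variety, we get $\chi(\Gamma_d, \CO(-\varpi_i)) = 0$.

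The main obstacle is this saturation statement $\Gamma_d = q^{-1}q(\Gamma_d)$ when $d_i > 0$ and $\pair{\varpi_i}{\theta^\vee} = 1$. I would proceed through the Hecke-product description $\Gamma_d(X^w, X_u) = \bigcup X_{u \cdot z}\cap X^{w\cdot z'}$ over degrees $\le d$, together with the fact that $\Gamma_d$ is increasing in $d$. Using $d \ge \alpha_i^\vee$, every point of $\Gamma_d$ is joined to a point of $\Gamma_{d - \alpha_i^\vee}$-type loci, and one can concatenate with the $s_i$-line, which is exactly a fiber of $q$, without leaving $\Gamma_d$. The cominuscule hypothesis should enter here. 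It guarantees that the maximal degree $z_d$ of the Hecke product is unchanged under right multiplication by $s_i$: because $\langle\varpi_i,\theta^\vee\rangle = 1$, removing $\alpha_i^\vee$ from the degree does not shrink the reachable $P_{\{i\}}$-saturated locus, so $z_d s_i = z_d$ in the Hecke monoid. Verifying this Hecke identity via the quantum Bruhat graph is the step I expect to require the most care.
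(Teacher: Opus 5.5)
\textbf{Part \eqref{eq:part1}.} Your argument is correct, and it takes a genuinely different route from the paper. The paper only quotes this statement from earlier work. It proves the generalization, Theorem~\ref{thm:3pt-lam2}, algebraically, in three steps:
\begin{enumerate}
\item it expands $\CO(-\lambda)\star\CO^w$ by \eqref{eq:gchev3};
\item it uses that $\langle\CO^v,\CO_u\rangle_{d'}$ equals $1$ or $0$ according as $d'\ge\qwt(v\Rightarrow u)$ or not;
\item it checks that $\sS^{v,\zeta}_{\lambda,w}$ differs from $\sC^{v}_{\lambda,w}$ only by terms divisible by some $\sQ_j$ with $\pair{\lambda}{\alpha_j^\vee}\neq0$.
\end{enumerate}
Your argument bypasses all of this. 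Since $p$ contracts every stable map of degree $d$, you get $p\circ\ev_1=p\circ\ev_2$, and the forgetful map with $R\pi_*\CO=\CO$ finishes the proof. It also gives more. Since $\CO(-\lambda)$ is pulled back from $G/P_J$ with $J=\{j:\pair{\lambda}{\alpha_j^\vee}=0\}$, it proves Theorem~\ref{thm:3pt-lam2} for every $\lambda\in\Lambda$, with no minuscule hypothesis. What the paper's route buys in exchange is the explicit formula of Theorem~\ref{thm:3pt-lam}.

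\textbf{Part \eqref{eq:part2}.} Here there is a genuine gap, and the reduction you propose cannot be completed.

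First, you cite two results of Buch--Chaput--Mihalcea--Perrin: cohomological triviality of $\ev_1\colon M_d\to\Gamma_d$ for three-point Gromov--Witten varieties, and the description of projected Gromov--Witten varieties as projected Richardson varieties. These are established for cominuscule $G/P$; nothing of the kind is available for $G/B$, so you cannot quote them there.

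More decisively, the saturation statement $\Gamma_d=q^{-1}q(\Gamma_d)$ is false. Take $G=SL_4$, $i=2$, $d=\theta^\vee=\alpha_1^\vee+\alpha_2^\vee+\alpha_3^\vee$ (so $\pair{\vpi_2}{\theta^\vee}=1$ and $d_2=1$), and $u=w=e$. Then $\Gamma_d$ is the union of all degree-$d$ curves through the standard flag $F_\bullet=1.B$.
\begin{itemize}
\item Each such curve maps onto a line through $F_2$ in $Gr(2,4)$, so every $E_\bullet\in\Gamma_d$ satisfies $E_2\cap F_2\neq0$.
\item The point $s_\theta.B$ lies in $\Gamma_d$, because a $T$-stable curve of class $\theta^\vee$ joins it to $1.B$.
\item The $q$-fibre through $s_\theta.B$ contains $s_\theta s_2.B=\lng.B$. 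Its $2$-plane $\langle e_3,e_4\rangle$ meets $F_2=\langle e_1,e_2\rangle$ trivially, so this point is not in $\Gamma_d$.
\end{itemize}
In Buch--Mihalcea's terms, $\Gamma_d=X_{s_\theta}$, and $s_\theta\cdot s_2=\lng\neq s_\theta$. So your hoped-for Hecke identity $z_d\cdot s_i=z_d$ fails too, even though the hypothesis holds.

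Nevertheless $\chi(\Gamma_d,\CO(-\vpi_2))=0$ here. The reason is $\CO(-\vpi_i)=\be^{\vpi_i}(1-\CO^{s_i})$, which gives $\chi(X_v,\CO(-\vpi_i))=0$ for every $v\ge s_i$. So the vanishing is not explained by saturation. The hypothesis must also enter through some other mechanism. Note that it is not the cominuscule condition: in type $C_n$ one has $\theta^\vee=\sum_j\alpha_j^\vee$, so it holds for every $i$.

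The mechanism that actually works is combinatorial. One expands via the quantum $K$-Chevalley formula, inserts the two-point formula, and cancels terms by a sign-reversing involution on paths in $\QBG(W)$. For minuscule $\vpi_i$ this is already contained in Theorem~\ref{thm:3pt-lam}:
\begin{itemize}
\item For $\lambda=\vpi_i$ one has $x=e$, so any $\bm\in\bfR_{w,u,d}$ is the trivial path at $w$.
\item The conditions $\iota(\bq_{\bm})\notin\Delta_1^+$ and $\Delta_0^+\lhd\Delta_1^+$ force every label of the label-decreasing path $\bq_{\bm}$ into $\Delta_0^+$.
\item Hence $\pair{\vpi_i}{\qwt(w\Rightarrow u)}=0\neq d_i$, so $\bfR_{w,u,d}=\emptyset$ and the invariant vanishes.
\end{itemize}
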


Moreover, for arbitrary $i \in I$, $u, w \in W$, and 
effective degree $d \in Q^{\vee,+}$, we gave an explicit description of 
the 3-point KGW invariant $\langle \CO(- \varpi_i), \CO^{w}, \CO_{u} \rangle_{d}$ 
in terms of the quantum Bruhat graph associated to $W$. 

In this paper, we consider a weight $\lambda \in \Lambda$ of 
the form $\lambda = x \varpi_i$ for some $x \in W$, where $\varpi_i$ is a minuscule fundamental weight, 
i.e., the fundamental weight corresponding to an index $i \in I$ 
such that $\pair{\vpi_i}{\vartheta^{\vee}} = 1$, 
where $\vartheta^{\vee} \in \Delta^{\vee}$ is the highest coroot; 
note that in this case, we have $\pair{\lambda}{\alpha^{\vee}} = 0,\,\pm 1$ 
for all coroots $\alpha^{\vee} \in \Delta^{\vee}$. 

Let $\QBG(W)$ be the quantum Bruhat graph associated to 
the Weyl group $W$ of $G$ (for details, see Definition~\ref{dfn:QBG}). 
For a directed path $\bm$ in $\QBG(W)$, let $\ell(\bm)$ be its length, 
$\ed(\bm) \in W$ its endpoint, and $\qwt(\bm) \in Q^{\vee,+}$ 
its quantum weight (i.e., the sum of the coroots of the labels 
of all quantum edges in $\bm$); for details, see Section~\ref{subsec:QBG}. 
Also, for $w \in W$, let $\bM_{w}$ denote a certain set of 
directed paths (whose labels are increasing with respect to a fixed reflection order) 
$\bm$ in $\QBG(W)$ starting at $w \in W$ such that 
$\bm$ is the concatenation of directed paths 
$\bm_{\beta}$ (the $\beta$-part) and $\bm_{\gamma}$ (the $\gamma$-part) 
in $\QBG(W)$; for details, see \eqref{eq:def-bMw}.

\begin{irem}
In the special case that $w = e$, 
we can show that any directed path $\bm$ 
in the set $\bM_{e}$ does not contain quantum edges, 
and hence $\qwt(\bm) = 0$ for all $\bm \in \bM_{e}$; 
see Corollary~\ref{cor:gc1}. 
This fact is essential in the proof of Theorem~\ref{ithm:gchevalley} below. 
\end{irem} 

For $u,\,w \in W$ and 
$d = \sum_{j \in I} d_j \alpha_j^{\vee} \in Q^{\vee,+}$, with $d_j \geq 0$ for $j \in I$, 
the 3-point KGW invariant $\langle \CO(- \lambda), \CO^{w}, \CO_{u} \rangle_{d}$ 
can be explicitly described in terms of $\QBG(W)$ as follows (see Theorem~\ref{thm:3pt-lam}); 
note that there do not appear any negative signs in this expression. 
%
%
\begin{ithm}\label{ithm:minuscule}
Let $\lambda = x \varpi_i \in \Lambda$, 
with $x \in W$, be a weight for which $\varpi_i$ is a minuscule fundamental weight, 
and let $u,\,w \in W$, $d = \sum_{j \in I} d_j \alpha_j^{\vee} \in Q^{\vee,+}$, 
with $d_j \geq 0$ for $j \in I$. Then, the following holds in $QK_T(X)${\rm :} 
\begin{equation}
\langle \CO(- \lambda), \CO^{w}, \CO_{u} \rangle_{d} = 
\sum_{ \bm \in \bfR_{w, u, d} } \be^{\ed(\bm_{\beta}) \lambda}
\end{equation}
for a certain explicitly described subset $\bfR_{w, u, d}$ of $\bM_w$. 
\end{ithm}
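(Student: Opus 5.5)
We would follow the strategy of \cite{LNSX}, replacing the divisor-type Chevalley formula by its generalization for $\CO(-\lambda)$ with $\lambda = x\varpi_i$ minuscule. The basic mechanism relates 3-point invariants to quantum products. Let $\Kmet{\cdot}{\cdot}$ be the quantum $K$-metric. Then
\begin{equation*}
\Kmet{\CO(-\lambda)\star \CO^{w}}{\CO_u} = \sum_{d} \sQ^{d}\, \langle \CO(-\lambda), \CO^{w}, \CO_u \rangle_{d}.
\end{equation*}
The left-hand side can be expanded in terms of the structure constants of the product $\CO(-\lambda)\star\CO^{w}$ in the basis $\{\CO^{v}\}$ and the 2-point pairings $\Kmet{\CO^{v}}{\CO_u}$. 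These 2-point pairings are known explicitly: they equal $\sum_{d}\sQ^{d}\langle \CO^{v},\CO_u\rangle_d$, and $\langle \CO^{v},\CO_u\rangle_d$ is $1$ or $0$ according to whether there is a path in $\QBG(W)$ from $v$ to a suitable element with quantum weight bounded by $d$. This uses the connectedness of curve neighborhoods and the identification of $QK_T(X)$ with $K_T(\mathbf{Q}_G)$. Equating coefficients of $\sQ^d$ then gives a finite formula.

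\textbf{Step 1: Chevalley formula.} We first establish the generalized Chevalley formula
\begin{equation*}
\CO(-\lambda)\star\CO^{w} = \sum_{\bm\in\bM_w} (\text{sign})\, \sQ^{\qwt(\bm)}\, \be^{\ed(\bm_\beta)\lambda}\, \CO^{\ed(\bm)}.
\end{equation*}
We obtain it from the corresponding identity for the semi-infinite line bundle $[\CO_{\QG}(-w_\circ \lambda)]$ times a semi-infinite Schubert class, proved through the quantum Lakshmibai--Seshadri path / quantum alcove model. Minuscularity ($\pair{\lambda}{\alpha^\vee}\in\{0,\pm1\}$) is what lets the alcove path split cleanly into a $\beta$-part, coming from roots with $\pair{\lambda}{\alpha^{\vee}}=-1$, and a $\gamma$-part, coming from the remaining roots. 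For $w=e$, Corollary~\ref{cor:gc1} ($\qwt(\bm)=0$ on $\bM_e$) matches this quantum product with the classical Chevalley formula. This serves as a normalization check, or alternatively as the base case if the formula is propagated from $w=e$ via the $QK_T$-module structure and the line bundle relations.

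\textbf{Step 2: substitution.} We insert the Chevalley formula into the metric and use $\langle\CO^{v},\CO_u\rangle_{d'}$. This gives
\begin{equation*}
\langle \CO(-\lambda),\CO^w,\CO_u\rangle_d = \sum_{\bm\in\bM_w} (\pm)\, \be^{\ed(\bm_\beta)\lambda}\, \big[\exists\ \text{path in }\QBG \text{ from } \ed(\bm) \text{ to } u \text{ with } \qwt(\bm)+\qwt \leq d\big],
\end{equation*}
after handling the factor $(1-\sQ)^{-1}$-type denominators in the metric.

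\textbf{Step 3: sign cancellation (the main obstacle).} We expect the hardest part to be showing that the negative terms cancel, so that the final sum is a sum over a subset $\bfR_{w,u,d}\subset\bM_w$ with all coefficients equal to $+1$. The plan is to construct a sign-reversing involution on the signed index set. The involution toggles the last $\gamma$-edge, appending or removing it. It preserves $\ed(\bm_\beta)$ and hence the weight $\be^{\ed(\bm_\beta)\lambda}$. It also preserves the reachability condition, because adding one $\QBG$ edge does not change which elements are reachable within degree $d$, thanks to the shellability and lexicographic properties of label-increasing paths.

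The surviving fixed points should be exactly the paths satisfying an explicit extremality condition, and these define $\bfR_{w,u,d}$. We would first verify the construction in the case $\lambda=\varpi_i$ against Theorem~\ref{ithm:divisor}, which is a useful check on the definition of the involution.
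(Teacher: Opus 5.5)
Your Steps~1 and~2 follow the paper's route (Corollary~\ref{cor:gc2}, then \eqref{eq:3pt-lam2}), with two small corrections. The relevant line bundle is $\LQG{\lng\lambda}$. The case $w=e$ is also more than a check: \eqref{eq:qdiagram} only covers $\varpi_i$, and Corollary~\ref{cor:gc1} (via \eqref{eq:minwt}) is what identifies $\Phi\bigl(\prod_{\pair{\lambda}{\alpha_j^\vee}=-1}(1-\sQ_j)^{-1}\CO(-\lambda)\bigr)$ with $\LQG{\lng\lambda}$.

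The genuine gap is in Step~3, which is the real content of the theorem. Your justification that appending an edge ``does not change which elements are reachable within degree $d$'' is false. Appending an arbitrary $\gamma$-edge to $\bm$ generally changes $\qwt(\bm)+\qwt(\ed(\bm)\Rightarrow u)$. What is actually conserved is the following. Let $\bq_{\bm}$ be the label-decreasing path from $\ed(\bm)$ to $u$ with initial label $\psi$, and let $\varphi$ be the last $\gamma$-label of $\bm$. If $\psi\in\Delta_1^+$ and $\psi\rhd\varphi$, you move that initial edge from $\bq_{\bm}$ to the end of $\bm$. This keeps one path label-increasing and the other label-decreasing (hence shortest), so the total quantum weight is unchanged. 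Removing the last $\gamma$-edge when $\varphi\rhd\psi$ is the inverse. Your involution has to be defined this way, with reference to $u$; otherwise it is not well defined.

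More seriously, even this correct transfer involution leaves fixed points with sign $-1$. Besides the paths with $\ell(\bm_\gamma)=0$ and $\psi\notin\Delta_1^+$, it fixes every $\bm$ with $\varphi=\psi\neq\bzero$; then $\varphi=\psi=\alpha_k$ is a simple root in $\Delta_1^+$, and the sign $(-1)^{\ell(\bm_\gamma)}$ is arbitrary.

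A concrete case is the appendix example ($A_3$, $\lambda=\eps_2$) with $w=u=s_2$ and $d=\alpha_2^\vee$. Here $\bM_{w,u,d}$ consists of the trivial path (sign $+$) and $s_2\Qe{\alpha_2}e$ (sign $-$), both of weight $\be^{s_2\eps_2}$, and both are fixed by the transfer. They cancel only through a second kind of move: append a simple-root edge $\alpha_k\in\Delta_1^+$ (with $\varphi\lhd\alpha_k\rhd\psi$) to $\bm$, and simultaneously prepend its reverse to $\bq_{\bm}$. This raises the total quantum weight by $\alpha_k^\vee$. It is therefore legal only under the slack condition $\pair{\vpi_k}{\qwt(\ed(\bm)\Rightarrow u)}<\pair{\vpi_k}{d-\qwt(\bm)}$; for $d=0$ the trivial path survives alone. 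The failure of this slack condition is exactly the third condition in \eqref{eq:def-bfR}, and your sketch never produces it.

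The paper handles this in two stages. First, it partitions $\bB_{w,u,d}$ into Boolean families $\{\bm\ast\bk\mid\bk\in\bK_{\bm}\}$ with $\bm\in\bC_{w,u,d}$, each summing to zero when $K_{\bm}\neq\emptyset$ (Claims~\ref{c:3pta} and~\ref{c:3ptb}). Only then does it run the transfer involution on $\bC^{\emptyset}_{w,u,d}$, after checking that the involution preserves $K_{\bm}=\emptyset$. Without this two-stage cancellation your fixed-point set is not sign-uniform, so a sign-free formula over an explicit $\bfR_{w,u,d}$ does not follow.
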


For the precise definition of 
the subset $\bfR_{w, u, d}$ of $\bM_w$, see \eqref{eq:def-bfR}.

The proof of Theorem~\ref{ithm:minuscule} utilizes 
the following partial generalization (see Corollary~\ref{cor:gc2}) of the Chevalley formula 
in $QK_T(X)$, obtained in \cite{LNS} (see also \cite{NOS}), 
though we assume an additional condition that 
$\pair{\varpi_i}{\vartheta^{\vee}} = 1$, 
where $\vartheta^{\vee}$ is the highest coroot 
in the coroot system $\Delta^{\vee}$. 
%
%
\begin{ithm}\label{ithm:gchevalley}
Let $\lambda = x \varpi_i \in P$, with $x \in W$, be a weight 
for which $\varpi_i$ is a minuscule fundamental weight, 
and let $w \in W$. Then, the following holds in $QK_T(X)${\rm :} 
\begin{equation}
\CO(- \lambda) \star \CO^w = 
\sum_{\bm \in \bM_{w}} (-1)^{\ell(\bm_{\gamma})} 
\be^{\ed(\bm_{\beta}) \lambda} \BQ^{\qwt(\bm)} \CO^{\ed(\bm)}.
\end{equation} 
\end{ithm}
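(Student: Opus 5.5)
The plan is to reduce to the known Chevalley formula for anti-dominant line bundles. The tool is the $QK_T(X)$-module structure coming from the semi-infinite flag manifold, or the quantum Grothendieck/left Demazure–Lusztig action, which realizes $\CO(-x\varpi_i)$ from $\CO(-\varpi_i)$. Concretely, I would start from the quantum $K$-theoretic Chevalley formula of \cite{LNS} (see also \cite{NOS}) for $\CO(-\varpi_i)\star\CO^w$. For minuscule $\varpi_i$, the quantum Lakshmibai–Seshadri paths of shape $\varpi_i$ are just single elements of $W/W_{J}$. In that case the formula collapses to a sum over directed paths in $\QBG(W)$ starting at $w$ with labels increasing in a fixed reflection order: a $\beta$-part, carrying the weight $\be^{\ed(\bm_\beta)\varpi_i}$, followed by a $\gamma$-part, carrying the sign $(-1)^{\ell(\bm_\gamma)}$ and landing in $\CO^{\ed(\bm)}$. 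This gives the case $x=e$ of Theorem~\ref{ithm:gchevalley} directly, with $\bM_w$ as in \eqref{eq:def-bMw}.

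To pass from $\varpi_i$ to $\lambda=x\varpi_i$, I would argue by induction on $\ell(x)$ along a reduced expression $x=s_{j_1}\cdots s_{j_r}$. The key identity is the $T$-equivariant relation in $K_T(X)$, lifted to $QK_T(X)$, coming from the left $W$-action (the "left Demazure operators" on $K_T(X)$ acting on both the equivariant parameters and the classes). The left action of $s_j$ sends $\CO(-\nu)$ to $\CO(-s_j\nu)$, twists $\be^\mu$ to $\be^{s_j\mu}$, and acts on opposite Schubert classes $\CO^w$ by an explicit two-term formula involving $\CO^{s_jw}$. Since this action commutes with the quantum product (it comes from a $G$-action, or from the automorphisms of the semi-infinite model), applying it to the formula for $s_{j}\lambda$ should yield the formula for $\lambda$.

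On the combinatorial side, one then needs a bijection or sign-reversing involution on paths. It should match $\bM_{s_jw}$ (and $\bM_w$) for the weight $s_j\lambda$ with $\bM_w$ for $\lambda$. It must change the reflection order by the "shift" associated to $s_j$ and transport $\ed(\bm_\beta)$ to $s_j\,\ed(\bm_\beta)$, consistent with $\be^{\ed(\bm_\beta)\lambda}$. The minuscule hypothesis $\pair{\lambda}{\alpha^\vee}\in\{0,\pm1\}$ controls this step. It ensures that each label either lies in the $\beta$-range, the $\gamma$-range, or is irrelevant, so that no longer strings of repeated reflections (which would appear for $\pair{\lambda}{\alpha^\vee}\ge 2$) arise.

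The main obstacle is this combinatorial transport, i.e.\ showing that the path-set identity exactly reproduces the two-term action on $\CO^w$ including quantum edges. In particular, one must verify the case where $s_jw<w$ and the first step is a quantum edge labelled $\alpha_j$, where quantum weights shift by $\alpha_j^\vee$. I would first try the quantum Yang–Baxter moves of Lenart–Lubovsky, which relate path sets for different reflection orders (here, the orders adapted to $\lambda$ versus $s_j\lambda$) while preserving $\ed$, $\qwt$ and the sign statistics. If that fails, I would instead prove the formula directly from the semi-infinite Chevalley formula of \cite{NOS} for arbitrary (non-dominant) weights $x\varpi_i$, specialized using the minuscule condition.
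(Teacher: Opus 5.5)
\textbf{There is a genuine gap: the induction on $\ell(x)$ has no valid mechanism, and your fallback omits the step that actually carries the proof.}

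\textbf{The main route.} It needs an operator that both commutes with $\star$ and sends $\CO(-\nu)$ to $\CO(-s_j\nu)$; no such operator is available. Left translation by a lift of $s_j$ in $N_G(T)$ does give a semilinear ring automorphism of $QK_T(X)$ compatible with $\star$. It twists $\be^{\mu}\mapsto\be^{s_j\mu}$ and moves $\CO^w$ by a two-term formula involving $\CO^{s_jw}$. However, $\CO_X(\nu)=G\times^B\BC_{-\nu}$ is $G$-equivariant, so this automorphism \emph{fixes} every line bundle class $\CO(-\nu)$. On fixed points, $\CO(-\nu)|_v=\be^{v\nu}$, and the left action $f|_v\mapsto s_j(f|_{s_jv})$ leaves this unchanged. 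Applied to the formula for $\CO(-\varpi_i)\star\CO^w$, it only produces another formula for quantum multiplication by $\CO(-\varpi_i)$.

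The operation that does send $\CO(-\nu)$ to $\CO(-s_j\nu)$ is the right $W$-action $f|_v\mapsto f|_{vs_j}$ coming from $G/T$. It is $K_T(\pt)$-linear, is not induced by an automorphism of $X$, and does not commute with quantum products. This already fails in the quantum cohomology of $SL_3/B$, whose Givental--Kim ideal is not $S_3$-stable. Quantum Yang--Baxter moves do not help either: they relate different reduced chains for the \emph{same} weight, not chains for $\lambda$ and $s_j\lambda$. So the ``combinatorial transport'' you single out is not the real obstacle.

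\textbf{The fallback.} Here you point at the right tool. The paper applies Theorem~\ref{thm:genchev} to the reduced $(-\lambda)$-chain $(\beta_a,\dots,\beta_1,-\gamma_1,\dots,-\gamma_b)$. There $\lambda\in H_{\gamma_d,1}$ forces $\wt(A)=-\ed(\bm_\beta)\lambda$, and $\ol{\Par(-\lambda)}$ collapses to the factor $\prod_{\pair{\lambda}{\alpha_j^{\vee}}=-1}(1-\st_j)^{-1}$. But that identity lives in $K_T(\QG)$ and computes $\LQG{\lng\lambda}\otimes\SQG{w}$. The diagram \eqref{eq:qdiagram} only matches $\otimes\LQG{\lng\varpi_k}$ with $\star\,\CO(-\varpi_k)$.

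For non-dominant $\lambda$ you must determine which quantum multiplication corresponds to $\otimes\LQG{\lng\lambda}$. The answer is $\star\prod_j(1-\sQ_j)^{-1}\CO(-\lambda)$, with the product over $j$ such that $\pair{\lambda}{\alpha_j^{\vee}}=-1$, not $\star\,\CO(-\lambda)$. The paper proves this in three steps:
\begin{enumerate}
\item Take $w=e$. Every path in $\bM_e$ is label-increasing, hence shortest from $e$, hence made of Bruhat edges. This gives $\LQG{\lng\lambda}=\Phi\bigl(\prod_j(1-\sQ_j)^{-1}\CO(-\lambda)\bigr)$.
\item The map $\bullet\mapsto\Phi^{-1}(\Phi(\bullet)\otimes\LQG{\lng\lambda})$ commutes with every $\star\,\CO(-\varpi_k)$. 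These classes generate $QK_T(X)$, so the map is quantum multiplication by its value at $1$, as in \cite[Proposition~5.1]{MNS}.
\item Finally, \eqref{eq:push} lets the factor $\prod_j(1-\sQ_j)^{-1}$ cancel the $\prod_j(1-\st_j)^{-1}$ in \eqref{eq:gchev}.
\end{enumerate}
Without this argument, ``specializing'' the semi-infinite formula gives no statement in $QK_T(X)$ at all.
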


As a consequence of Theorem~\ref{ithm:minuscule}, 
we obtain the following result (see Theorem~\ref{thm:3pt-lam2}), which generalizes the first half of 
Theorem~\ref{ithm:divisor} under a little stronger condition 
$\pair{\varpi_i}{\vartheta^{\vee}} = 1$ than 
the condition $\pair{\varpi_i}{\theta^{\vee}} = 1$ 
in Theorem~\ref{ithm:divisor}, where $\vartheta^{\vee} \in \Delta^{\vee}$ is 
the highest coroot and $\theta^{\vee}$ is 
the coroot of the highest root $\theta \in \Delta$. 
%
%
\begin{ithm}\label{ithm:gdivisor}
With the same notation as above, assume that $d_j = 0$ for all $j \in I$ such that $\pair{\lambda}{\alpha_j^{\vee}} \not= 0$, or equivalently, assume that $\pair{\lambda}{d} = 0$. 
Then, the following holds in $QK_T(X)${\rm :}
\begin{equation}
\langle \CO(- \lambda), \CO^{w}, \CO_{u} \rangle_{d} =  
\langle \CO(- \lambda) \cdot \CO^{w}, \CO_{u} \rangle_{d},
\end{equation}
where $\CO(- \lambda) \cdot \CO^{w}$ denotes the ordinary product in $K_{T}(X)$. 
\end{ithm}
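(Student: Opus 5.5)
Theorem~\ref{ithm:gdivisor} will be deduced from Theorem~\ref{ithm:minuscule}, which gives $\langle \CO(-\lambda),\CO^w,\CO_u\rangle_d=\sum_{\bm\in\bfR_{w,u,d}}\be^{\ed(\bm_\beta)\lambda}$. We then expand the right-hand side of the desired identity in Schubert classes and compare the two sums term by term.

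The first step is the ordinary product $\CO(-\lambda)\cdot\CO^w$ in $K_T(X)$. We obtain it from Theorem~\ref{ithm:gchevalley} by specializing $\BQ=0$: the classical part of $\CO(-\lambda)\star\CO^w$ is the ordinary product. This gives
\begin{equation*}
\CO(-\lambda)\cdot\CO^w=\sum_{\bm\in\bM_w,\ \qwt(\bm)=0}(-1)^{\ell(\bm_\gamma)}\be^{\ed(\bm_\beta)\lambda}\CO^{\ed(\bm)}.
\end{equation*}
Hence
\begin{equation*}
\langle \CO(-\lambda)\cdot\CO^w,\CO_u\rangle_d=\sum_{\bm\in\bM_w,\ \qwt(\bm)=0}(-1)^{\ell(\bm_\gamma)}\be^{\ed(\bm_\beta)\lambda}\langle\CO^{\ed(\bm)},\CO_u\rangle_d .
\end{equation*}
We then use the known formula for 2-point KGW invariants of $G/B$: $\langle\CO^v,\CO_u\rangle_d=1$ if there is a directed path in $\QBG(W)$ from $v$ to some $u'\le u$ of weight $\le d$ (equivalently, $v\le u$ in a degree-$d$ quantum Bruhat sense), and $0$ otherwise. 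Substituting this turns the right-hand side into a signed sum over pairs consisting of a path $\bm\in\bM_w$ with no quantum edges and a "degree-$d$ reachability" condition from $\ed(\bm)$ to $u$.

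The second step compares this with the definition \eqref{eq:def-bfR} of $\bfR_{w,u,d}$. Under the hypothesis $\pair{\lambda}{d}=0$, I would show that every $\bm\in\bfR_{w,u,d}$ has $\qwt(\bm)=0$. The idea is that the labels of quantum edges in $\bm$ are roots $\alpha$ with $\pair{\lambda}{\alpha^\vee}$ of a definite sign, hence involve simple coroots $\alpha_j^\vee$ with $\pair{\lambda}{\alpha_j^\vee}\ne0$. The membership condition in $\bfR_{w,u,d}$ forces $\qwt(\bm)\le d$, which is impossible if $d_j=0$ for those $j$. This is in the spirit of the first half of Theorem~\ref{ithm:divisor}.

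The remaining work is a sign-reversing cancellation. The signed sum over $\gamma$-parts combined with the 2-point invariant must collapse to the positive sum over $\bfR_{w,u,d}$. I expect $\bfR_{w,u,d}$ is defined precisely as those paths with, for instance, $\bm_\gamma$ of a special form (empty, or maximal) with respect to reachability of $u$ in degree $d$. So I would pair paths differing by one final $\gamma$-edge, toggled via the standard shellability/interval-Möbius argument used in \cite{LNSX}, and check that the pairing preserves $\ed(\bm_\beta)$, hence the weight $\be^{\ed(\bm_\beta)\lambda}$.

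The main obstacle is the first part of the second step: controlling quantum edges in the $\beta$- and $\gamma$-parts via $\pair{\lambda}{d}=0$. I would first try to prove that a quantum edge with label $\alpha$ in $\bm\in\bM_w$ satisfies $\pair{\lambda}{\alpha^\vee}\neq0$. Since $\pair{\lambda}{\cdot}$ takes values in $\{0,\pm1\}$ on coroots and is additive in the $\alpha_j^\vee$, such an $\alpha^\vee$ must have positive coefficient on some $\alpha_j^\vee$ with $\pair{\lambda}{\alpha_j^\vee}\ne0$.
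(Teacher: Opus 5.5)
\textbf{The gap is in your third step.} Your first two steps are sound. The observation in your second step is exactly the key point of the paper's proof: every quantum edge of a path in $\bM_w$ contributes a coroot with positive coefficient on some $\alpha_j^\vee$ with $\pair{\lambda}{\alpha_j^\vee}\neq 0$, so $\qwt(\bm)\le d$ together with $d_j=0$ for those $j$ forces $\qwt(\bm)=0$. But the third step, where the argument has to close, is not carried out. You guess at the definition of $\bfR_{w,u,d}$ and appeal to an unspecified ``shellability/interval-M\"obius'' pairing, so the identity your proof needs,
\[
\sum_{\bm\in\bfR_{w,u,d}}\be^{\ed(\bm_\beta)\lambda}
=\sum_{\substack{\bm\in\bM_w\\ \qwt(\bm)=0}}(-1)^{\ell(\bm_\gamma)}\be^{\ed(\bm_\beta)\lambda}\,\twp{\ed(\bm)}{u}_d ,
\]
is never established. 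It is true, but proving it along your route means redoing the sign-reversing involution from the proof of Theorem~\ref{thm:3pt-lam}. Under your hypothesis every $K_{\bm}$ is empty, so only the toggle $\Psi$ between the last $\gamma$-edge of $\bm$ and the first edge of $\bq_{\bm}$ is needed. Its well-definedness still has to be checked case by case.

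\textbf{The detour through $\bfR_{w,u,d}$ is unnecessary.} Apply to the left-hand side the same ingredients you already use on the right: Theorem~\ref{ithm:gchevalley}, the defining relation \eqref{eq:qmulti}, and the 2-point formula $\twp{v}{u}_{\xi}=1$ iff $\xi\ge\qwt(v\Rightarrow u)$. Extracting the coefficient of $\sQ^d$ gives the signed expansion \eqref{eq:3pt-lam2}, a sum over $\bm\in\bM_w$ with $\qwt(\ed(\bm)\Rightarrow u)\le d-\qwt(\bm)$. Your step-two argument applies to every such $\bm$, not only to those in $\bfR_{w,u,d}$. It gives $\qwt(\bm)=0$, so the condition becomes $\twp{\ed(\bm)}{u}_d=1$, and the two signed sums agree term by term with no cancellation. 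This is the paper's proof, phrased via generating functions. Modulo the ideal generated by the $\sQ_j$ with $\pair{\lambda}{\alpha_j^\vee}\ne0$, the series $\sum_{\zeta}\sS^{v,\zeta}_{\lambda,w}\sQ^{\zeta}$ reduces to $\sC^{v}_{\lambda,w}$. Hence only $\zeta=0$ contributes to the coefficient of $\sQ^d$.

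Two smaller remarks:
\begin{enumerate}
\item What you call the main obstacle is immediate from the construction. Every label of a path in $\bM_w$ lies in $\Delta^+_{-1}\sqcup\Delta^+_1$ by \eqref{eq:D-1} and \eqref{eq:D+1}, so $\pair{\lambda}{\alpha^\vee}=\pm1$ for every edge, Bruhat or quantum.
\item Keep the hypothesis in the componentwise form $d_j=0$, as you did. For non-dominant $\lambda$ the condition $\pair{\lambda}{d}=0$ is strictly weaker, and the identity fails under it. In the example of Appendix~\ref{sec:example}, take $\lambda=\eps_2$ in type $A_3$, $w=s_2$, $u=e$, $d=\alpha_1^\vee+\alpha_2^\vee$, so that $\pair{\lambda}{d}=0$. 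The left-hand side is $0$, while the right-hand side equals $\be^{s_2\eps_2}$.
\end{enumerate}
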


This paper is organized as follows. 
In Section~\ref{sec:prelim}, 
we fix the notation for root systems, recall the definition of 
the quantum Bruhat graph, and also explain basic facts about the equivariant $K$-group of 
semi-infinite flag manifolds and quantum $K$-theory of ordinary flag manifolds. 
In addition, we review the general Chevalley formula in the equivariant $K$-group 
of semi-infinite flag manifolds, which plays an important role 
in this paper. In Section~\ref{sec:main}, 
we we state and prove our main results given above. 
In Appendix~\ref{sec:example}, we give some examples of Theorems~\ref{ithm:minuscule} and \ref{ithm:gchevalley}. 

\medskip
\paragraph{\bf Acknowledgments.}
We would like to express our sincere thanks to 
Cristian Lenart for his valuable collaboration on related projects.
S.N. was partly supported by JSPS Grant-in-Aid for Scientific Research (C) 26K06759. 
D.S. was partly supported by JSPS Grant-in-Aid for Scientific Research (C) 23K03045.

%
\section{Preliminaries.}
\label{sec:prelim}
%
%
\subsection{Notation for root systems.} \label{subsec:alggrp}
Let $G$ be a connected, simply-connected, simple (linear) algebraic group over $\BC$, 
with $T$ a maximal torus of $G$, 
$B=TN$ a Borel subgroup of $G$, $N$ the unipotent radical of $B$.
Let $B^{-} \subset G$ be the opposite Borel subgroup to $B$, i.e., 
the unique Borel subgroup such that $B \cap B^- = T$. 
We set $\Fg := \Lie(G)$ and $\Ft := \Lie(T)$; 
$\Fg$ is a finite-dimensional simple Lie algebra over $\BC$, and 
$\Ft$ is a Cartan subalgebra of $\Fg$. 
We denote by $\pair{\cdot\,}{\cdot} : \Ft^{\ast} \times \Ft \rightarrow \BC$ 
the canonical pairing, where $\Ft^{\ast} \coloneqq   \Hom_{\BC}(\Ft, \BC)$. 
Let $\Delta \subset \Ft^{\ast}$ be the root system of $\Fg$, 
$\Delta^{+} \subset \Delta$ the set of positive roots, 
and $\{ \alpha_{j} \}_{j \in I} \subset \Delta^{+}$ the set of simple roots; 
we denote by $\theta \in \Delta^+$ the highest root of $\Delta$, 
and set $\rho := (1/2) \sum_{\alpha \in \Delta^{+}} \alpha$. 
The root lattice $Q$ and the coroot lattice $Q^{\vee}$ of $\Fg$ are 
$Q := \sum_{j \in I} \BZ \alpha_{j}$ and $Q^{\vee} := 
\sum_{j \in I} \BZ \alpha_{j}^{\vee}$, respectively. 
We set $Q^{\vee,+} \coloneqq  \sum_{j \in I} \BZ_{\ge 0} \alpha_{j}^{\vee}$; 
for $\xi,\zeta \in Q^{\vee}$, we write $\xi \ge \zeta$ if $\xi-\zeta \in Q^{\vee,+}$. 
For $i \in I$, the weight $\vpi_{i} \in \Ft^{\ast}$ 
given by $\pair{\vpi_{i}}{\alpha_{j}^{\vee}} = \delta_{i,j}$ for all $j \in I$, 
with $\delta_{i,j}$ the Kronecker delta, 
is called the $i$-th fundamental weight for $G$. 
Then the (integral) weight lattice $\Lambda$ of $G$ is 
$\Lambda \coloneqq \sum_{i \in I} \BZ \vpi_{i}$; 
we set $\Lambda^{+} := \sum_{i \in I} \BZ_{\geq 0} \varpi_i$ and 
$\Lambda^{++} := \sum_{i \in I} \BZ_{\geq 1} \varpi_i$. 
Also, we denote by $\BZ[\Lambda]$ the group algebra of $\Lambda$, that is, 
the associative algebra generated by formal elements $\be^{\nu}$, $\nu \in \Lambda$, 
where the product is defined by $\be^{\mu} \be^{\nu}:=
\be^{\mu + \nu}$ for $\mu, \nu \in \Lambda$. 

A reflection $s_{\alpha} \in GL(\Ft^{\ast})$, $\alpha \in \Delta$, 
is defined by: $s_{\alpha} \mu = \mu - \pair{\mu}{\alpha^{\vee}} \alpha$ 
for $\mu \in \Ft^{\ast}$, where $\alpha^{\vee} \in \Ft$ is the coroot of $\alpha \in \Delta$; 
we write $s_{j} \coloneqq   s_{\alpha_{j}}$ for $j \in I$. 
Then the (finite) Weyl group $W$ of $\Fg$ is defined to be the subgroup of $GL(\Ft^{\ast})$ 
generated by $\{ s_{j} \}_{j \in I}$, that is, $W \coloneqq   \langle s_{j} \mid j \in I \rangle$. 
For $w \in W$, there exist $j_{1}, \ldots, j_{r} \in I$ such that $w = s_{j_{1}} \cdots s_{j_{r}}$. 
If $r$ is minimal, then the product $s_{j_{1}} \cdots s_{j_{r}}$ is called a reduced expression for $w$, 
and $r$ is called the length of $w$; we denote by $\ell(w)$ the length of $w$. 
Denote by $\lng$ the longest element of $W$. 
Also, let $W_{\af} \cong W \ltimes Q^{\vee}$ be the affine Weyl group, 
where $t_{\xi} \in W_{\af}$ denotes the translation element for $\xi \in Q^{\vee}$; 
we set $W_{\af}^{\geq 0} := 
\bigl\{w t_{\xi} \in W_{\af} \mid w \in W,\,\xi \in Q^{\vee,+} \bigr\}$. 

%
\subsection{The quantum Bruhat graph.}
\label{subsec:QBG}

\begin{dfn} \label{dfn:QBG} 
The quantum Bruhat graph on $W$, 
denoted by $\QBG(W)$, is the $\Delta^{+}$-labeled
directed graph whose vertices are the elements of $W$ and 
whose edges are of the following form: 
$v \edge{\alpha} w$, with $v, w \in W$ and $\alpha \in \Delta^{+}$, 
such that $w = v s_{\alpha}$ and either of the following holds: 
(B) $\ell(w) = \ell (v) + 1$; 
(Q) $\ell(w) = \ell (v) + 1 - 2 \pair{\rho}{\alpha^{\vee}}$.
An edge satisfying (B) (resp., (Q)) is called a Bruhat edge (resp., quantum edge). 
\end{dfn}

Let 
\begin{equation} \label{eq:dp}
\bp: w_{0} \edge{\beta_{1}} w_{1} \edge{\beta_{2}} \cdots \edge{\beta_{r}} w_{r}
\end{equation}
be a directed path in the quantum Bruhat graph $\QBG(W)$. 
We set $\ed(\bp)\coloneqq  w_{r}$ and $\ell(\bp)=r$. 
A directed path $\bp$ is called the trivial (resp., non-trivial) one 
if $\ell(\bp)=0$ (resp., $\ell(\bp) > 0$). 
For $\bp$ of the form \eqref{eq:dp}, we set
\begin{equation} \label{eq:def-qwt}
\qwt(\bp) := \sum_{ \begin{subarray}{c} 1 \le k \le r \\ 
\text{$w_{k-1} \edge{\beta_k} w_{k}$ is} \\
\text{a quantum edge} \end{subarray} } \beta_{k}^{\vee} \in Q^{\vee,+}. 
\end{equation}
Let $v,w \in W$, and let $\bp$ be a shortest directed path from $v$ to $w$ in $\QBG(W)$. 
We set $\qwt(v \Rightarrow w)\coloneqq  \qwt(\bp)$; 
we know from \cite[Lemma~1]{Pos} (see also \cite[Proposition~8.1]{LNSSS1}) that 
$\qwt(v \Rightarrow w)$ does not depend on the choice of a shortest directed path $\bp$. 

Let $\lhd$ be a reflection (or convex) order on $\Delta^{+}$; 
see, e.g., \cite[\S 2.2]{KoNS}.
A directed path $\bp$ of the form \eqref{eq:dp} is said to be label-increasing 
(resp., label-decreasing) with respect to $\lhd$ 
if $\beta_{1} \lhd \cdots \lhd \beta_{r}$ (resp., $\beta_{1} \rhd \cdots \rhd \beta_{r}$). 
%
%
\begin{thm}[{see, for example, \cite[Theorem~7.4]{LNSSS1}}] \label{thm:label}
Let $v,w \in W$.
\begin{enu}
\item There exists a unique label-increasing directed path from $v$ to $w$ 
in the quantum Bruhat graph $\QBG(W)$. Moreover, it is a shortest directed path 
from $v$ to $w$. 

\item There exists a unique label-decreasing directed path from $v$ to $w$ 
in the quantum Bruhat graph $\QBG(W)$. Moreover, it is a shortest directed path 
from $v$ to $w$. 
\end{enu}
\end{thm}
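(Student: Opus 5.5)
This theorem is quoted from \cite[Theorem~7.4]{LNSSS1}, so I would reproduce the standard argument rather than build a new one. It has three ingredients: shellability of the quantum Bruhat graph via reflection orders; the lift of $\QBG(W)$ to the level-zero (or semi-infinite) Bruhat order on $W_{\af}$; and a counting argument.

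First, existence. I would argue by induction on the distance from $v$ to $w$ in $\QBG(W)$. Since $w=v\cdot(v^{-1}w)$ and reflections generate $W$, every pair $v,w$ is joined by some directed path, because each reflection step $v \to v s_{\alpha}$ can be realised by a chain of quantum and Bruhat edges. Then take any shortest path and convert it to a label-increasing one by a sequence of local ``straightening'' moves. Each length-two subpath $x \edge{\alpha} y \edge{\beta} z$ with $\alpha \rhd \beta$ is replaced by a subpath with labels in increasing order. These moves are the QBG analogue of the dihedral (rank-two) relations used in Dyer's shellability of Bruhat intervals. The key input is that a reflection order restricts to each rank-two subsystem as one of its two natural orders. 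This lets one check the exchange in each rank-two root subsystem $\Delta \cap (\BZ\alpha+\BZ\beta)$ by a finite case check of types $A_1\times A_1$, $A_2$, $B_2$, $G_2$. Each move preserves length and endpoints, and also $\qwt$ (cf.\ \cite[Lemma~1]{Pos}). Lexicographic descent of the label sequence guarantees termination, so a label-increasing shortest path exists.

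Second, uniqueness, and the fact that every label-increasing path is shortest. I would use the Peterson/Lam--Shimozono correspondence, in which paths in $\QBG(W)$ from $v$ correspond to saturated chains in the semi-infinite Bruhat order starting at $v t_{\xi}$ for suitably dominant $\xi$. Under this correspondence, a label-increasing path corresponds to a chain whose labels are increasing in an induced reflection order on affine positive roots. Dyer's theorem says that in a Bruhat interval of a Coxeter group, for a reflection order, there is exactly one label-increasing maximal chain. Applying this to the interval $[v t_{\xi}, w t_{\xi+\qwt(v\Rightarrow w)}]$ in the (semi-infinite) affine Bruhat order gives uniqueness. It also shows that the increasing chain is maximal in that interval, which translates back to a shortest path.

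Part (2) follows from part (1) by applying it to the reverse order $\rhd$, which is again a reflection order.

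The main obstacle is the second step. I must verify that a label-increasing path in $\QBG(W)$ of arbitrary length really lifts to a chain lying in a single Bruhat interval of the affine Weyl group, with increasing labels for a genuine affine reflection order. Without this, Dyer's uniqueness cannot be applied, and it is a priori unclear why a longer, non-shortest path could not also be label-increasing. My first attempt would be to fix $\xi$ very dominant (so that $\pair{\alpha}{\xi}$ is large for all $\alpha\in\Delta^+$). I would then define the affine order by comparing the finite parts via $\lhd$ together with the translation parts, and check that quantum edges $x \edge{\alpha} x s_{\alpha}$ lift to covers $x t_{\xi}\to x s_{\alpha} t_{\xi+\alpha^{\vee}}$ in the affine Bruhat order, as in \cite{LNSSS1}.
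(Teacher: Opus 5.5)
The paper gives no proof of this statement; it quotes it from \cite[Theorem~7.4]{LNSSS1}, and for $W$ itself the result goes back to Brenti--Fomin--Postnikov. So your proposal has to stand on its own. It has a genuine gap in the uniqueness part, in addition to the unverified lift that you flag yourself.

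\textbf{Uniqueness and minimality.} Dyer's theorem concerns maximal chains in one fixed Bruhat interval. The lift of a path $\bp$ from $v$ to $w$ is a maximal chain of $[v t_{\xi}, w t_{\xi+\qwt(\bp)}]$, and the top element of this interval depends on $\qwt(\bp)$. Since $\ell(\bp) = \ell(w) - \ell(v) + 2\pair{\rho}{\qwt(\bp)}$, a label-increasing path that is not shortest has $\qwt(\bp) \ne \qwt(v \Rightarrow w)$. Its lift therefore lies in a different interval from the lift of the shortest label-increasing path produced by your existence step, and Dyer's theorem does not compare the two.

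Even granting the lift, you only obtain the following: for each $\eta \in Q^{\vee,+}$ there is at most one label-increasing path from $v$ to $w$ with quantum weight $\eta$. The sentence ``it also shows that the increasing chain is maximal in that interval, which translates back to a shortest path'' is a non sequitur. Every lifted path is a maximal chain of its own interval, shortest or not. Choosing the interval $[v t_{\xi}, w t_{\xi+\qwt(v \Rightarrow w)}]$ presupposes exactly the ``moreover'' assertion you are trying to prove.

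What is missing is an independent argument that a label-increasing path is automatically shortest. One option is a count showing that exactly $|W|$ label-increasing paths start at $v$; together with your existence step, this would give uniqueness and minimality at once. Note also that the affine order cannot be a naive lexicographic combination of $\lhd$ with the translation parts. Such orders (finite part first, or translation first) violate the convexity condition on positive affine roots, so Dyer's theorem would not apply to them.

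\textbf{Existence.} Your straightening argument is sound in outline. Termination by lexicographic descent is fine, and part (2) does follow from part (1) applied to the reversed order, which is again a reflection order. However, the rank-two exchange is not a finite check inside the dihedral group $W'$ generated by $s_{\alpha}$ and $s_{\beta}$. The restriction of $\QBG(W)$ to a coset $xW'$ is in general not $\QBG(W')$, because Bruhat and quantum edges are defined through the length function and $\pair{\rho}{\alpha^{\vee}}$ of the ambient group $W$.

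For instance, in type $A_{3}$ take $W' = \langle s_{\alpha_1+\alpha_2}, s_{\alpha_3} \rangle$, which is of type $A_{2}$. The edge $e \to s_{\alpha_1+\alpha_2}$ lies in $\QBG(W')$ but not in $\QBG(W)$, since $\ell(s_{\alpha_1+\alpha_2}) = 3$. Handling these restricted, ``mixed'' graphs is the real content of the rank-two step; this is where the mixed Bruhat operators of Brenti--Fomin--Postnikov enter. This step therefore needs an actual argument, not a case check over $A_1 \times A_1$, $A_2$, $B_2$, $G_2$.
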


%
\subsection{Equivariant $K$-groups of semi-infinite flag manifolds.}
\label{subsec:equivgroup}

First, let us briefly recall from \cite[\S2.3]{Kat1} 
the definitions of semi-infinite flag manifolds and semi-infinite Schubert varieties. 
Let $\QGr$ denote the semi-infinite flag manifold associated to $G$, 
which is a (reduced) ind-scheme of ind-infinite type whose set of 
$\BC$-valued points is $G(\BC\pra{z})/(T(\BC) \cdot N(\BC\pra{z}))$ 
(see also \cite{KaNS, Kat2}); note that $\QGr$ can be thought of 
as an inductive limit of copies of the (reduced) closed subscheme 
$\QG \subset \prod_{i \in I} \BP(L(\varpi_{i}) \otimes_{\BC} \BC\bra{z})$ 
of infinite type, introduced in \cite[\S4.1]{FM}, 
where $L(\varpi_{i})$ is the irreducible highest weight $G$-module 
of highest weight $\varpi_{i}$. 
One has the semi-infinite Schubert (sub)variety 
$\QG(y) \subset \QGr$ associated to each element $y$ of 
the affine Weyl group $W_{\af} \cong W \ltimes Q^{\vee}$; 
note that $\QG(y)$ is, by definition, the closure of 
the orbit under the Iwahori subgroup $\bI \subset G(\BC\bra{z})$, 
which is the pre-image of $B$ under the evaluation map $G(\BC\bra{z}) \rightarrow G$ at $z = 0$, 
through the ($T \times \BC^{*}$)-fixed point labeled by 
$y \in W_{\af}$ in exactly the same way as in 
\cite[\S4.2]{KaNS} and \cite[\S2.3]{Orr}. 
Also, note that $\QG(y)$ is contained in 
$\QG(e) = \QG$ for all $y \in W_{\af}^{\geq 0} = 
\bigl\{w t_{\xi} \in W_{\af} \mid w \in W,\,\xi \in Q^{\vee,+} \bigr\}$; 
we also call $\QG$ the semi-infinite flag manifold. 
For each weight $\nu = \sum_{i \in I} m_{i} \varpi_{i} \in P$ 
with $m_{i} \in \BZ$, one has a $(G(\BC\bra{z}) \rtimes \BC^{*})$-equivariant 
line bundle $\CO_{\QG}(\nu)$ over $\QG$, which is given by the restriction of 
the line bundle $\boxtimes_{i \in I} \CO(m_{i})$ over 
$\prod_{i \in I} \BP(L(\varpi_{i}) \otimes_{\BC} \BC\bra{z})$; 
we warn the reader that the convention for these line bundles is 
the same as that of \cite{KaNS}, but differs from that of \cite{Kat2} 
by the twist coming from the involution $- \lng$. 

Next, mainly following \cite[\S1.5]{Kat2}, 
we define some variants of equivariant $K$-group of 
the semi-infinite flag manifold $\QG$. 
The equivariant $K$-group $\ti{K}^{\prime}(\QG)$ of $\QG$ is, 
by definition, the $\BZ[q, q^{-1}][\Lambda]$-module consisting of 
all those formal infinite linear combinations 
$\sum_{y \in W_{\af}^{\geq 0}} a_{y} \, [\CO_{\QG(y)}]$ of 
the (semi-infinite Schubert) classes $[\CO_{\QG(y)}]$, $y \in W_{\af}$, 
of the structure sheaves $\CO_{\QG(y)}$ of $\QG(y)$ 
with coefficients $a_{y} \in \BZ[q, q^{-1}][\Lambda]$ for 
which the following holds for all regular dominant weights 
$\lambda \in \Lambda^{++}$: 
\begin{equation*}
\sum_{y \in W_{\af}^{\geq 0}} |a_{y}| \, 
\gch H^{0}(\QG, \CO_{\QG(y)} \otimes \CO_{\QG}(\lambda)) \in \BZ[\Lambda]\pra{q^{-1}}, 
\end{equation*}
where $\gch$ denotes the character of an $(T \times \BC^{*})$-diagonalizable module 
(see \cite[Corollary~4.31]{KaNS} for more details), 
and the absolute values $|a_{y}| \in \BZ[q, q^{-1}][\Lambda]$ 
for $y \in W_{\af}^{\geq 0}$ are taken coefficientwise; 
note that $\BZ[\Lambda] = \bigoplus_{\nu \in \Lambda} \BZ\be^{\nu} \cong R(T)$, 
and $q \in R(\BC^{*})$ corresponds to the loop rotation action of $\BC^{*}$. 
Then, we know from \cite[Theorem~1.25]{Kat2} that $\ti{K}^{\prime}(\QG)$ is 
stable under the tensor product $\bullet \otimes [\CO_{\QG}(\nu)]$ 
for all $\nu \in \Lambda$, where $\bullet$ is an arbitrary element of 
$\ti{K}^{\prime}(\QG)$. In particular, we have 
$[\CO_{\QG(y)}(\nu)] := [\CO_{\QG(y)}] \otimes [\CO_{\QG}(\nu)] \in \ti{K}^{\prime}(\QG)$ 
for all $y \in W_{\af}^{\geq 0}$ and $\nu \in \Lambda^{+}$; 
note that $[\CO_{\QG}(\nu)] = [\CO_{\QG(e)}] \otimes [\CO_{\QG}(\nu)] 
\in \ti{K}^{\prime}(\QG)$ for all $\nu \in \Lambda$. 
Also, we define $K_{T \times \BC^{*}}(\QG)$ to be the $\BZ[q, q^{-1}][\Lambda]$-submodule of 
$\ti{K}^{\prime}(\QG)$ consisting of 
all those convergent (in the sense of \cite[Proposition~5.11]{KaNS}) 
formal infinite linear combinations of the semi-infinite Schubert classes $[\CO_{\QG(y)}]$, 
$y \in W_{\af}^{\geq 0}$, with coefficients $a_{y} \in \BZ[q, q^{-1}][\Lambda]$; 
here convergence holds if the sum $\sum_{y \in W_{\af}^{\geq 0}} \vert a_{y} \vert$ 
of the absolute values $|a_{y}|$ lies in $\BZ[\Lambda]\pra{ q^{-1} }$. 
It is easy to deduce from \cite[Corollary~4.31]{KaNS} that 
$K_{T \times \BC^{*}}(\QG)$ is indeed a $\BZ[q, q^{-1}][\Lambda]$-submodule of 
$\ti{K}^{\prime}(\QG)$. In addition, it follows from \cite[Theorem~5.16]{KoLN} 
and (the proof of) \cite[Corollary~5.12]{KaNS} that 
the twisted semi-infinite Schubert class 
$[\CO_{\QG(y)}(\nu)] := [\CO_{\QG(y)}] \otimes [\CO_{\QG}(\nu)]$ 
lies in $K_{T \times \BC^{\ast}}(\QG) \subset \widetilde{K}^{\prime}(\QG)$ 
for all $y \in W_{\af}^{\geq 0}$ and $\nu \in \Lambda$. 

Now, following \cite[\S 1.5]{Kat2}, 
we define the $T$-equivariant $K$-group $K_{T}(\QG)$ of $\QG$ 
to be the specialization (of coefficients) at $q^{\pm 1} = 1$ of 
$\ti{K}^{\prime}(\QG)$ (or equivalently, $K_{T \times \BC^{\ast}}(\QG)$). 
Then, it turns out (cf. \cite[Lemma~1.22]{Kat2}) 
that $K_{T}(\QG)$ is just the $\BZ[\Lambda]$-module 
$\prod_{y \in W_{\af}^{\geq 0}} \BZ[\Lambda][\CO_{\QG(y)}]$ (direct product), 
which consists of all infinite linear combinations of 
the semi-infinite Schubert classes $[\CO_{\QG(y)}]$, $y \in W_{\af}^{\geq 0}$, 
with coefficients in $\BZ[\Lambda]$. 
Note that the semi-infinite Schubert classes $[\CO_{\QG(y)}]$, $y \in W_{\af}^{\geq 0}$, 
form a topological basis of $K_{T}(\QG)$; 
that is, an arbitrary element of $K_{T}(\QG)$ can be uniquely written 
as an infinite linear combination of the $[\CO_{\QG(y)}]$, $y \in W_{\af}^{\geq 0}$, 
with coefficients in $\BZ[\Lambda]$. Also, for each $\nu \in \Lambda$, 
a $\BZ[\Lambda]$-linear endomorphism $\bullet \otimes [\CO_{\QG}(\nu)]$ of $K_{T}(\QG)$ 
is induced from the $\BZ[q, q^{-1}][\Lambda]$-linear endomorphism 
$\bullet \otimes [\CO_{\QG}(\nu)]$ of $K_{T \times \BC^{\ast}}(\QG)$ 
by the specialization at $q^{\pm 1} = 1$ (see \cite[Theorem~1.26]{Kat2}); 
note that $[\CO_{\QG}(\nu)] = [\CO_{\QG(e)}] \otimes [\CO_{\QG}(\nu)] \in K_{T}(\QG)$ 
for $\nu \in \Lambda$. 
In addition, for $\xi \in Q^{\vee,+}$, a $\BZ[\Lambda]$-linear endomorphism 
$\st_{\xi}$ of $K_{T}(\QG)$, given by $\st_{\xi}[\CO_{\QG(y)}] := 
[\CO_{\QG(y t_{\xi})}]$ for $y \in W_{\af}^{\geq 0}$, 
is induced by the right action of $Q^{\vee}$ on $\QGr$ 
(see \cite[Eq.~(1.20)]{Kat2}). We know from \cite[Theorem~1.26]{Kat2} that 
\begin{equation}\label{eq:shift}
(\st_{\xi}[\CO_{\QG(y)}]) \otimes [\CO_{\QG}(\nu)] = 
\st_{\xi}([\CO_{\QG(y)}] \otimes [\CO_{\QG}(\nu)])
\end{equation}
for $y \in W_{\af}^{\geq 0}$ and $\nu \in \Lambda$. 
Since the semi-infinite Schubert classes $[\CO_{\QG(y)}]$, $y \in W_{\af}^{\geq 0}$, 
form a topological basis of $K_{T}(\QG)$ over $\BZ[\Lambda]$ in the sense above, 
it follows that
\begin{equation}
(\st_{\xi} \, \bullet) \otimes [\CO_{\QG}(\nu)] = 
\st_{\xi}(\bullet \otimes [\CO_{\QG}(\nu)])
\end{equation}
for an arbitrary element $\bullet \in K_{T}(\QG)$ and 
$\xi \in Q^{\vee,+}$, $\nu \in \Lambda$.

%
\subsection{General Chevalley formula.}
\label{subsec:genChev}

We review the general Chevalley formula for 
$K_{T}(\QG)$, following \cite{LNS} (cf. \cite[Theorem~5.16]{KoLN}). 
Let $\Ft^{\ast}_{\BR} :=\BR \otimes_{\BZ} \Lambda$ be a real form of $\Ft^{\ast}$, 
and set 
\begin{equation*}
	H_{\beta,l}:=\bigl\{ \mu \in \Ft^{\ast}_{\BR} \mid \pair{\mu}{\beta^\vee} =l\bigr\} 
\end{equation*}
for $\beta \in \Delta$ and $l \in \BZ$; we denote by $s_{\beta,l}$ the affine reflection in the affine hyperplane $H_{\beta,l}$.
The affine hyperplanes $H_{\beta,l}$, $\beta \in \Delta$, $l \in \BZ$, divide 
the real vector space $\Ft^{\ast}_{\BR}$ into open regions, called alcoves; 
the fundamental alcove is defined as
\begin{equation*}
	A_{\circ} := \bigl\{ \mu \in \Ft_{\BR}^{\ast} \mid 
	0 < \pair{\mu}{\alpha^\vee} < 1 \quad \text{for all $\alpha \in \Delta^{+}$} \bigr\}.
\end{equation*}
We say that two alcoves are adjacent if they are distinct and have a common wall. Given a pair
of adjacent alcoves $A$ and $B$, we write $A \edge{\beta} B$  for $\beta \in \Delta$ if the
common wall is orthogonal to $\beta$ and $\beta$ points in the direction from $A$ to $B$.

\begin{dfn}[\cite{LP}]
	An alcove path is a sequence of alcoves $(A_0, A_1, \ldots, A_m)$ such that
	$A_{j-1}$ and $A_j$ are adjacent for $j=1,\ldots,m$. 
	We say that $(A_0, A_1, \ldots, A_m)$ is reduced 
	if it has minimal length among all alcove paths from $A_0$ to $A_m$.
\end{dfn}

Let $\lambda \in \Lambda$, and let $A_{\lambda}=A_{\circ}+\lambda$ be 
the translation of the fundamental alcove $A_{\circ}$ by $\lambda$. 
	
\begin{dfn}[\cite{LP}] \label{dfn:lch}
    Let $\lambda \in \Lambda$. 
	A sequence $(\beta_1, \beta_2, \dots, \beta_m)$ of roots is called 
	a reduced $\lambda$-chain (of roots) if 
	\begin{equation}
		A_{\circ}=A_{0} \edge{-\beta_1}  A_1
		\edge{-\beta_2} \cdots 
		\edge{-\beta_m}  A_m=A_{-\lambda}
	\end{equation}
is a reduced alcove path.
\end{dfn}

A reduced alcove path $(A_0=A_{\circ},A_1,\ldots,A_m=A_{-\lambda})$ can be identified 
with the corresponding total order on the hyperplanes, to be called $\lambda$-hyperplanes, 
which separate $A_\circ$ from $A_{-\lambda}$. This total order is given by the sequence 
$H_{\beta_j,-l_j}$ for $j=1,\ldots,m$, where $H_{\beta_j,-l_j}$ contains the common wall of 
$A_{j-1}$ and $A_j$. Note that $\pair{\lambda}{\beta_j^\vee} \ge 0$, and 
that the integers $l_j$, called heights, have the following ranges:
\begin{equation*}
\begin{split}
& 0 \le l_j \le \pair{\lambda}{\beta_j^\vee}-1 \quad \text{if} \quad \beta_j \in \Delta^+, \\
& 1 \le l_j \le \pair{\lambda}{\beta_j^\vee} \quad \text{if} \quad \beta_j \in \Delta^- = - \Delta^+. 
\end{split}
\end{equation*}
Note also that a reduced $\lambda$-chain $(\beta_1, \ldots, \beta_m)$ determines 
the corresponding reduced alcove path, so we can identify them as well. 

Let $\lambda \in \Lambda$, and fix a reduced $\lambda$-chain, 
which we denote by $\Gamma(\lambda)=(\beta_1,\,\ldots,\,\beta_m)$. 
Let $w \in W$. 

\begin{dfn}[\cite{LL}] \label{dfn:admissible}
	A subset 
	$A=\left\{ j_1 < j_2 < \cdots < j_s \right\}$ of $[m]=\{1,\ldots,m\}$ (possibly empty)
 	is called a $w$-admissible subset if
	the following directed path:
	\begin{equation} \label{eqn:admissible}
	\begin{split}
	& \Pi(w,A): w \edge{|\beta_{j_1}|} w s_{\beta_{j_1}}
	\edge{|\beta_{j_2}|}  ws_{\beta_{j_1}}s_{\beta_{j_2}}
	\edge{|\beta_{j_3}|}  \cdots \\
	& \hspace{30mm} \cdots 
	\edge{|\beta_{j_s}|}  ws_{\beta_{j_1}}s_{\beta_{j_2}} \cdots s_{\beta_{j_s}}=:\ed(A) 
	\end{split}
	\end{equation}
	lies in the quantum Bruhat graph $\QBG(W)$. 
 	Let $\CA(w,\Gamma(\lambda))$ denote the collection of all $w$-admissible subsets of $[m] = \{1, \ldots, m\}$.
\end{dfn}

Let $A=\{ j_1 < \cdots < j_s\} \in \CA(w,\Gamma(\lambda))$. 
The weight of $A$ is defined by 
	\begin{equation} \label{eq:wta}
	\wt(A):=-w s_{\beta_{j_1},-l_{j_1}} \cdots s_{\beta_{j_s},-l_{j_s}}(-\lambda).
	\end{equation}
Also, we set 
\begin{equation} \label{eq:nA}
n(A):=\# \{ \beta_{j_1},\ldots,\beta_{j_s} \} \cap \Delta^{-}, 
\end{equation}
\begin{equation} \label{eq:A-}
A^{-}:=\left\{j_i \in A \ \biggm| \ 
\begin{array}{l}
\text{%
$ws_{\beta_{j_1}} \cdots s_{\beta_{j_{i-1}}} \edge{|\beta_i|}
 ws_{\beta_{j_1}} \cdots s_{\beta_{j_{i-1}}}s_{\beta_{j_{i}}}$} \\[2mm]
\text{is a quantum edge}
\end{array} \right\}, 
\end{equation}
\begin{equation} \label{def:height}
\dnn(A):=\sum_{j\in A^-}|\beta_j|^\vee\in Q^{\vee,+}.
\end{equation}

Write $\lambda \in \Lambda$ as $\lambda=\sum_{i\in I}\lambda_i\vpi_i \in \Lambda$, 
with $\lambda_{i} \in \BZ$ for $i \in I$. Following \cite[Section 4.1]{LNS}, 
let $\ol{\Par(\lambda)}$ denote the set of $I$-tuples of partitions 
$\bchi=(\chi^{(i)})_{i\in I}$ such that $\chi^{(i)}$ is a partition of 
length at most $\max(\lambda_i,0)$; in \cite[Section 2.5]{NSZ}, for $\lambda \in P^{+}$, 
we introduced the set $\Par(\lambda)$ of $I$-tuples of partitions $\bchi=(\chi^{(i)})_{i\in I}$ 
such that $\chi^{(i)}$ is a partition of length less than $\lambda_i$ 
(which we do not use in this paper).
For $\bchi = (\chi^{(i)})_{i \in I} \in \ol{\Par(\lambda)}$, 
we set $\iota(\bchi) := \sum_{i \in I} \chi^{(i)}_1 \alpha_i^{\vee} \in Q^{\vee,+}$, 
with $\chi^{(i)}_1$ the first part of the partition $\chi^{(i)}$ for each $i \in I$.

By specializing at $q = 1$ in \cite[Theorem~33]{LNS} (cf. \cite[Theorem~5.16]{KoLN}), 
we obtain the following general Chevalley formula for $K_{T}(\QG)$. 
%
%
\begin{thm} \label{thm:genchev} 
Let $\lambda=\sum_{i\in I}\lambda_i\vpi_i \in \Lambda$ be an arbitrary weight, 
$\Gamma(\lambda)$ an arbitrary reduced $\lambda$-chain, 
and $x=wt_{\xi}\in W_{\af}^{\ge 0}$. 
Then, the following equality holds in $K_{T}(\QG)${\rm :}
\begin{align}
& \LQG{-\lng \lambda} \otimes \SQG{x} \nonumber \\[3mm]
& \quad = \sum_{A \in \CA(w,\Gamma(\lambda))}
  \sum_{ \bchi \in \ol{\Par(\lambda)} }
  (-1)^{n(A)} \be^{\wt(A)}
  \SQG{ \ed(A)t_{\xi+\dnn(A)+\iota(\bchi)} }. \label{eq:gchev-org}
\end{align}
\end{thm}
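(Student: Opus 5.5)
The plan is to deduce the formula directly from its $(T \times \BC^{\ast})$-equivariant version, \cite[Theorem~33]{LNS} (cf. \cite[Theorem~5.16]{KoLN}), by specializing at $q = 1$. No new combinatorics is needed; the work is to check that the specialization is legitimate term by term.

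First, I will recall the precise statement of \cite[Theorem~33]{LNS}. For an arbitrary $\lambda \in \Lambda$, an arbitrary reduced $\lambda$-chain $\Gamma(\lambda)$ and $x = w t_{\xi} \in W_{\af}^{\geq 0}$, it expresses $\LQG{-\lng\lambda} \otimes \SQG{x}$ in $K_{T \times \BC^{\ast}}(\QG)$ as a sum over $A \in \CA(w,\Gamma(\lambda))$ and $\bchi \in \ol{\Par(\lambda)}$. I expect each summand to have the form
\begin{equation*}
(-1)^{n(A)} \, q^{c(A,\bchi)} \, \be^{\wt(A)} \, \SQG{\ed(A)t_{\xi+\dnn(A)+\iota(\bchi)}},
\end{equation*}
where $c(A,\bchi)$ is an integer built from the heights $l_j$ with $j \in A$, the size $|\bchi|$ of $\bchi$, and a pairing of $\lambda$ against the translation part. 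The key structural point is that the translation part $\xi+\dnn(A)+\iota(\bchi)$ depends only on $\iota(\bchi)$, while the remaining data of $\bchi$ is recorded only in the power of $q$. I will check that the conventions of \cite{LNS}, in particular for $\CO_{\QG}(\nu)$, for $\wt(A)$ and for $\ol{\Par(\lambda)}$, agree with those fixed above; the paper explicitly follows \cite{LNS} and \cite{KaNS}, so this should be routine.

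Second, I will justify the specialization. By definition, $K_T(\QG)$ is the specialization at $q^{\pm1}=1$ of $K_{T\times\BC^{\ast}}(\QG)$. By \cite[Theorem~1.26]{Kat2}, the endomorphism $\bullet \otimes \LQG{\nu}$ of $K_T(\QG)$ is induced from that of $K_{T\times\BC^{\ast}}(\QG)$. Hence the left-hand side specializes correctly, and it remains to show that specializing the right-hand side term by term gives a well-defined element of $\prod_{y \in W_{\af}^{\geq 0}} \BZ[\Lambda]\,\SQG{y}$. For a fixed $y = v t_{\zeta}$, only finitely many pairs $(A,\bchi)$ contribute to its coefficient: $A$ ranges over subsets of the finite set $[m]$, and $\iota(\bchi) \leq \zeta - \xi$ bounds the first parts $\chi^{(i)}_1$, hence bounds each $\chi^{(i)}$, since its length is at most $\max(\lambda_i,0)$. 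So every coefficient is a finite Laurent polynomial in $q$, and putting $q=1$ simply deletes $q^{c(A,\bchi)}$. This yields \eqref{eq:gchev-org}.

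The main obstacle is bookkeeping rather than mathematics. One has to confirm that the topology in which the $q$-formula converges is compatible with the coefficientwise specialization, since an infinite sum in $K_{T\times\BC^{\ast}}(\QG)$ need not specialize naively in general. This is settled exactly by the finiteness argument above together with \cite[Lemma~1.22]{Kat2}, which identifies $K_T(\QG)$ with the direct product. I will also verify that no factor such as $1/(1-q^{-1})$ survives in the $q$-formula. The sum over $\bchi$ should already be the expanded form of such factors, and I will check this against \cite{LNS}.
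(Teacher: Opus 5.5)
Your proposal is correct and takes the same route as the paper, which obtains this formula simply by specializing \cite[Theorem~33]{LNS} at $q=1$. Your finiteness check makes explicit why the coefficientwise specialization is legitimate, a point the paper leaves implicit: for a fixed $y = v t_{\zeta}$ only finitely many pairs $(A,\bchi)$ contribute, because $A \subset [m]$, and once $A$ is fixed the first parts recorded by $\iota(\bchi)$ are fixed, which together with the length bound $\max(\lambda_i,0)$ leaves finitely many $\bchi$.
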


%
\subsection{$K$-theoretic Gromov-Witten invariants for $G/B$.}
\label{subsec:KGW}

For any projective $T$-variety $Y$, we denote by $K_T(Y)$ 
the Grothendieck group of $T$-equivariant algebraic vector bundles over $Y$. 
This ring is an algebra over $K_T(\pt)=R(T)$, 
the representation ring of $T$, which is identified with 
the group algebra $\BZ[\Lambda]$ of $\Lambda$. 
Let $\chi_Y: K_T(Y) \rightarrow K_T(\pt)$ be the pushforward map 
along the structure morphism $Y \rightarrow \pt$.

Recall that the Weyl group $W= \langle s_j \mid j \in I \rangle$ of $G$ 
can be identified with $N_G(T)/T$, 
where $N_G(T)$ is the normalizer of $T$ in $G$. 
Let $X:=G/B$ be the (full) flag manifold. 
Each Weyl group element $w \in W$ defines 
the Schubert varieties $X_w = \ol{BwB/B}$ and $X^w=\ol{B^{-}wB/B}$ in $X$ 
with $\dim X_w = \codim X^w = \ell(w)$.
The equivariant $K$-theory ring $K_T(X)$ of 
the flag manifold $X=G/B$ has two $K_T(\pt)$-bases 
$\bigl\{ \CO_w \mid w \in W \bigr\}$ and 
$\bigl\{ \CO^w \mid w \in W \bigr\}$, 
where $\CO_w=[\CO_{X_w}]$ and $\CO^w=[\CO_{X^w}]$ are the 
Schubert classes defined by the structure sheaves of 
the Schubert varieties $X_w$ and $X^w$, respectively.

The homology group $H_{2}(X; \BZ)$ can be identified 
with $Q^{\vee}$, with $\alpha_j^\vee$ corresponding to 
the class $[X_{s_j}]$ of the Schubert curve $X_{s_j} \subset X$ for $j \in I$. 
For an effective degree $d \in Q^{\vee,+}$ and $m \geq 0$, 
let $\ol{\CM}_{0,m}(X,d)$ denote 
the Kontsevich moduli space of $m$-pointed stable maps to $X$ of 
genus zero and degree $d$ (see \cite{FP}, \cite{Tho}). 
This moduli space is non-empty when $d\neq 0$ or $m\geq 3$. 
In this case, it is equipped with ($T$-equivariant) evaluation maps 
$\ev_k:\ol{\CM}_{0,m}(X,d) \rightarrow X$ for $1 \leq k\leq m$, 
which send a stable map to the image of the $k$-th marked point in its domain.

For classes 
$\gamma_k \in K_{T}(X)$, $1 \leq k \leq m$, 
the corresponding $m$-point ($T$-equivariant) 
$K$-theoretic Gromov-Witten (KGW) invariant is defined to be
\begin{equation*}
\langle \gamma_1, \gamma_2, \ldots , \gamma_m \rangle_d : = 
\chi^{T}_{\ol{\CM}_{0,m}(X,d)}\left(\prod_{k=1}^m\ev_k^*\gamma_k \right) 
\in K_{T}(\pt).
\end{equation*}
%

%
\subsection{Quantum $K$-theory ring of $G/B$.}
\label{subsec:QKGB}

Let $\sQ := (\sQ_j \mid j \in I)$ be 
the Novikov variables. Following \cite{Giv} and \cite{Lee}, 
the $T$-equivariant (small) quantum $K$-theory ring of $X=G/B$ is defined to be 
\begin{equation*} 
QK_T(X) := K_T(X) \otimes_{K_T(\pt)} K_T(\pt)\bra{\sQ}
\end{equation*}
as a $K_T(\pt)\bra{\sQ}$-module; note that 
$QK_{T}(X)$ is a free module over $K_T(\pt)\bra{\sQ}$, 
with the Schubert classes $\bigl\{ \CO_{w} \mid w \in W \bigr\}$ and  
the opposite Schubert classes $\bigl\{ \CO^{w} \mid w \in W \bigr\}$ as bases.
It is equipped with a commutative and associative 
product (called the quantum product), denoted by $\star$, which is determined by the condition: 
%
%
\begin{equation} \label{eq:qmulti}
\Kmet{\sigma_1 \star \sigma_2}{\sigma_3}=
 \sum_{ d \in Q^{\vee,+} }
 \sQ^d \thpx{\sigma_1}{\sigma_2}{\sigma_3}_{d} \quad 
 \text{for all  } \sigma_1,\sigma_2,\sigma_3\in K_T(G/B), 
\end{equation}
where $\sQ^d := \prod_{j \in I} \sQ_j^{d_j}$ 
for $d = \sum_{j \in I} d_j \alpha_j^{\vee} \in Q^{\vee,+}$, and 
\begin{equation*}
\Kmet{\sigma_1}{\sigma_2} \coloneqq 
\sum_{d \in Q^{\vee,+}} \sQ^d \twpx{\sigma_1}{\sigma_2}_{d}
\end{equation*}
is the quantum $K$-metric. 

In \cite{Kat2}, based on \cite{BF,IMT} (see also \cite{ACT}), 
Kato established an $K_{T}(\pt)$-module isomorphism $\Phi$ from $QK_{T}(X)$ onto 
the $T$-equivariant $K$-group $K_{T}(\QG)$ of the semi-infinite flag manifold $\QG$, 
in which tensor product operation with an arbitrary line bundle class is 
induced from that in $K_{T \times\BC^*}(\QG)$ by the specialization at $q = 1$; 
in our notation, 
the map $\Phi$ sends 
the (opposite) Schubert class $\be^{\mu}\CO^{w} \sQ^{\xi}$ in $QK_{T}(X)$ 
to the corresponding semi-infinite Schubert class $\be^{-\mu}\SQG{ wt_{\xi} }$ 
in $K_{T}(\QG)$ for $\mu \in \Lambda$, $w \in W$, and $\xi \in Q^{\vee,+}$. 
The isomorphism $\Phi$ also respects, in a sense, the quantum product $\star$ 
in $QK_{T}(X)$ and the tensor product $\otimes$ in $K_{T}(\QG)$.
More precisely, one has the commutative diagram:
\begin{equation} \label{eq:qdiagram}
\begin{CD}
QK_{T}(X) @>{\Phi}>{\cong}> K_{T}(\QG) \\
@V{\bullet \,\, \star [\CO_{X}(- \varpi_i)]}VV  
@VV{\bullet \,\, \otimes [\CO_{\QG}(\lng \varpi_i)]}V \\
QK_{T}(X) @>{\Phi}>{\cong}> K_{T}(\QG), 
\end{CD}
\end{equation}
where for $\nu \in \Lambda$, the line bundle $\CO_{X}(-\nu)$ over $X$ denotes 
the $G$-equivariant line bundle constructed as 
the quotient space $G \times^{B} \BC_{\nu}$ of 
the product space $G \times \BC_{\nu}$ by the usual (free) left action of 
the Borel subgroup $B$ of $G$ corresponding to the positive roots, 
with $\BC_{\nu}$ the one-dimensional $B$-module of weight $\nu$; 
we simply write $\CO(- \nu)$ for the class $[\CO_{X}(- \nu)]$ in $K_{T}(X)$ of the line bundle $\CO_{X}(- \nu)$. 
(Here we warn the reader that the conventions of \cite{Kat2} 
differ from those of \cite{KaNS} and this paper,
by the twist coming from the involution $-\lng$.)
Also, we know from \cite{Kat2} that for $w \in W$ and $\xi \in Q^{\vee,+}$, 
$\Phi(\CO^{w} \sQ^{\xi} ) = \st_{\xi} \Phi( \CO^{w} )$ holds, 
and hence that for an arbitrary element $\bullet$ of 
$QK_{T}(X)$ and $\xi \in Q^{\vee,+}$, the equality 
\begin{equation} \label{eq:push}
\Phi(\bullet \, \sQ^{\xi}) = \st_{\xi} \, \Phi(\bullet)
\end{equation}
holds. 

%
\section{Explicit description of certain 3-point KGW invariants.}
\label{sec:main}

A weight $\nu \in P$ is said to be minuscule if 
$\bigl\{ \pair{\nu}{\alpha^{\vee}} \mid \alpha \in \Delta \bigr\} = \bigl\{ -1, 0,1 \bigr\}$; 
if $\nu$ is a dominant minuscule weight, then $\nu = \vpi_{i}$ for some $i \in I$ 
such that $\pair{\varpi_i}{\vartheta^{\vee}} = 1$, 
where $\vartheta^{\vee} \in \Delta^{\vee}$ is the highest coroot. 
Throughout this section, we fix $i \in I$ such that $\vpi_{i}$ is a minuscule fundamental weight, 
and let $\lambda \in W\vpi_{i}$. 

%
\subsection{A special $(-\lambda)$-chain of roots and a reflection order.}
\label{subsec:chain}

Let $x$ be the (unique) minimal-length element in 
$\bigl\{ w \in W \mid w \vpi_{i} = \lambda \bigr\}$, 
and let $x = s_{i_{a}} \cdots s_{i_{1}}$ be a reduced expression for $x$, 
where $a:=\ell(x)$. 
Also, let $y$ be the (unique) element such that 
$yx$ is the (unique) minimal-length element in 
$\bigl\{ w \in W \mid w \vpi_{i} = \lng \vpi_{i} \bigr\}$, 
and let $y = s_{k_{1}} \cdots s_{k_{b}}$ be a reduced expression for $y$, 
where $b:=\ell(y)$. We set 
\begin{align}
& \beta_{c}:=s_{i_{a}} \cdots s_{i_{c+1}}\alpha_{i_{c}} \in \Delta^{+} \quad 
  \text{for $1 \le c \le a$}, \label{eq:beta} \\
& \gamma_{d}:=s_{k_{b}} \cdots s_{k_{d+1}}\alpha_{k_{d}} \in \Delta^{+} \quad 
  \text{for $1 \le d \le b$}. \label{eq:gamma}
\end{align}

The following proposition can be proved in exactly 
the same way as \cite[Lemma 4.1]{LNOS}, in which we assumed that 
$\Fg$ is simply-laced; in the case that $\Fg$ is not simply-laced, 
by making use of the root datum for the (Langlands) dual Lie algebra $\Fg^{\vee}$ of $\Fg$, 
the argument in the simply-laced case goes through.
%
%
\begin{prop} \label{prop:LNOS41}
If we set 
\begin{equation} \label{eq:GammaJ}
\Gamma=(\zeta_{1},\dots,\zeta_{a+b}):=
(\beta_{a},\dots,\beta_{1},-\gamma_{1},\dots,-\gamma_{b}),
\end{equation}
then $\Gamma$ is a reduced $(-\lambda)$-chain from $A_{\circ}$ to $A_{\lambda} = A_{-(-\lambda)}$. 
Moreover, for $1 \le c \le a$, the affine hyperplane between the $(c-1)$-th alcove and 
the $c$-th alcove in $\Gamma$ is $H_{\beta_{a-c+1},0}$, and 
for $1 \le d \le b$, the affine hyperplane between the $(a+d-1)$-th alcove and 
the $(a+d)$-th alcove in $\Gamma$ is $H_{-\gamma_{d},-1} = H_{\gamma_{d},1}${\rm ;}
remark that $\lambda \in H_{\gamma_{d},1}$, and 
hence $s_{\gamma_{d},1}\lambda = \lambda$ for all $1 \le d \le b$. 
\end{prop}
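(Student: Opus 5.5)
We show directly that the alcove path determined by $\Gamma$ crosses exactly the $(-\lambda)$-hyperplanes, i.e.\ the hyperplanes separating $A_\circ$ from $A_\lambda$, each exactly once and in the stated order. Such a path is automatically reduced.

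\textbf{Step 1: count the separating hyperplanes.} Since $\lambda\in W\vpi_i$ is minuscule, $\pair{\lambda}{\alpha^\vee}\in\{-1,0,1\}$ for every $\alpha\in\Delta^+$. For $\alpha\in\Delta^+$, the hyperplanes $H_{\alpha,l}$ separating $A_\circ$ from $A_\lambda$ are determined by $\pair{\lambda}{\alpha^\vee}$:
\begin{itemize}
\item if $\pair{\lambda}{\alpha^\vee}=1$, the only one is $H_{\alpha,1}$;
\item if $\pair{\lambda}{\alpha^\vee}=-1$, the only one is $H_{\alpha,0}$;
\item if $\pair{\lambda}{\alpha^\vee}=0$, there are none.
\end{itemize}
So the reduced length is $m=\#\{\alpha\in\Delta^+ : \pair{\lambda}{\alpha^\vee}\neq 0\}=\ell(w^{\max})$, where $w^{\max}$ is the maximal-length element of $\{w\in W : w\vpi_i=\lambda\}$ (equivalently, $\ell(\lng)-\ell(W_{\vpi_i})$). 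I will check that $a+b$ equals this number. Here $a=\ell(x)$ counts the positive roots $\alpha$ with $\pair{\lambda}{\alpha^\vee}<0$, namely $\Inv(x^{-1})$ paired against the dominant $\vpi_i$. Also $yx$ is the minimal representative sending $\vpi_i$ to $\lng\vpi_i$, and $\ell(yx)=\ell(y)+\ell(x)$, which should follow from minimality of $x$ and $yx$ in their cosets. Hence $b=\#\{\alpha\in\Delta^+ : \pair{\lambda}{\alpha^\vee}=1\}$.

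\textbf{Step 2: identify the roots.} The roots $\beta_c=s_{i_a}\cdots s_{i_{c+1}}\alpha_{i_c}$ are exactly the inversion set $\{\alpha\in\Delta^+ : x^{-1}\alpha<0\}$. Since $x$ is minimal in its coset, each such $\alpha$ satisfies $\pair{\lambda}{\alpha^\vee}=\pair{\vpi_i}{(x^{-1}\alpha)^\vee}<0$, hence equals $-1$. Likewise the $\gamma_d$ are the inversions of $y^{-1}$. Using $\pair{y^{-1}\lambda}{\cdot}$ and $yx\vpi_i=\lng\vpi_i$, which is antidominant, one shows $\pair{\lambda}{\gamma_d^\vee}=-\pair{\lng\vpi_i}{(y^{-1}\gamma_d)^\vee}\ldots$ equals $+1$. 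This is the sign bookkeeping I expect to need most care.

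\textbf{Step 3: verify the crossing order.} I proceed by induction on the prefix. The $c$-th alcove of the first segment is $A_\circ$ translated by the finite Weyl element $s_{i_a}\cdots s_{i_{a-c+1}}$ acting linearly; crossing $H_{\beta_{a-c+1},0}$ is the standard reflection step, so after $a$ steps we reach $xA_\circ$. For the second segment, each $\gamma_d$ satisfies $\pair{\lambda}{\gamma_d^\vee}=1$, so $s_{\gamma_d,1}$ fixes $\lambda$. The path from $xA_\circ$ to $A_\lambda$ is then obtained from a reduced path from $\lng$-type data via the affine reflections $s_{\gamma_d,1}$: the consecutive alcoves are $s_{\gamma_b,1}\cdots s_{\gamma_{b-d+1},1}$ applied to $xA_\circ$, read in the correct order. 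The last of these must equal $A_\circ+\lambda$. This amounts to showing that $t_\lambda$ restricted to the alcove equals the product $s_{\gamma_1,1}\cdots s_{\gamma_b,1}\,x$ (up to the stabilizer). That identity comes from $y x$ sending $\vpi_i$ to $\lng\vpi_i$ together with the translation $A_\circ\mapsto A_\circ+\lambda$ for minuscule $\vpi_i$, whose vertex $\vpi_i$ lies in $\overline{A_\circ}$.

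\textbf{Main obstacle.} Step 3 is where the proof in \cite[Lemma 4.1]{LNOS} applies verbatim in the simply-laced case. In the non-simply-laced case I would pass to the dual root datum, so that hyperplanes $H_{\beta,l}$ are indexed by coroots and the minuscule condition $\pair{\vpi_i}{\vartheta^\vee}=1$ plays the role of a minuscule condition there. Once the alcove sequence is shown to end at $A_\lambda$ with $a+b=m$ crossings, reducedness follows from Step 1. The explicit hyperplane identifications and $s_{\gamma_d,1}\lambda=\lambda$ are read off along the way.
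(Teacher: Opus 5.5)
Your overall strategy is sound: exhibit an honest alcove path with $a+b$ steps from $A_\circ$ to $A_\lambda$, and compare with the number $m$ of separating hyperplanes. Step~1, Step~2 and the first segment of Step~3 are essentially correct.

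There are two small slips. First, $m=\ell(\lng)-\ell(w_J)$, with $w_J$ the longest element of the stabilizer $W_J$ of $\vpi_i$, is the length of $\lng w_J$. It is not the length of the longest element $xw_J$ of $\{w \mid w\vpi_i=\lambda\}=xW_J$, which is $a+\ell(w_J)$. Second, since $y\gamma_d\in\Delta^-$, the sign computation should read $\pair{\lambda}{\gamma_d^\vee}=\pair{\lng\vpi_i}{(y\gamma_d)^\vee}\ge 0$. The value $0$ is excluded by minimality of $y^{-1}$ in $\{v\in W\mid v(-\lng\vpi_i)=-\lambda\}$.

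\textbf{The gap: the second segment of Step~3.} This is the only nontrivial content of the proposition, because it is where the order $\gamma_1,\dots,\gamma_b$ must be justified. Your formula $s_{\gamma_b,1}\cdots s_{\gamma_{b-d+1},1}xA_\circ$ for the intermediate alcoves, and the identity with $s_{\gamma_1,1}\cdots s_{\gamma_b,1}x$, have the reflections in the wrong order and are false.

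Take type $A_2$ with $\lambda=\vpi_1$. Then $x=e$, $y=s_2s_1$, $\gamma_1=\alpha_1+\alpha_2=\theta$ and $\gamma_2=\alpha_1$. The hyperplane $H_{\alpha_1,1}$ meets $\ol{A_\circ}$ only in the vertex $\vpi_1$, so it is not a wall of $A_\circ$. Moreover, $s_{\theta,1}s_{\alpha_1,1}A_\circ$ has vertices $\vpi_1$, $\vpi_1-\vpi_2$, $2\vpi_1-\vpi_2$, while $A_\lambda$ has vertices $\vpi_1$, $2\vpi_1$, $\vpi_1+\vpi_2$. The correct rule is: if a path crosses $H_1,H_2,\dots$ in turn, its $d$-th alcove is $s_{H_d}\cdots s_{H_1}$ applied to the initial one.

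More importantly, you never verify that $H_{\gamma_d,1}$ is a wall of the $(a+d-1)$-th alcove. At that point you fall back on \cite[Lemma~4.1]{LNOS} plus the dual root datum, which is exactly and only what the paper does. So your direct argument stops short of the one step that actually needs proof.

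\textbf{How to close it.} The idea in your last sentence of Step~3 works, uniformly in all types.
\begin{enumerate}
\item Since $\pair{\vpi_i}{\vartheta^\vee}=1$, $\vpi_i$ is a vertex of $\ol{A_\circ}$. Hence $\lambda=x\vpi_i$ is a common vertex of $xA_\circ$ and $A_\lambda=\lambda+A_\circ$.
\item The hyperplanes through $\lambda$ are the $\lambda+H_{\alpha,0}=H_{\alpha,\pair{\lambda}{\alpha^\vee}}$. The alcoves with vertex $\lambda$ are exactly the $\lambda+vA_\circ$, $v\in W$, adjacent as in the Coxeter complex of $W$.
\item Checking signs gives $A_\circ-\vpi_i=w_J\lng A_\circ$. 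Since $yx=\lng w_J$, this yields $xA_\circ=\lambda+xw_J\lng A_\circ=\lambda+y^{-1}A_\circ$.
\item The reduced gallery $s_{k_b}\cdots s_{k_1}A_\circ\to s_{k_b}\cdots s_{k_2}A_\circ\to\cdots\to s_{k_b}A_\circ\to A_\circ$ crosses $H_{\gamma_1,0},\dots,H_{\gamma_b,0}$ in this order, each in the direction $+\gamma_d$.
\item Translating by $\lambda$, the $(a+d)$-th alcove is $\lambda+s_{k_b}\cdots s_{k_{d+1}}A_\circ=s_{\gamma_d,1}\cdots s_{\gamma_1,1}xA_\circ$. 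The wall crossed is $H_{\gamma_d,\pair{\lambda}{\gamma_d^\vee}}=H_{\gamma_d,1}$, traversed in the direction $-\zeta_{a+d}=\gamma_d$ required by the $(-\lambda)$-chain convention.
\end{enumerate}
Together with your first segment and the count $a+b=m$, this proves the proposition directly, with no passage to the dual root datum.
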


We set 
\begin{align}
& \Delta_{-1}^{+}:=\bigl\{ \beta \in \Delta^{+} \mid 
\pair{\lambda}{\beta^{\vee}} = -1 \bigr\} = \bigl\{ \beta_{a},\dots,\beta_{1} \bigr\}, \label{eq:D-1} \\
& \Delta_{0}^{+}:=\bigl\{ \alpha \in \Delta^{+} \mid 
\pair{\lambda}{\alpha^{\vee}} = 0 \bigr\}, \\
& \Delta_{1}^{+}:=\bigl\{ \gamma \in \Delta^{+} \mid 
\pair{\lambda}{\gamma^{\vee}} = 1 \bigr\} = \bigl\{ \gamma_{1},\dots,\gamma_{b} \bigr\}. \label{eq:D+1}
\end{align}
Let us show the second equality in \eqref{eq:D-1}. 
For the inclusion $\subset$, let $\beta \in \Delta^{+}$ be such that $\pair{\lambda}{\beta^{\vee}} = -1$. 
Then we have $\pair{\vpi_{i}}{x^{-1}\beta^{\vee}} = -1$, which implies that $\beta$ is contained in the 
inversion set $\Inv (x^{-1}) := \Delta^{+} \cap x \Delta^{-}$ for $x^{-1}$. 
Since it is well-known that $\Inv (x^{-1}) = \bigl\{ \beta_{a},\dots,\beta_{1} \bigr\}$, 
we obtain the inclusion $\subset$. For the opposite inclusion $\supset$, 
let $1 \le c \le a$. We have 
\begin{equation*}
\pair{\lambda}{\beta_{c}^{\vee}} = 
\pair{ s_{i_a} \cdots s_{i_1}\vpi_{i} }{ s_{i_{a}} \cdots s_{i_{c+1}}\alpha_{i_{c}}^{\vee} } = 
\pair{ s_{i_c} \cdots s_{i_1}\vpi_{i} }{ \alpha_{i_{c}}^{\vee} }; 
\end{equation*}
notice that $s_{i_c} \cdots s_{i_1}$ is the (unique) minimal-length element in 
$\bigl\{ w \in W \mid w \vpi_{i} = s_{i_c} \cdots s_{i_1} \vpi_{i} \bigr\}$, 
and $(s_{i_c} \cdots s_{i_1})^{-1}\alpha_{i_c} \in \Delta^{-}$ since 
$\ell(s_{i_c} \cdots s_{i_1}) = c$. Hence, it follows that 
$\pair{\lambda}{\beta_{c}^{\vee}} = -1$; see, for example, \cite[Proposition 5.1]{LNSSS1}. 
Thus, we have shown the second equality in \eqref{eq:D-1}. 
Next, let us show the second equality in \eqref{eq:D+1}. We see that 
$y^{-1}$ is the (unique) minimal-length element in 
$\bigl\{ w \in W \mid w (-\lng \vpi_{i}) = - \lambda \bigr\}$. Hence, by the argument above, 
we deduce that $\bigl\{ \gamma \in \Delta^{+} \mid 
\pair{-\lambda}{\gamma^{\vee}} = -1 \bigr\} = \bigl\{ \gamma_{1},\dots,\gamma_{b} \bigr\}$, as desired. 

Also, we obtain the following proposition 
from \cite[Proposition 2.12 and Remark 2.13]{KN}. 
%
%
\begin{prop} \label{prop:ref-chain}
There exists a reflection order $\lhd$ on $\Delta^{+}$ 
satisfying the following conditions{\rm :} 
\begin{enu}
\item $\beta_{a} \lhd \beta_{a-1} \lhd  \cdots \lhd \beta_{2} \lhd \beta_{1}$ 
and $\gamma_{1} \lhd \gamma_{2} \lhd \cdots \lhd \gamma_{b-1} \lhd \gamma_{b}${\rm ;} 
\item $\beta \lhd \alpha \lhd \gamma$ for all $\beta \in \Delta_{-1}^{+}$, 
$\alpha \in \Delta_{0}^{+}$, and $\gamma \in \Delta_{1}^{+}$. 
\end{enu}
\end{prop}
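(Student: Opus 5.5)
The plan is to build $\lhd$ from a reduced expression of $\lng$, which is the construction behind \cite[Proposition~2.12 and Remark~2.13]{KN}. If $\lng=s_{j_1}\cdots s_{j_N}$ is reduced, then the sequence
\[
\alpha_{j_1},\ s_{j_1}\alpha_{j_2},\ \ldots,\ s_{j_1}\cdots s_{j_{N-1}}\alpha_{j_N}
\]
enumerates $\Delta^{+}$ and defines a reflection order. Every reflection order arises this way. The reverse of a reflection order is again a reflection order, since it corresponds to the reduced word of $\lng$ conjugated by $\lng$. Moreover, if $u=s_{j_1}\cdots s_{j_c}$ is the reduced prefix of the word, then the first $c$ roots of the order are exactly $\Inv(u^{-1})=\Delta^{+}\cap u\Delta^{-}$. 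I will therefore choose a reduced word for $\lng$ of the form (word for $x$)$\cdot$(middle)$\cdot$(tail). The word for $x$ should produce $\Delta^{+}_{-1}$ in the order of (1). The tail should produce $\Delta_{1}^{+}$ as the final $b$ roots, in increasing order $\gamma_1,\ldots,\gamma_b$.

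The first block is a direct computation. Take the prefix $s_{i_a}s_{i_{a-1}}\cdots s_{i_1}$, the given reduced word for $x$. The roots it produces are $\alpha_{i_a}$, then $s_{i_a}\alpha_{i_{a-1}}$, and so on, which are $\beta_a,\beta_{a-1},\ldots,\beta_1$ by \eqref{eq:beta}. By the computation following \eqref{eq:D-1}, this set is $\Inv(x^{-1})=\Delta^{+}_{-1}$. Hence any reduced word for $\lng$ beginning with this word for $x$ gives an order in which $\Delta^+_{-1}$ is an initial segment ordered as $\beta_a\lhd\cdots\lhd\beta_1$. Such a word exists because $\ell(x^{-1}\lng)=N-a$.

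For the last block I will use the symmetry $\lambda\mapsto-\lambda$. The weight $-\lambda$ lies in $W(-\lng\vpi_i)$, and $-\lng\vpi_i$ is again a minuscule fundamental weight. As observed after \eqref{eq:D+1}, $y^{-1}=s_{k_b}\cdots s_{k_1}$ is the minimal-length element sending $-\lng\vpi_i$ to $-\lambda$. Applying the first-block argument to $-\lambda$ and the word $s_{k_b}\cdots s_{k_1}$ gives the roots $\alpha_{k_b}$, $s_{k_b}\alpha_{k_{b-1}}$, and so on, namely $\gamma_b,\gamma_{b-1},\ldots,\gamma_1$. These form the initial segment $\Delta_1^{+}$ of some reflection order. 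Reversing that order puts $\Delta_1^+$ as a final segment in the increasing order $\gamma_1\lhd\cdots\lhd\gamma_b$. What is needed is one order realizing both ends simultaneously.

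To get that, I will take $\lng=x\cdot z$ and choose the reduced word for $z$ carefully. The complement $\Delta^{+}\setminus\Delta^{+}_{-1}$ is $x\Inv(z^{-1})$, and its terminal part should be $\Delta_1^{+}$. The natural choice is
\[
z=\bigl(\text{longest element of }W_{\Delta_0}\text{ suitably conjugated}\bigr)\cdot(\text{word for }y\text{, adjusted}),
\]
using $\lng=yx\,w_{\circ,i}$, where $w_{\circ,i}$ is the longest element of the stabilizer of $\vpi_i$. Note that $\Delta_0^{+}=x(\Delta^+_{\vpi_i})$ is the positive system of a parabolic-type subsystem. So the middle block is a reflection order of that subsystem, and it sits between $\Delta_{-1}^{+}$ and $\Delta_1^{+}$, which gives condition (2).

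I expect the main obstacle to be this gluing step: one reduced word must start with the chosen word of $x$ and end with the chosen word producing $\gamma_1,\ldots,\gamma_b$. If the explicit factorization is awkward, I will instead verify directly that the concatenated total order is convex. The criterion is that whenever $\alpha\lhd\beta$ and $p\alpha+q\beta\in\Delta^+$ with $p,q>0$, the root $p\alpha+q\beta$ lies strictly between $\alpha$ and $\beta$. The sign of $\pair{\lambda}{\gamma^\vee}$ equals the sign of $(\lambda,\gamma)$, and the latter is additive, so $\lambda$-levels of sums are controlled. Mixed-level cases then follow from the blocks being initial and final segments, as biconvex inversion sets. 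Same-level cases reduce to convexity of each block, which holds because each block comes from a reduced word.
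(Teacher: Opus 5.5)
Your proposal is correct, and it takes a different route from the paper. The paper gives no argument for this proposition; it simply imports the existence of such an order from \cite[Proposition~2.12 and Remark~2.13]{KN}. You instead construct the order from a reduced word of $\lng$, with a direct convexity check as fallback.

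The gluing step you flag as the main obstacle is actually immediate. Let $W_J$ be the stabilizer of $\vpi_i$, with longest element $w_{\circ,J}$. Since $yx$ is the minimal-length element of $\lng W_J$, we have $yx=\lng w_{\circ,J}$, hence $\lng y\lng=w_{\circ,J}x^{-1}\lng$ and
\[
\lng = x\cdot w_{\circ,J}\cdot(\lng y\lng), \qquad a+\ell(w_{\circ,J})+b=\#\Delta^{+}_{-1}+\#\Delta^{+}_{0}+\#\Delta^{+}_{1}=N.
\]
Here $\ell(w_{\circ,J})=\#\Delta_0^{+}$ because $x^{-1}$ maps $\Delta_0^{+}$ bijectively onto $\Delta_J^{+}$, using that $x$ is minimal in $xW_J$. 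Set $\alpha_{k^{*}}:=-\lng\alpha_k$. Then the word $s_{i_a}\cdots s_{i_1}$, followed by any reduced word of $w_{\circ,J}$, followed by $s_{k_1^{*}}\cdots s_{k_b^{*}}$, is a reduced word of $\lng$. Its reflection order has three consecutive segments:
\begin{itemize}
\item it starts with $\beta_a,\dots,\beta_1$;
\item it continues with $x\Delta_J^{+}=\Delta_0^{+}$;
\item its root in position $N-b+d$ is $-\lng s_{k_b^{*}}\cdots s_{k_{d+1}^{*}}\alpha_{k_d^{*}}=\gamma_d$.
\end{itemize}
So (1) and (2) hold at once, and no ``suitable conjugation'' or ``adjustment'' is needed.

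Your fallback convexity check is also valid, but phrase the mixed-level cases correctly. Biconvexity of $\Delta^{+}_{\pm1}$ as sets only tells you the level of $p\alpha+q\beta$; it does not give the ordering you need. Take $\alpha\in\Delta^{+}_{-1}$, $\beta\notin\Delta^{+}_{-1}$, and $\gamma=p\alpha+q\beta\in\Delta^{+}_{-1}$. What gives $\alpha\lhd\gamma$ is that the first block is an initial segment of a genuine reflection order $\prec_1$: there $\alpha\prec_1\beta$, which forces $\alpha\prec_1\gamma$. The last block is handled symmetrically via $\prec_2$.

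A small correction: the reverse of the order attached to $s_{j_1}\cdots s_{j_N}$ is the order attached to $s_{j_N^{*}}\cdots s_{j_1^{*}}$. That is, you reverse the word and then conjugate by $\lng$, not just conjugate.

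Compared with the citation, your argument is self-contained. It uses only the minuscule structure: all levels lie in $\{0,\pm1\}$, $\Delta^{+}_{-1}=\Inv(x^{-1})$, and $\Delta^{+}_{1}=\Inv(y)$. It also exhibits an explicit reduced word of $\lng$ realizing $\lhd$.
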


%
\subsection{Chevalley formulas.}
\label{subsec:gc}

Keep the notation and setting of the previous subsection. 
For $w \in W$, we denote by $\bM_{w}$ the set of all directed paths $\bm$ in 
$\QBG(W)$ of the form:
%
%
\begin{equation} \label{eq:def-bMw}
\begin{split}
& \bm:\underbrace{w = x_{0} \edge{ \beta_{a_1} } x_{1} \edge{ \beta_{a_2} } \cdots 
  \edge{ \beta_{a_s} } x_{s}}_{=: \, \bm_{\beta} \ \text{(the $\beta$-part of $\bm$)}} = 
  \underbrace{y_{t} \edge{\gamma_{ b_t }} y_{t-1} \edge{ \gamma_{b_{t-1}} } \cdots 
  \edge{ \gamma_{b_1} } y_{1}}_{=: \, \bm_{\gamma}  \ \text{(the $\gamma$-part of $\bm$)}},  \\
& \text{\rm with $s \ge 0$ and $a \ge a_{1} > a_{2} > \cdots > a_{s} \ge 1$,} \\
& \text{\rm \phantom{with} $t \ge 0$ and $1 \le b_{t} < b_{t-1} < \cdots < b_{1} \le b$}. 
\end{split}
\end{equation}
%
%
\begin{rem} \label{rem:bMw}
Let $\lhd$ be a reflection order satisfying the conditions in Proposition~\ref{prop:ref-chain}. 
An element $\bm \in \bM_{w}$ is a label-increasing directed path starting at $w$ 
(see Theorem~\ref{thm:label} (1)). Hence, by the uniqueness of a label-increasing directed path, 
it follows that if $\bm \ne \bm'$ for $\bm,\,\bm' \in \bM_{w}$, then $\ed(\bm) \ne \ed(\bm')$. 
\end{rem}

For each $j \in I$, let $\st_{j}$ denote the $\BZ[\Lambda]$-linear 
endomorphism $\st_{\alpha_j^{\vee}}$ of $K_{T}(\QG)$ defined in Section~\ref{subsec:equivgroup}, 
and let $\frac{1}{1-\st_{j}}$ denote the infinite sum: 
$1 + \st_{j} + \st_{j}^2 + \cdots$, which is a well-defined $\BZ[\Lambda]$-linear 
endomorphism of $K_{T}(\QG)$. 
%
%
\begin{thm} \label{thm:gc}
Let $w \in W$. Then, the following equalities hold in $K_{T}(\QG)${\rm :} 
\begin{align}
& \LQG{\lng \lambda} \otimes \SQG{w} \nonumber \\[3mm]
& \quad = \sum_{\bm \in \bM_{w}}
  \sum_{ \bchi \in \ol{\Par(-\lambda)} }
  (-1)^{\ell(\bm_{\gamma})} \be^{ - \ed(\bm_{\beta}) \lambda }
  \SQG{ \ed(\bm)t_{\qwt(\bm)+\iota(\bchi)} } \nonumber \\[3mm]
& \quad = \biggl(\prod_{ j \in I,\,
  \pair{\lambda}{\alpha_{j}^{\vee}} = -1 }
  \frac{1}{1-\st_{j}}\biggr) \sum_{\bm \in \bM_{w}}
  (-1)^{\ell(\bm_{\gamma})} \be^{ - \ed(\bm_{\beta}) \lambda }
  \SQG{ \ed(\bm)t_{\qwt(\bm)} }. \label{eq:gchev}
\end{align}
\end{thm}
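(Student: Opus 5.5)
Apply Theorem~\ref{thm:genchev} to the weight $-\lambda$ in place of $\lambda$, with $x = w$ (so $\xi = 0$). We use the reduced $(-\lambda)$-chain $\Gamma = (\beta_a,\dots,\beta_1,-\gamma_1,\dots,-\gamma_b)$ of Proposition~\ref{prop:LNOS41}. Since $-\lng(-\lambda) = \lng\lambda$, the left-hand side of \eqref{eq:gchev-org} is exactly $\LQG{\lng\lambda}\otimes\SQG{w}$. The task is then to identify the combinatorial data $(\CA(w,\Gamma), n(A), \wt(A), \dnn(A))$ with $(\bM_w, \ell(\bm_\gamma), -\ed(\bm_\beta)\lambda, \qwt(\bm))$.

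\textbf{Step 1: admissible subsets are the paths in $\bM_w$.} A $w$-admissible subset $A \subset [a+b]$ splits as $A = A_1 \sqcup A_2$, with $A_1 \subset \{1,\dots,a\}$ and $A_2 \subset \{a+1,\dots,a+b\}$. Listed increasingly, $A_1$ gives labels $\beta_{a_1},\beta_{a_2},\dots$ with $a_1 > a_2 > \cdots$. For $A_2$, the roots are $\zeta_{a+d} = -\gamma_d$, so the edge labels are $|\zeta_{a+d}| = \gamma_d$ with $d$ increasing. Hence $\Pi(w,A)$ is precisely a directed path of the form \eqref{eq:def-bMw}, and $A \mapsto \Pi(w,A)$ is a bijection $\CA(w,\Gamma) \to \bM_w$ with $\ed(A) = \ed(\bm)$. (The indexing $y_t \to \cdots \to y_1$ in \eqref{eq:def-bMw} with $b_t < \cdots < b_1$ is just the same increasing order.) By definition of $A^-$ and $\dnn$, we get $\dnn(A) = \qwt(\bm)$.

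\textbf{Step 2: sign and weight.} The negative roots in $\Gamma$ are exactly the $-\gamma_d$, so $n(A) = |A_2| = \ell(\bm_\gamma)$. For the weight, formula \eqref{eq:wta} applied to $-\lambda$ gives $\wt(A) = -w\,\prod_{j\in A} s_{\zeta_j,-l_j}(\lambda)$. By Proposition~\ref{prop:LNOS41}, the hyperplanes are $H_{\beta_c,0}$ for the $\beta$-part, so $l=0$ and these affine reflections are the linear $s_{\beta_c}$; for the $\gamma$-part they are $H_{\gamma_d,1}$, which contain $\lambda$. Evaluating the product from the right, the $\gamma$-reflections act first on $\lambda$ and fix it. The remaining linear reflections give $\wt(A) = -w s_{\beta_{a_1}}\cdots s_{\beta_{a_s}}\lambda = -\ed(\bm_\beta)\lambda$.

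One point needs care here. The composition of affine reflections must be applied as written in \eqref{eq:wta}, with the $\gamma$-reflections rightmost. Each $s_{\gamma_d,1}$ fixes $\lambda$ individually, so their composite fixes $\lambda$ as well. This settles the first equality in \eqref{eq:gchev}.

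\textbf{Step 3: the partition sum.} Write $-\lambda = \sum_j m_j \varpi_j$ with $m_j = -\pair{\lambda}{\alpha_j^\vee} \in \{-1,0,1\}$. Then $\ol{\Par(-\lambda)}$ consists of tuples in which $\chi^{(j)}$ is empty unless $\pair{\lambda}{\alpha_j^\vee} = -1$, in which case $\chi^{(j)}$ is a single part $n_j \ge 0$. So $\iota(\bchi)$ ranges bijectively over $\sum_{j:\pair{\lambda}{\alpha_j^\vee}=-1} \BZ_{\ge0}\alpha_j^\vee$. Since $\st_\xi\SQG{y} = \SQG{yt_\xi}$, the inner sum over $\bchi$ equals $\prod_j (1-\st_j)^{-1}$ applied to $\SQG{\ed(\bm)t_{\qwt(\bm)}}$. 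The operators $\st_j$ are $\BZ[\Lambda]$-linear and continuous for the direct-product topology, so this can be pulled outside the sum over $\bm$, giving the second equality.

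The main obstacle is Step 2: checking the heights against Proposition~\ref{prop:LNOS41}, and checking the sign conventions when passing from $\lambda$ to $-\lambda$ in Theorem~\ref{thm:genchev}, including whether the labels $|\beta_j|$ are the positive roots listed. The rest is bookkeeping.
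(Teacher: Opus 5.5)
Your proposal is correct and matches the paper's proof step for step. You apply Theorem~\ref{thm:genchev} to $-\lambda$ with the chain $\Gamma$ of Proposition~\ref{prop:LNOS41}, identify $\CA(w,\Gamma)$ with $\bM_w$ (using height $0$ on the $\beta$-part and the fact that each $s_{\gamma_d,1}$ fixes $\lambda$), and rewrite the $\ol{\Par(-\lambda)}$-sum as $\prod_j (1-\st_j)^{-1}$. Your remarks on the order of the affine reflections, and on pulling these operators out of the (in fact finite) sum over $\bM_w$, simply spell out what the paper leaves implicit.
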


\begin{proof}
Let us prove the first equality. 
We use the reduced $(-\lambda)$-chain $\Gamma$ given by \eqref{eq:GammaJ}. 
By the definitions, we can naturally identify the set $\CA(w,\Gamma)$ of 
$w$-admissible subsets with the set $\bM_{w}$ of directed paths of the form \eqref{eq:def-bMw}; 
that is, an element $A=(\zeta_{j_1},\dots,\zeta_{j_s}) \in \CA(w,\Gamma)$ for 
$1 \le j_1 < \cdots < j_s \le a+b$ corresponds to the element 
$\bm:w=z_{0} \edge{\zeta_{j_1}} \cdots \edge{\zeta_{j_s}} z_{s}$ of $\bM_{w}$. 
Since $\lambda \in H_{\gamma_{d},1}$ for $1 \le d \le b$, it follows that 
$s_{\gamma_{d},1}\lambda = \lambda$. 
Hence, if $A \in \CA(w,\Gamma)$ corresponds to 
$\bm \in \bM_{w}$ as above, then $\wt(A) = - \ed(\bm_{\beta})\lambda$. 
Also, we see from the definitions that $n(A) = \ell(m_{\gamma})$, $\ed(A) = \ed(\bm)$, and 
$\dnn(A)=\qwt(\bm)$. Substituting these equations into \eqref{eq:gchev-org}, 
we obtain the first equality, as desired. 

For the proof of the second equality, we set 
\begin{equation*}
Q^{\vee,+}_{-1}:=\bigoplus_{
  j \in I,\,
  \pair{\lambda}{\alpha_{j}^{\vee}} = -1 } \BZ_{\ge 0} \alpha_{j}^{\vee}.
\end{equation*}
Since $\lambda$ is a minuscule weight, we see that 
\begin{align*}
& \sum_{\bm \in \bM_{w}}
  \sum_{ \bchi \in \ol{\Par(-\lambda)} }
  (-1)^{\ell(\bm_{\gamma})} \be^{ - \ed(\bm_{\beta}) \lambda }
  \SQG{ \ed(\bm)t_{\qwt(\bm)+\iota(\bchi)} } \\[3mm]
& = \sum_{\bm \in \bM_{w}}
  \sum_{ \xi \in Q^{\vee,+}_{-1} } 
  (-1)^{\ell(\bm_{\gamma})} \be^{ - \ed(\bm_{\beta}) \lambda }
  \SQG{ \ed(\bm)t_{ \qwt(\bm)+\xi } } \\[3mm]
& = \biggl(\prod_{ j \in I,\,
  \pair{\lambda}{\alpha_{j}^{\vee}} = -1 }
  \frac{1}{1-\st_{j}}\biggr) \sum_{\bm \in \bM_{w}}
  (-1)^{\ell(\bm_{\gamma})} \be^{ - \ed(\bm_{\beta}) \lambda }
  \SQG{ \ed(\bm)t_{\qwt(\bm)} }.
\end{align*}
This proves the theorem. 
\end{proof}

Let us consider the special case that $w = e$. 
Let $\bm \in \bM_{e}$, and set $v:=\ed(\bm)$. 
Then, by Proposition~\ref{prop:ref-chain}, 
$\bm$ is a label-increasing directed path from $e$ to $v$ 
with respect to the reflection order $\lhd$; 
in particular, $\bm$ is a shortest directed path from $e$ to $v$. 
Since $e \le v$ in the Bruhat order, all the edges in $\bm$ are 
Bruhat edges, and hence $\qwt(\bm)=0$. Combining this observation with \eqref{eq:gchev}, 
we obtain the following. 
%
%
\begin{cor} \label{cor:gc1}
If $\bm \in \bM_{e}$, then all the edges in $\bm$ are Bruhat edges. 
Hence, the following holds in $K_{T}(\QG)${\rm :} 
\begin{align}
& \LQG{\lng \lambda} = \LQG{\lng \lambda} \otimes \SQG{e} \nonumber \\[3mm]
& \quad = \biggl(\prod_{ j \in I,\,
  \pair{\lambda}{\alpha_{j}^{\vee}} = - 1 }
  \frac{1}{1-\st_{j}}\biggr) \sum_{\bm \in \bM_{e}}
  (-1)^{\ell(\bm_{\gamma})} \be^{ - \ed(\bm_{\beta}) \lambda }
  \SQG{ \ed(\bm) }. \label{eq:gchev1}
\end{align}
\end{cor}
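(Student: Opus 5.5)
The statement has two parts. First, every path $\bm \in \bM_{e}$ uses only Bruhat edges. Second, the displayed identity then follows from the second equality in \eqref{eq:gchev}, specialized to $w = e$.

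For the first part, fix $\bm \in \bM_{e}$ and put $v := \ed(\bm)$. I would use the reflection order $\lhd$ of Proposition~\ref{prop:ref-chain}, which has two properties. The $\beta$-labels $\beta_{a_1}, \dots, \beta_{a_s}$ appear with $a_1 > \cdots > a_s$, so they are $\lhd$-increasing by condition (1). The $\gamma$-labels $\gamma_{b_t}, \dots, \gamma_{b_1}$ appear with $b_t < \cdots < b_1$, so they are also $\lhd$-increasing. Moreover, every $\beta$ precedes every $\gamma$ by condition (2). Hence $\bm$ is a label-increasing path from $e$ to $v$, as noted in Remark~\ref{rem:bMw}. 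By Theorem~\ref{thm:label}~(1), it is therefore a shortest directed path from $e$ to $v$ in $\QBG(W)$.

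The key point, and the only nontrivial step, is that a shortest path from $e$ to $v$ has length exactly $\ell(v)$. There is a path of Bruhat edges of length $\ell(v)$: take a maximal chain in Bruhat order from $e$ to $v$, obtained from a reduced word by successive right multiplications by reflections that raise length by one. Conversely, every edge of $\QBG(W)$ raises length by at most one; quantum edges in fact lower it, since $\pair{\rho}{\alpha^{\vee}} \ge 1$. So any path from $e$ to $v$ has length at least $\ell(v)$. If a shortest path of length $\ell(v)$ contained a quantum edge, its total length gain would be strictly less than its number of edges, namely at most $\ell(v) - 2$, which is a contradiction. Hence all edges of $\bm$ are Bruhat edges and $\qwt(\bm) = 0$. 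Equivalently, one may quote the standard fact that $\qwt(e \Rightarrow v) = 0$, which follows from \cite[Lemma~1]{Pos}.

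For the identity, substitute $w = e$ and $\qwt(\bm) = 0$ into the second line of \eqref{eq:gchev}; this gives $\SQG{\ed(\bm)t_{0}} = \SQG{\ed(\bm)}$. It remains to justify $\LQG{\lng\lambda} = \LQG{\lng\lambda} \otimes \SQG{e}$. This holds because $\QG(e) = \QG$, so $\SQG{e}$ is the unit class, as already noted in Section~\ref{subsec:equivgroup}. I expect no real obstacle here; the only care needed is the length-counting argument in the previous paragraph.
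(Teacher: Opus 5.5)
Your proposal is correct and follows the paper's argument. By Proposition~\ref{prop:ref-chain}, each $\bm \in \bM_{e}$ is label-increasing, hence shortest by Theorem~\ref{thm:label}. A shortest path from $e$ to $v$ uses only Bruhat edges; the paper justifies this by just citing $e \le v$ in the Bruhat order, whereas you supply the length-counting argument. The identity is then the case $w=e$, $\qwt(\bm)=0$ of \eqref{eq:gchev}.
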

From equation~\eqref{eq:gchev1} in $K_{T}(\QG)$, 
we deduce that, in $K_{T}(X)$ (hence in $QK_{T}(X)$), 
\begin{equation} \label{eq:gchev2}
\CO(- \lambda) = \sum_{\bm \in \bM_{e}}
  (-1)^{\ell(\bm_{\gamma})} \be^{ \ed(\bm_{\beta}) \lambda }
  \CO^{ \ed(\bm) } 
\end{equation}
%
%
by making use of the commutative diagram~\eqref{eq:qdiagram}; 
recall that $\Phi(\be^{\mu}\CO^{w}\sQ^{\xi}) = \be^{-\mu}\SQG{ wt_{\xi} }$ 
for $\mu \in \Lambda$, $w \in W$, $\xi \in Q^{\vee,+}$, and that 
the quantum product $\star$ in $QK_{T}(X)$ becomes 
the ordinary product $\cdot$ in $K_{T}(X)$ under the specialization at $\sQ = 0$. 
Therefore, by Corollary~\ref{cor:gc1}, we obtain 
\begin{equation}\label{eq:minwt}
\Phi \biggl( \biggl(\prod_{ j \in I,\,
  \pair{\lambda}{\alpha_{j}^{\vee}} = - 1 }
\frac{1}{1-\sQ_{j}}\biggr) \CO(- \lambda) \biggr) = \LQG{\lng \lambda}.
\end{equation}
From this equation, again using the commutative diagram~\eqref{eq:qdiagram}, 
we obtain the first half of the following corollary 
by the same proof as that for \cite[Proposition~5.1]{MNS}; recall that $QK_{T}(X)$ is 
generated by the line bundle classes $\CO(- \varpi_i)$, $i \in I$, 
as a $K_{T}(\pt)\bra{\sQ}$-algebra equipped with the quantum product $\star$. 
Also, the second half follows from Theorem~\ref{thm:gc} by using the first half. 
%
%
\begin{cor} \label{cor:gc2}
The following holds in $K_{T}(\QG)${\rm :}
\begin{equation}
\Phi \biggl( \bullet \star 
\biggl(\prod_{ j \in I,\,
  \pair{\lambda}{\alpha_{j}^{\vee}} = - 1 }
\frac{1}{1-\sQ_{j}}\biggr) \CO(- \lambda) \biggr) = 
\Phi(\bullet) \otimes \LQG{\lng \lambda} 
\end{equation}
for an arbitrary element $\bullet$ of $QK_{T}(X)$. 
Hence, the following holds in $QK_{T}(X)${\rm :}
\begin{equation}
\CO(- \lambda) \star \CO^{w} = \sum_{\bm \in \bM_{w}}
  (-1)^{\ell(\bm_{\gamma})} \be^{ \ed(\bm_{\beta}) \lambda }
  \sQ^{\qwt(\bm)} \CO^{ \ed(\bm) } \quad \text{\rm for $w \in W$}. \label{eq:gchev3}
\end{equation}
\end{cor}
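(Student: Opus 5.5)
We plan to deduce the first half from \eqref{eq:minwt} and the diagram \eqref{eq:qdiagram}, following \cite[Proposition~5.1]{MNS}, and then obtain \eqref{eq:gchev3} by substituting $\bullet=\CO^{w}$ and comparing with Theorem~\ref{thm:gc}.

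For the first half, write $F:=\bigl(\prod_{j}\frac{1}{1-\sQ_j}\bigr)\CO(-\lambda)$, the product being over $j\in I$ with $\pair{\lambda}{\alpha_j^{\vee}}=-1$. We consider the two $K_T(\pt)\bra{\sQ}$-linear operators $L(\bullet):=\Phi^{-1}\bigl(\Phi(\bullet)\otimes\LQG{\lng\lambda}\bigr)$ and $R(\bullet):=\bullet\star F$ on $QK_T(X)$. Linearity of $L$ over $\sQ$ comes from \eqref{eq:push} together with $(\st_\xi\bullet)\otimes[\CO_{\QG}(\nu)]=\st_\xi(\bullet\otimes[\CO_{\QG}(\nu)])$, and continuity in the $\sQ$-adic topology is automatic on both sides.

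Since tensor product with line bundles on $K_T(\QG)$ is commutative and associative, $L$ commutes with the operators $\Phi^{-1}\circ(\bullet\otimes\LQG{\lng\varpi_k})\circ\Phi$. By \eqref{eq:qdiagram}, the latter are exactly the operators $\bullet\star\CO(-\varpi_k)$, for $k\in I$. Hence $L$ commutes with quantum multiplication by every element of the $K_T(\pt)\bra{\sQ}$-subalgebra generated by the $\CO(-\varpi_k)$, and this subalgebra is all of $QK_T(X)$. For any $\bullet$ we may therefore write $\bullet=\bullet\star1$ and obtain
\[
L(\bullet)=\bullet\star L(1)=\bullet\star F,
\]
where $L(1)=F$ is precisely \eqref{eq:minwt}, using $\Phi(1)=\SQG{e}$.

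The main obstacle is the step ``commutes with all multiplication operators implies $L$ is itself a multiplication operator''. It requires that the generation statement hold topologically, i.e. with $\sQ$-adic completion, and that $L$ be continuous. We plan to justify both by working modulo $\sQ^{N}$ for each $N$ and passing to the limit.

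For the second half, we apply $\Phi$ to $\CO^{w}\star F$. By the first half and Theorem~\ref{thm:gc}, the result is
\[
\Bigl(\prod_{j}\frac{1}{1-\st_j}\Bigr)\sum_{\bm\in\bM_w}(-1)^{\ell(\bm_\gamma)}\be^{-\ed(\bm_\beta)\lambda}\SQG{\ed(\bm)t_{\qwt(\bm)}}.
\]
By \eqref{eq:push} and $\Phi(\be^{\mu}\CO^{v}\sQ^{\xi})=\be^{-\mu}\SQG{vt_\xi}$, this equals
\[
\Phi\Bigl(\Bigl(\prod_{j}\frac{1}{1-\sQ_j}\Bigr)\sum_{\bm\in\bM_w}(-1)^{\ell(\bm_\gamma)}\be^{\ed(\bm_\beta)\lambda}\sQ^{\qwt(\bm)}\CO^{\ed(\bm)}\Bigr).
\]
Since $\Phi$ is injective, and since the factor $\prod_j(1-\sQ_j)^{-1}$ is a unit in $K_T(\pt)\bra{\sQ}$ that commutes with $\star$, we can cancel it from both sides and obtain \eqref{eq:gchev3}.
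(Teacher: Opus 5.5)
Your proposal is correct and follows the paper's route. For the first half you use the commutation argument of \cite[Proposition~5.1]{MNS}: tensoring with $\LQG{\lng\lambda}$ commutes with the operators $\bullet\star\CO(-\varpi_k)$ by \eqref{eq:qdiagram}, the classes $\CO(-\varpi_k)$ generate $QK_T(X)$, and \eqref{eq:minwt} identifies the value at $1$. For the second half you apply the first half to $\CO^w$, invoke Theorem~\ref{thm:gc}, and cancel the unit $\prod_j(1-\sQ_j)^{-1}$ using injectivity of $\Phi$, exactly as the paper does. Your reduction modulo $\sQ^N$ is a sound way to make the topological-generation and continuity step rigorous, which the paper leaves implicit in its citation.
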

From now until the end of this subsection, we assume that $G$ is of type $A_{n}$, i.e., $G = SL_{n+1}(\BC)$, and use the notation of \cite{MNS}. Note that all the fundamental weights $\varpi_{p} = \epsilon_1 + \cdots + \epsilon_p$, $1 \leq p \leq n$, are minuscule, 
where $\epsilon_k := \varpi_k - \varpi_{k-1}$ for $1 \leq k \leq n+1$; we set $\varpi_{0} := 0$ and $\varpi_{n+1} := 0$ by convention. 
Observe that equation~\eqref{eq:minwt} can be rephrased as follows:
\begin{equation}
\Phi(\CO(- \lambda)) = \biggl(\prod_{ j \in I,\,
  \pair{\lambda}{\alpha_{j}^{\vee}} = - 1 } (1-\st_{j})\biggr) \LQG{\lng \lambda}. 
\end{equation}
From this equation, for $0 \leq p \leq k \leq n+1$ and $J \subset [k] := \{1,2,\ldots,k\}$ such that $|J| = p$, we can easily deduce that 
\begin{equation}
\Phi(\CO(- \epsilon_{J})) = 
\left(\prod_{ 
\begin{subarray}{c}
1 \leq j \leq k \\[1mm]
j \notin J, j+1 \in J
\end{subarray}}
 (1-\st_{j})\right)
\LQG{ \lng \eps_{J} }, 
\end{equation}
where $\epsilon_J := \sum_{j \in J} \epsilon_j$ for $J \subset [k]$; 
see also \cite[Eqs.~(2.4) and (2.5)]{MNS2}.
Therefore, for $0 \leq p \leq k \leq n+1$, the image under $\Phi$ of the element
\begin{equation}
\sum_{
\begin{subarray}{c}
J \subset [k] \\[1mm]
|J|=p
\end{subarray}}
\CO(- \epsilon_{J}) \in K_{T}(X) \subset QK_{T}(X)
\end{equation}
is identical to the element 
\begin{equation}
\mathfrak{F}^{k}_{p} := 
 \sum_{
   \begin{subarray}{c}
   J \subset [k] \\[1mm]
   |J|=p
   \end{subarray}}
 \left(\prod_{ 
\begin{subarray}{c}
1 \leq j \leq k \\[1mm]
j \notin J, j+1 \in J
\end{subarray}}
 (1-\st_{j})\right)
\LQG{ \lng \eps_{J} } \in K_{T}(\QG), 
\end{equation}
introduced in \cite{MNS}; 
note that $\mathfrak{F}^{k}_{0}=1$. 
Here, we recall from \cite{MNS} the element 
\begin{equation}
\mathcal{F}^{k}_{p}:=
 \sum_{
   \begin{subarray}{c}
   J \subset [k] \\[1mm]
   |J|=p
   \end{subarray}} \, 
 \left(\prod_{ \begin{subarray}{c} 1 \le j \le k \\[1mm] j,\,j+1 \in J \end{subarray} }
 \frac{1}{1-Q_{j}}\right) 
 \sprod_{j \in J} \CO(-\eps_{j}) \in QK_{T}(X), 
\end{equation}
where $\sprod$ denotes the product with respect to the quantum product $\star$; 
note that $\mathcal{F}^{k}_{0} = 1$. 
Since it is shown in \cite[\S 5]{MNS} (see also \cite[Proposition~3.1]{MNS2}) that $\Phi(\mathcal{F}^k_p) = \mathfrak{F}^k_p$ for $0 \leq p \leq k \leq n+1$, the following equality follows from the injectivity of the map $\Phi$: 
\begin{equation}\label{eq:wedge} 
\mathcal{F}^{k}_{p} = 
\sum_{
\begin{subarray}{c}
J \subset [k] \\[1mm]
|J|=p
\end{subarray}}
\CO(- \epsilon_{J}) \in K_{T}(X) \subset QK_{T}(X) 
\end{equation}
for $0 \leq p \leq k \leq n+1$; 
cf. \cite[Proposition~5.4]{AHKMOX}. 

For the special case $k = n+1$, it is well-known (see, for example, \cite[Introduction]{FL}) that in $K_{T}(X) \subset QK_{T}(X)$, 
\begin{equation}
\sum_{
\begin{subarray}{c}
J \subset [n+1] \\[1mm]
|J|=p
\end{subarray}}
\CO(- \epsilon_{J})
= \sum_{
\begin{subarray}{c}
J \subset [n+1] \\[1mm]
|J|=p
\end{subarray}}
\be^{\epsilon_{J}}, 
\end{equation}
which implies that 
\begin{equation}
\mathcal{F}^{n+1}_{p} = 
\sum_{
\begin{subarray}{c}
J \subset [n+1] \\[1mm]
|J|=p
\end{subarray}}
\be^{\epsilon_{J}} \in K_{T}(X) \subset QK_{T}(X)
\end{equation} 
for $0 \leq p \leq n+1$. 
Thus, we have re-proved \cite[Theorem~5.4]{MNS}. 
%
%
\subsection{Main results.}
\label{subsec:main}

Let $\lambda \in W\vpi_{i}$ be a minuscule weight, and let $x$ and $y$ be as 
in Section~\ref{subsec:chain}. Fix reduced expressions for $x$ and $y$, and 
define $\beta_{c}$, $1 \le c \le a = \ell(x)$, and $\gamma_{d}$, $1 \le d \le b=\ell(y)$, 
as in \eqref{eq:beta} and \eqref{eq:gamma}, respectively. 
Also, fix an arbitrary reflection order $\lhd$ satisfying 
the conditions in Proposition~\ref{prop:ref-chain}. 

Now, let $w,u \in W$, and $d \in Q^{\vee,+}$. We define $\bM_{w}$ as in Section~\ref{subsec:gc}. 
By \eqref{eq:qmulti} and \eqref{eq:gchev3}, we see that 
\begin{align*}
\sum_{\xi \in Q^{\vee,+}}
\sQ^{\xi} \langle \CO(- \lambda),\CO^{w},\CO_{u} \rangle_{\xi} & = 
\sum_{\xi \in Q^{\vee,+}}
\sQ^{\xi} \langle \CO(- \lambda) \star \CO^{w},\CO_{u} \rangle_{\xi} \\
& = 
\sum_{\xi \in Q^{\vee,+}}
\sQ^{\xi} \left \langle 
\sum_{ v \in W,\,\zeta \in Q^{\vee,+} } 
\sS_{\lambda,w}^{v,\zeta} \sQ^{\zeta} \CO^{v},\,\CO_{u} \right\rangle_{\xi} \\
& = \sum_{v \in W,\,\xi,\zeta \in Q^{\vee,+}}
\sS_{\lambda,w}^{v,\zeta} \sQ^{\zeta+\xi} \twp{v}{u}_{\xi},
\end{align*}
where $\sS_{\lambda,w}^{v,\zeta} \in \BZ[P]$ is defined by: 
\begin{equation*}
\CO(- \lambda) \star \CO^{w} = \sum_{v \in W,\,\zeta \in Q^{\vee,+} }
\underbrace{ \sS_{\lambda,w}^{v,\zeta} }_{\in \, \BZ[P]} \sQ^{\zeta} \CO^{v}. 
\end{equation*}
By comparing the coefficients of $\sQ^{d}$ 
on the leftmost-hand side and the rightmost-hand side, we deduce that 
\begin{equation} \label{eq:3pt-lam1}
\langle \CO(- \lambda),\CO^{w},\CO_{u} \rangle_{d} = 
\sum_{v \in W,\, \zeta \in Q^{\vee,+} }
\underbrace{ \sS_{\lambda,w}^{v,\zeta} }_{\in \, \BZ[P]} \twp{v}{u}_{d-\zeta}, 
\end{equation}
where we know from \cite[Lemma~4.1]{LNSX} that 
\begin{align*}
\twp{v}{u}_{d-\zeta} & = 
 \begin{cases}
 1 & \text{if $d-\zeta \ge \qwt(v \Rightarrow u)$}, \\
 0 & \text{otherwise}.
 \end{cases}
\end{align*}
From \eqref{eq:gchev3} and \eqref{eq:3pt-lam1}, we deduce that 
\begin{equation} \label{eq:3pt-lam2}
\langle \CO(- \lambda),\CO^{w},\CO_{u} \rangle_{d} = 
\sum_{ 
  \begin{subarray}{c} 
  \bm \in \bM_{w} \\
  \qwt(\ed(\bm) \Rightarrow u) \le d - \qwt(\bm)
  \end{subarray} } (-1)^{\ell(\bm_{\gamma})} \be^{\ed(\bm_{\beta})\lambda}. 
\end{equation}
Furthermore, we can obtain a cancellations-free formula (in fact, with ``non-negative'' coefficients) 
for $\langle \CO(- \lambda),\CO^{w},\CO_{u} \rangle_{d}$ from equation \eqref{eq:3pt-lam2} as follows. 
For $\bm \in \bM_{w}$, denote by $\bq_{\bm}=\bq_{\bm,u}$ 
the (unique) label-decreasing directed path from $\ed(\bm)$ to $u$ with respect to 
the reflection order $\lhd$ (fixed at the beginning of this subsection); 
note that $\qwt(\ed(\bm) \Rightarrow u) = \qwt(\bq_{\bm})$. 
Denote by $\iota(\bq_{\bm})$ the label of the initial edge of $\bq_{\bm}$; 
if $\bq_{\bm}$ is the trivial directed path of length $0$, 
then we set $\iota(\bq_{\bm}) := \bzero$, where $\bzero$ is a formal symbol 
not contained in $\Delta$; we understand that 
$\bzero \lhd \alpha$ for all $\alpha \in \Delta^{+}$.
We set 
\begin{equation}
\bM_{w,u,d}:=\bigl\{ \bm \in \bM_{w} \mid 
\qwt(\ed(\bm) \Rightarrow u) \le d - \qwt(\bm) \bigr\}, 
\end{equation}
%
%
\begin{equation} \label{eq:def-bfR}
\bfR_{w,u,d}:=\left\{ \bm \in \bM_{w,u,d} \ \Biggm| \ 
\begin{array}{l}
\ell(\bm_{\gamma}) = 0, \, \iota(\bq_{\bm}) \notin \Delta_{1}^{+}, \\[2mm] 
\pair{\vpi_{k}}{\qwt(\ed(\bm) \Rightarrow u)} = \pair{\vpi_{k}}{d - \qwt(\bm)} \\[1mm]
\text{for all $k \in I$ such that $\pair{\lambda}{\alpha_{k}^{\vee}} = 1$}
\end{array} \right\}. 
\end{equation}
%
%
\begin{thm} \label{thm:3pt-lam}
Keep the notation and setting above. We have 
\begin{equation}\label{eq:3ptkgw}
\langle \CO(- \lambda),\CO^{w},\CO_{u} \rangle_{d} = 
\sum_{\bm \in \bfR_{w,u,d}} \be^{\ed(\bm_{\beta})\lambda}. 
\end{equation}
\end{thm}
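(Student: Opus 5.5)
We start from \eqref{eq:3pt-lam2}, which already gives
\[
\langle \CO(- \lambda),\CO^{w},\CO_{u} \rangle_{d} = \sum_{\bm \in \bM_{w,u,d}} (-1)^{\ell(\bm_{\gamma})} \be^{\ed(\bm_{\beta})\lambda}.
\]
It therefore remains to construct a sign-reversing involution on $\bM_{w,u,d} \setminus \bfR_{w,u,d}$ that preserves $\ed(\bm_{\beta})$. The natural moves act on the $\gamma$-part: either append a $\gamma$-edge at the end of $\bm$, or remove the last $\gamma$-edge of $\bm$. Since $\bm$ is label-increasing with respect to $\lhd$ (Remark~\ref{rem:bMw}), the $\gamma$-labels appear last in $\lhd$ by Proposition~\ref{prop:ref-chain}. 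Both moves leave $\bm_{\beta}$, and hence the weight $\be^{\ed(\bm_{\beta})\lambda}$, unchanged, and they change $\ell(\bm_{\gamma})$ by one.

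\textbf{The involution.} For $\bm\in\bM_{w,u,d}$, compare the last label of $\bm_\gamma$ with $\iota(\bq_{\bm})$.
\begin{itemize}
\item[(i)] If $\iota(\bq_{\bm})=\gamma\in\Delta_1^+$ and $\gamma$ is larger than all labels of $\bm$, we append the first edge of $\bq_{\bm}$ to $\bm$. This gives $\bm'\in\bM_w$ with one more $\gamma$-edge.
\item[(ii)] Otherwise, if $\ell(\bm_\gamma)>0$, we remove the last edge of $\bm$. Call the result $\bm'$. By the uniqueness of label-decreasing paths (Theorem~\ref{thm:label}(2)) together with the fact that concatenating the removed edge with $\bq_{\bm}$ stays label-decreasing, $\bq_{\bm'}$ is that edge followed by $\bq_{\bm}$.
\end{itemize}
For (ii) to be well defined, I must check this concatenation claim: the removed label is some $\gamma_{b_1}$ exceeding every $\beta$- and $\alpha$-label. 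The problem is when $\bq_{\bm}$ begins with a $\gamma$-label larger than $\gamma_{b_1}$; that case must be routed to (i) instead. Consequently, (i) should be defined via the first edge of $\bq_{\bm}$ having a label in $\Delta_1^+$ greater than the last $\gamma$-label. With this refinement the two moves are mutually inverse.

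\textbf{Preserving $\bM_{w,u,d}$.} In both moves the total quantum weight $\qwt(\bm)+\qwt(\ed(\bm)\Rightarrow u)$ is unchanged, because the moved edge just transfers between $\bm$ and $\bq_{\bm}$. Hence the inequality defining $\bM_{w,u,d}$ is preserved.

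\textbf{Fixed points and the main obstacle.} The fixed points are the paths with $\ell(\bm_\gamma)=0$ and $\iota(\bq_{\bm})\notin\Delta_1^+$. This matches the first two conditions of $\bfR_{w,u,d}$ but not the third, the equality $\pair{\vpi_k}{\qwt(\ed(\bm)\Rightarrow u)}=\pair{\vpi_k}{d-\qwt(\bm)}$ for $k$ with $\pair{\lambda}{\alpha_k^\vee}=1$. This is the main obstacle. I expect it to come from the factor $\prod(1-\sQ_j)^{-1}$ phenomenon in reverse, and two ingredients are needed.
\begin{itemize}
\item[(a)] Show that, for fixed $\bm_\beta$ and $\ell(\bm_\gamma)=0$, the only $\gamma$-direction freedom in $d$ is along the $\alpha_k^\vee$ with $\pair{\lambda}{\alpha_k^\vee}=1$. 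This should follow because quantum $\gamma$-edges have $\pair{\lambda}{\gamma^\vee}=1$, so their coroots have positive $\vpi_k$-coefficient only for such $k$.
\item[(b)] Refine the involution. When there is slack, i.e.\ $\pair{\vpi_k}{d-\qwt(\bm)}>\pair{\vpi_k}{\qwt(\ed(\bm)\Rightarrow u)}$ for some such $k$, pair $\bm$ with $\bm$ extended by the quantum edge realizing this slack. I would try to show such an edge exists as a $\gamma$-edge from $\ed(\bm)$, using Proposition~\ref{prop:LNOS41} and the quantum Bruhat graph's characterization of quantum edges, e.g.\ via a $\gamma$ with $\gamma^\vee$ whose $\alpha_k^\vee$-coefficient is $1$.
\end{itemize}

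An alternative route to (b) is to compute the inner sum over $\bm_\gamma$ with fixed $\bm_\beta$ directly as a telescoping sum in $d$. The motivation is that $\sum_{\bm\in\bM_w}(-1)^{\ell(\bm_\gamma)}$ restricted to paths with a fixed endpoint relates to the product $\prod(1-\st_j)$ in Corollary~\ref{cor:gc2}. Which of (b) or this telescoping argument works is where I would spend most of the effort.
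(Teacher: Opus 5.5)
Your starting point \eqref{eq:3pt-lam2} is right, and so is your transfer move: append the first edge of $\bq_{\bm}$ when its label lies in $\Delta_1^+$ and is $\rhd\kappa(\bm_\gamma)$, otherwise delete the last $\gamma$-edge. This is the paper's involution $\Psi$ (Cases 1 and 2). However, the paper applies it only to a residual set, and as a global involution it fails in the case $\kappa(\bm_\gamma)=\iota(\bq_{\bm})$, which you never treat. If $v\edge{\beta}vs_\beta\edge{\beta}v$ is a path in $\QBG(W)$, one edge is Bruhat and the other quantum, so $\pair{\rho}{\beta^\vee}=1$ and $\beta=\alpha_k$ is simple. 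Deleting the last edge of $\bm$ then does not transfer it into $\bq$; it removes a backtrack. The new $\bq$ is $\bq_{\bm}$ minus its first edge, and $\qwt(\bm)+\qwt(\ed(\bm)\Rightarrow u)$ drops by $\alpha_k^\vee$. Concretely, in the appendix example ($\lambda=\eps_2$, $w=u=s_2$), $\bm: s_2\Qe{\alpha_2}e$ has $\bq_{\bm}: e\Be{\alpha_2}s_2$. Your rule (ii) sends it to the trivial path at $s_2$, which your rules leave fixed. So your claims that the total quantum weight is unchanged and that the two moves are mutually inverse are false here.

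The main gap is the third condition in $\bfR_{w,u,d}$, which you leave open. Your justification for (a) is wrong: in the same example $\gamma_1=\alpha_2+\alpha_3$ has $\pair{\eps_2}{\gamma_1^\vee}=1$ and $\pair{\vpi_3}{\gamma_1^\vee}=1$, although $\pair{\eps_2}{\alpha_3^\vee}=0$. It also labels quantum edges, e.g.\ $s_2s_3s_2\Qe{\alpha_{23}}e$. Your (b) looks for the wrong edge. The missing idea is that both defects are the same phenomenon: the partner of a path with slack is exactly a backtracking path as above. Take $k$ with $\alpha_k\in\Delta_1^+$, $\pair{\vpi_k}{\qwt(\ed(\bm)\Rightarrow u)}<\pair{\vpi_k}{d-\qwt(\bm)}$ and $\kappa(\bm_\gamma)\lhd\alpha_k\rhd\iota(\bq_{\bm})$. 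Append $z\edge{\alpha_k}zs_k$ to $\bm$ and prepend the reverse edge $zs_k\edge{\alpha_k}z$ to $\bq_{\bm}$. Both edges always exist because $\alpha_k$ is simple, and exactly one of them is quantum, so the appended edge may well be Bruhat, with the slack consumed on the $\bq$ side. Label monotonicity is kept, and the total quantum weight rises by exactly $\alpha_k^\vee$. In the example, for $d_2\ge 1$ the trivial path at $s_2$ satisfies your fixed-point conditions but not the slack condition, and it cancels against $s_2\Qe{\alpha_2}e$ in exactly this way.

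Several such $k$ can occur at once, so a single edge does not give a pairing. The paper collects them into $K_{\bm}$ and decomposes $\bM_{w,u,d}\setminus\bfR_{w,u,d}$ into cubes $\{\bm\ast\bk\mid\bk\in\bK_{\bm}\}$, indexed by $\lhd$-increasing sequences from $K_{\bm}$. The base points $\bm$ are those with $\kappa(\bm_\gamma)\ne\iota(\bq_{\bm})$ or both equal to $\bzero$, and the signed sum over each cube vanishes when $K_{\bm}\ne\emptyset$. Only then does it apply the transfer involution, on the set where $K_{\bm}=\emptyset$. There it checks that the involution preserves $K_{\bm}=\emptyset$ and has no fixed points, since a base point with $\ell(\bm_\gamma)=0$, $\iota(\bq_{\bm})\notin\Delta_1^+$ and some slack would have $K_{\bm}\ne\emptyset$. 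Without this layer, or a worked-out version of your telescoping alternative, your argument does not reach the stated formula.
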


\begin{proof}
By \eqref{eq:3pt-lam2}, we see that 
\begin{equation}
\langle \CO(- \lambda),\CO^{w},\CO_{u} \rangle_{d} = 
\sum_{ \bm \in \bM_{w,u,d} }(-1)^{\ell(\bm_{\gamma})} \be^{\ed(\bm_{\beta})\lambda}. 
\end{equation}
In order to prove the theorem, it suffices to show that 
\begin{equation}
\sum_{ \bm \in \bB_{w,u,d} }(-1)^{\ell(\bm_{\gamma})} \be^{\ed(\bm_{\beta})\lambda} = 0,
\end{equation}
where 
\begin{align}
\bB_{w,u,d} & := \bM_{w,u,d} \setminus \bfR_{w,u,d} \nonumber \\
& = \left\{ \bm \in \bM_{w,u,d} \ \Biggm| \ 
\begin{array}{l}
\ell( \bm_{\gamma}) > 0, \text{ or } \iota(\bq_{\bm}) \in \Delta_{1}^{+}, \text{ or }\\[2mm] 
\pair{\vpi_{k}}{\qwt(\ed(\bm) \Rightarrow u)} < \pair{\vpi_{k}}{d - \qwt(\bm)} \\[1mm]
\text{for some $k \in I$ such that $\pair{\lambda}{\alpha_{k}^{\vee}} = 1$}
\end{array} \right\}. 
\end{align}
Let $\bm \in \bB_{w,u,d}$, and set $z : = \ed(\bm)$. 
Denote by $\kappa(\bm_{\gamma})$ the label of the final edge of $\bm_{\gamma}$; 
if $\bm_{\gamma}$ is the trivial directed path of length $0$, 
then we set $\kappa(\bm_{\gamma}):=\bzero$.
Set $\psi:=\iota(\bq_{\bm}) \in \Delta^{+} \sqcup \{\bzero\}$ and 
$\vp:=\kappa(\bm_{\gamma})  \in \Delta^{+} \sqcup \{\bzero\}$. 
For $\bm \in \bB_{w,u,d}$, we define 
\begin{equation*}
K_{\bm}:=\left\{ k \in I \ \Biggm| \ 
 \begin{array}{l}
 \alpha_{k} \in \Delta_{1}^{+}, 
 \kappa(\bm_{\gamma})=\vp \lhd \alpha_{k} \rhd \psi = \iota(\bq_{\bm}), \\[1mm]
 \pair{\vpi_{k}}{\qwt(\ed(\bm) \Rightarrow u)} < \pair{\vpi_{k}}{d - \qwt(\bm)}
 \end{array} \right\}. 
\end{equation*}
%
%
\begin{claim} \label{c:3pta}
Assume that $K_{\bm} \ne \emptyset$,  and let $k \in K_{\bm}$. 
If $\bm$ is of the form: 
\begin{equation*}
\underbrace{w \edge{\bullet} \cdots \edge{\bullet} z}_{= \, \bm} = 
\underbrace{\ed(\bm) \edge{\bullet} \cdots \edge{\bullet} u}_{= \, \bq_{\bm}}, 
\end{equation*}
then we define $\Phi(\bm)$ as: 
\begin{equation*}
\underbrace{\overbrace{ w \edge{\bullet} \cdots \edge{\bullet} z}^{= \, \bm} 
\edge{\alpha_{k}} zs_{k}}_{=: \, \Phi(\bm)} = 
\underbrace{\ed(\Phi(\bm)) \edge{\alpha_{k}} 
 \overbrace{z \edge{\bullet} \cdots \edge{\bullet} u}^{= \, \bq_{\bm}} }_{=: \, \bq}. 
\end{equation*}
Then, $\Phi(\bm) \in \bB_{w,u,d}$. 
\end{claim}

\noindent
{\it Proof of Claim~\ref{c:3pta}.}
If the final edge of $\Phi(\bm)$ (labeled by $\alpha_{k}$) is a Bruhat edge, 
then the initial edge of $\bq_{\Phi(\bm)}$ (labeled by $\alpha_{k}$) is a quantum edge. 
Since $\kappa(\bm_{\gamma})=\vp \lhd \alpha_{k}$, 
it follows that $\Phi(\bm)$ is a label-increasing directed path with respect to $\lhd$, 
and hence is a shortest directed path from $w$ to $zs_{k}$. 
Hence, we have $\qwt(\Phi(\bm)) = \qwt(\bm)$. 
Also, by the assumption that $\alpha_{k} \in \Delta_{1}^{+}$, we find that 
$\Phi(\bm) \in \bM_{w}$. 
Similarly, since $\alpha_{k} \rhd \psi = \iota(\bq_{\bm})$, 
it follows that $\bq$ is a label-decreasing directed path with respect to $\lhd$. 
Hence, we have $\bq=\bq_{\Phi(\bm)}$, and 
$\qwt(\ed(\Phi(\bm)) \Rightarrow u) = 
 \qwt(\ed(\bm) \Rightarrow u) + \alpha_{k}^{\vee}$. 
Since $k \in K_{\bm}$ and $\qwt(\ed(\bm) \Rightarrow u) \le d - \qwt(\bm)$, 
we see that $\qwt(\ed(\bm) \Rightarrow u) + \alpha_{k}^{\vee} \le d - \qwt(\bm)$. 
Therefore, we deduce that 
\begin{equation*}
\qwt(\ed(\Phi(\bm)) \Rightarrow u) = 
\qwt(\ed(\bm) \Rightarrow u) + \alpha_{k}^{\vee} \le d - \qwt(\bm)
= d - \qwt(\Phi(\bm)). 
\end{equation*}
Hence, we conclude that $\Phi(\bm) \in \bM_{w,u,d}$; 
it is obvious that $\Phi(\bm) \in \bB_{w,u,d}$ 
since $\ell( \Phi(\bm)_{\gamma}) > 0$ and 
$\iota(\bq_{\Phi(\bm)}) = \alpha_{k} \in \Delta_{1}^{+}$. 

Similarly, if the final edge of $\Phi(\bm)$ (labeled by $\alpha_{k}$) is a quantum edge, 
then the initial edge of $\bq$ (labeled by $\alpha_{k}$) is a Bruhat edge. 
As above, we see that $\Phi(\bm) \in \bM_{w}$, $\bq=\bq_{\Phi(\bm)}$, and 
\begin{equation*}
\begin{split}
& \qwt(\ed(\Phi(\bm)) \Rightarrow u) = 
  \qwt(\ed(\bm) \Rightarrow u), \\
& \qwt(\Phi(\bm)) = \qwt(\bm) + \alpha_{k}^{\vee}. 
\end{split}
\end{equation*}
Also, as seen above, we have 
$\qwt(\ed(\bm) \Rightarrow u) + \alpha_{k}^{\vee} \le d - \qwt(\bm)$, 
and hence $\qwt(\ed(\bm) \Rightarrow u) \le d - ( \qwt(\bm)+\alpha_{k}^{\vee})$. 
Therefore, we deduce that 
\begin{equation*}
\qwt(\ed(\Phi(\bm)) \Rightarrow u) = 
\qwt(\ed(\bm) \Rightarrow u) \le d - (\qwt(\bm)+\alpha_{k}^{\vee}) 
= d - \qwt(\Phi(\bm)). 
\end{equation*}
Hence, we conclude that $\Phi(\bm) \in \bM_{w,u,d}$;
it is obvious that $\Phi(\bm) \in \bB_{w,u,d}$ 
since $\ell( \Phi(\bm)_{\gamma} ) > 0$ and 
$\iota(\bq_{ \Phi(\bm) }) = \alpha_{k} \in \Delta_{1}^{+}$. 
This proves Claim~\ref{c:3pta}. \bqed

\medskip

\begin{claim} \label{c:3ptb}
If $\kappa(\bm_{\gamma}) = \vp = \psi = \iota(\bq_{\bm}) \ne \bzero$, 
then $\vp=\psi$ is a simple root. In this case, 
if $\bm$ is of the form: 
\begin{equation*}
\underbrace{w \edge{\bullet} \cdots \edge{\bullet} zs_{k} \edge{\vp = \alpha_{k}} z}_{= \, \bm} = 
\underbrace{\ed(\bm) \edge{\psi = \alpha_{k}} zs_{k} \edge{\bullet} \cdots \edge{\bullet} u}_{= \, \bq_{\bm}}, 
\end{equation*}
then we define $\Phi(\bm)$ as 
\begin{equation*}
\underbrace{w \edge{\bullet} \cdots \edge{\bullet} zs_{k}}_{=: \, \Phi(\bm)} = 
\underbrace{\ed(\Phi(\bm)) \edge{\bullet} \cdots \edge{\bullet} u}_{=: \, \bq}. 
\end{equation*}
Then, $\Phi(\bm) \in \bB_{w,u,d}$ and $k \in K_{\Phi(\bm)}$. 
\end{claim}

\noindent
{\it Proof of Claim~\ref{c:3ptb}.}
If $y \edge{\beta} ys_{\beta} \edge{\beta} y$ is a directed path in $\QBG(W)$ 
for $y \in W$ and $\beta \in \Delta^{+}$, 
then $\beta$ is a simple root. Indeed, if $y \edge{\beta} ys_{\beta}$ is a Bruhat edge 
(resp., quantum edge), then $ys_{\beta} \edge{\beta} y$ is a quantum edge (resp., Bruhat edge). 
Therefore, by length consideration, we deduce that $2\pair{\rho}{\beta^{\vee}} - 1 = 1$, 
and hence $\pair{\rho}{\beta^{\vee}} = 1$, which implies that $\beta$ is a simple root. 
By using this fact, we find that $\vp=\psi$ is a simple root.

If the final edge of $\bm$ (labeled by $\vp=\alpha_{k}$) is a Bruhat edge, 
then the initial edge of $\bq_{\bm}$ (labeled by $\psi = \alpha_{k}$) is a quantum edge. 
We see that $\Phi(\bm) \in \bM_{w}$, $\bq=\bq_{\Phi(\bm)}$, and that 
\begin{equation*}
\begin{split}
& \qwt(\ed(\Phi(\bm)) \Rightarrow u) = 
  \qwt(\ed(\bm) \Rightarrow u) - \alpha_{k}^{\vee}, \\
& \qwt(\Phi(\bm)) = \qwt(\bm). 
\end{split}
\end{equation*}
Since $\qwt(\ed(\bm) \Rightarrow u) \le d - \qwt(\bm)$ by the assumption, 
it follows that $\qwt(\ed(\Phi(\bm)) \Rightarrow u) \le d - \qwt(\Phi(\bm))$. 
Thus, we have shown that $\Phi(\bm) \in \bM_{w,u,d}$. Also, we see that 
\begin{align*}
& \pair{\vpi_{k}}{\qwt(\ed(\Phi(\bm)) \Rightarrow u)} = 
  \pair{\vpi_{k}}{\qwt(\ed(\bm) \Rightarrow u)} - 1 \\ 
& < \pair{\vpi_{k}}{\qwt(\ed(\bm) \Rightarrow u)} \le \pair{\vpi_{k}}{d - \qwt(\bm)} \\
& = \pair{\vpi_{k}}{d - \qwt(\Phi(\bm))}.
\end{align*}
Hence, we conclude that $\Phi(\bm) \in \bB_{w,u,d}$ and $k \in K_{\Phi(\bm)}$. 

Similarly, if the final edge of $\bm$ (labeled by $\vp=\alpha_{k}$) is 
a quantum edge, then the initial edge of $\bq_{\bm}$ (labeled by $\psi = \alpha_{k}$) 
is a Bruhat edge. We see that $\Phi(\bm) \in \bM_{w}$, $\bq=\bq_{\Phi(\bm)}$, and that 
\begin{equation*}
\begin{split}
& \qwt(\ed(\Phi(\bm)) \Rightarrow u) = 
  \qwt(\ed(\bm) \Rightarrow u), \\
& \qwt(\Phi(\bm)) = \qwt(\bm) - \alpha_{k}^{\vee}. 
\end{split}
\end{equation*}
Since $\qwt(\ed(\bm) \Rightarrow u) \le d - \qwt(\bm)$ by the assumption, 
it follows that $\qwt(\ed(\Phi(\bm)) \Rightarrow u) \le d - \qwt(\Phi(\bm))$. 
Thus, we have shown that $\Phi(\bm) \in \bM_{w,u,d}$, as desired. 
Also, we see that 
\begin{align*}
& \pair{\vpi_{k}}{\qwt(\ed(\Phi(\bm)) \Rightarrow u)} = 
  \pair{\vpi_{k}}{\qwt(\ed(\bm) \Rightarrow u)} \\
& \le \pair{\vpi_{k}}{d - \qwt(\bm)} 
  = \pair{\vpi_{k}}{d - \qwt(\Phi(\bm))} -1 \\
& < \pair{\vpi_{k}}{d - \qwt(\Phi(\bm))}. 
\end{align*}
Hence, we conclude that $\Phi(\bm) \in \bB_{w,u,d}$ and $k \in K_{\Phi(\bm)}$. 
This proves Claim~\ref{c:3ptb}. \bqed

\medskip

Now, we set
\begin{equation}
\bC_{w,u,d}:=\bigl\{
 \bm \in \bB_{w,u,d} \mid 
 \kappa(\bm_{\gamma}) \ne \iota(\bq_{\bm}) \text{ or } 
 \kappa(\bm_{\gamma}) = \iota(\bq_{\bm}) = \bzero \bigr\}.
\end{equation}
For each $\bm \in \bC_{w,u,d}$, let $\bK_{\bm}$ be the set of 
all sequences $\bk = (k_{1},\dots,k_{t})$ of elements in $K_{\bm}$, 
where $0 \le t \le \# K_{\bm}$, 
such that $\alpha_{k_{1}} \lhd \cdots \lhd \alpha_{k_{t}}$. 
For $\bk = (k_{1},\dots,k_{t}) \in \bK_{\bm}$, we define $\bm \ast \bk$ as: 
\begin{align*}
& \underbrace{\overbrace{ w \edge{\bullet} \cdots \edge{\bullet} z }^{= \, \bm} 
\edge{\alpha_{k_1}} \bullet \edge{\alpha_{k_2}} \cdots 
\edge{\alpha_{k_{t-1}}} \bullet \edge{\alpha_{k_t}} zs_{k_1}s_{k_2} \cdots s_{k_t} }_{=: \, \bm \ast \bk} \\
& = \underbrace{\ed(\bm \ast \bk) \edge{\alpha_{k_t}} \bullet \edge{\alpha_{k_{t-1}}} \cdots 
\edge{\alpha_{k_2}} \bullet \edge{\alpha_{k_1}} 
\overbrace{z \edge{\bullet} \cdots \edge{\bullet} u}^{\bq_{\bm}}}_{= \, \bq_{\bm \ast \bk}}; 
\end{align*}
remark that $\bm \ast \bk$ is a label-increasing directed path. 
By Claims~\ref{c:3pta} and \ref{c:3ptb}, we find that
\begin{equation}
\bB_{w,u,d} = \bigsqcup_{ \bm \in \bC_{w,u,d} }
\bigl\{ \bm \ast \bk \mid \bk \in \bK_{\bm} \bigr\}. 
\end{equation}
Also, it is easily verified that for $\bm \in \bC_{w,u,d}$ 
with $K_{\bm} \ne \emptyset$, 
\begin{equation}
\sum_{ \bk \in \bK_{\bm} }(-1)^{\ell( (\bm \ast \bk)_{\gamma})} \be^{\ed( (\bm \ast \bk)_{\beta} )\lambda} =
(-1)^{\ell(\bm_{\gamma})} \be^{ \ed(\bm_{\beta})\lambda } 
\sum_{ \bk \in \bK_{\bm} }(-1)^{\ell(\bk)} = 0.
\end{equation}
Therefore, we deduce that 
\begin{equation} \label{eq:3ptx}
\sum_{ \bm \in \bB_{w,u,d} }(-1)^{\ell(\bm_{\gamma})} \be^{\ed(\bm_{\beta})\lambda} = 
\sum_{ \begin{subarray}{c} \bm \in \bC_{w,u,d} \\ K_{\bm}=\emptyset \end{subarray} }
(-1)^{\ell(\bm_{\gamma})} \be^{\ed(\bm_{\beta})\lambda}.
\end{equation}
In order to show that the right-hand side of \eqref{eq:3ptx} is equal to $0$, 
we will define a sijection (i.e., a sign-reversing bijection) $\Psi$ on the set
\begin{equation}
\bC_{w,u,d}^{\emptyset}:=
\bigl\{ \bm \in \bC_{w,u,d} \mid K_{\bm} = \emptyset \bigr\}. 
\end{equation}
Let $\bm \in \bC_{w,u,d}^{\emptyset}$; recall that 
$\vp=\kappa(\bm_{\gamma}) \in \Delta_{1}^{+} \sqcup \{\bzero\}$, 
$\psi=\iota(\bq_{\bm}) \in \Delta^{+} \sqcup \{\bzero\}$, and $z=\ed(\bm)$. 
We claim that $\vp \ne \bzero$ (i.e., $\ell(\bm_{\gamma}) > 0$) or 
$\psi \in \Delta_{1}^{+}$. Suppose, for a contradiction, that 
$\vp = \bzero$ and $\psi \notin \Delta_{1}^{+}$. By the definition of 
$\bB_{w,u,d}$, there exists $k \in I$ such that 
$\alpha_{k} \in \Delta_{1}^{+}$ and 
$\pair{\vpi_{k}}{\qwt(\ed(\bm) \Rightarrow u)} < 
\pair{\vpi_{k}}{d - \qwt(\bm)}$. Since $\psi = \bzero$ or 
$\psi \in \Delta_{-1}^{+} \sqcup \Delta_{0}^{+}$, we deduce that 
$k \in K_{\bm}$, which contradicts the assumption that $\bm \in \bC_{w,u,d}^{\emptyset}$. 

\medskip

\paragraph{\bf Case 1.}
%
Assume that $\kappa(\bm_{\gamma}) = \vp \rhd \psi = \iota(\bq_{\bm})$; 
note that $\vp \ne \bzero$ in this case.  
We define $\Psi(\bm)$ to be the directed path obtained from $\bm$ by removing the final edge.
Notice that $\Psi(\bm) \in \bM_{w}$ with $\ed(\Psi(\bm))=zs_{\vp}$, and that 
$\bq_{\Psi(\bm)}$ is identical to the directed path obtained 
from $\bq_{\bm}$ by adding an edge labeled by $\vp$ at the beginning of $\bq_{\bm}$; 
that is, 
\begin{equation*}
\underbrace{w \edge{\bullet} \cdots \edge{\bullet} zs_{\vp} \edge{\vp} z}_{= \, \bm} = 
\underbrace{\ed(\bm) \edge{\psi} \cdots \edge{\bullet} u}_{= \, \bq_{\bm}}, 
\end{equation*}
and
\begin{equation*}
\underbrace{w \edge{\bullet} \cdots \edge{\bullet} zs_{\vp}}_{= \, \Psi(\bm)} = 
\underbrace{\ed(\Psi(\bm)) \edge{\vp} z = \ed(\bm) \edge{\psi} \cdots \edge{\bullet} u}_{= \, 
\bq_{\Psi(\bm)}}. 
\end{equation*}
We claim that $\Psi(\bm) \in \bM_{w,u,d}$. 
If the final edge of $\bm$ (labeled by $\vp$) is a Bruhat edge, then we have
\begin{equation} \label{eq:case1a}
\begin{split}
& \qwt(\ed(\Psi(\bm)) \Rightarrow u) = \qwt(\ed(\bm) \Rightarrow u), \\
& \qwt(\Psi(\bm)) = \qwt(\bm).
\end{split}
\end{equation}
Since $\qwt(\ed(\bm) \Rightarrow u) \le d - \qwt(\bm)$ by the assumption, 
it follows that $\qwt(\ed(\Psi(\bm)) \Rightarrow u) \le d - \qwt(\Psi(\bm))$. 
If the final edge of $\bm$ (labeled by $\vp$) is 
a quantum edge, then we have
\begin{equation} \label{eq:case1b}
\begin{split}
& \qwt(\ed(\Psi(\bm)) \Rightarrow u) = 
  \qwt(\ed(\bm) \Rightarrow u) + \vp^{\vee}, \\
& \qwt(\Psi(\bm)) = \qwt(\bm) - \vp^{\vee}.
\end{split}
\end{equation}
Since $\qwt(\ed(\bm) \Rightarrow u) \le d - \qwt(\bm)$ by the assumption, 
it follows that $\qwt(\ed(\Psi(\bm)) \Rightarrow u) \le d - \qwt(\Psi(\bm))$. 
Thus, we have shown that $\Psi(\bm) \in \bM_{w,u,d}$, as desired. 
Also, we see that $\Psi(\bm) \in \bB_{w,u,d}$ 
since $\iota(\bq_{\Psi(\bm)}) = \vp \in \Delta_{1}^{+}$. 
Since $\kappa( \Psi(\bm)_{\gamma} ) \lhd \vp = \iota(\bq_{\Psi(\bm)})$, 
we deduce that $\Psi(\bm) \in \bC_{w,u,d}$. 
Suppose, for a contradiction, that $K_{\Psi(\bm)} \ne \emptyset$. 
Let $k \in K_{\Psi(\bm)}$; note that $\alpha_{k} \in \Delta_{1}^{+}$, 
$\alpha_{k} \rhd \vp \rhd \psi$, and that 
\begin{equation*}
\pair{\vpi_{k}}{\qwt(\ed(\Psi(\bm)) \Rightarrow u)} < \pair{\vpi_{k}}{d- \qwt(\Psi(\bm))}.
\end{equation*}
We have $\kappa(\bm_{\gamma}) = \vp \lhd \alpha_{k} \rhd \psi = \iota(\bq_{\bm})$. 
Also, we deduce from \eqref{eq:case1a} and \eqref{eq:case1b} that 
\begin{equation*}
\pair{\vpi_{k}}{\qwt(\ed(\bm) \Rightarrow u)} < \pair{\vpi_{k}}{d- \qwt(\bm)}.
\end{equation*}
Hence, we obtain $k \in K_{\bm}$, which contradicts $\bm \in \bC_{w,u,d}^{\emptyset}$. 
Thus, we conclude that $\Psi(\bm) \in \bC_{w,u,d}^{\emptyset}$; notice that 
$\Psi(\bm)$ satisfies the condition of Case 2 below.

\medskip

\paragraph{\bf Case 2.}
%
Assume that either of the following holds: 
(i) $\vp \ne \bzero$ and $\vp \lhd \psi$ (note that $\psi \in \Delta_{1}^{+}$ 
in this case); (ii) $\vp = \bzero$ and $\psi \in \Delta_{1}^{+}$. Namely, we have 
\begin{equation*}
\text{(i)} \quad
\underbrace{w \edge{\bullet} \cdots \edge{\bullet} zs_{\vp} \edge{\vp} z}_{= \, \bm} = 
\underbrace{\ed(\bm) \edge{\psi} \cdots \edge{\bullet} u}_{= \, \bq_{\bm}}, \quad \text{or} 
\end{equation*}
\begin{equation*}
\text{(ii)} \quad 
\underbrace{w \edge{\bullet} \cdots \edge{\bullet} z}_{= \, \bm \, = \, \bm_{\beta}} = 
\underbrace{\ed(\bm) \edge{\psi} \cdots \edge{\bullet} u}_{= \, \bq_{\bm}}.  
\end{equation*}
Then we define $\Psi(\bm)$ as: 
\begin{equation*}
\text{(i)} \quad
\underbrace{w \edge{\bullet} \cdots \edge{\bullet} zs_{\vp} \edge{\vp} z = \ed(\bm) 
\edge{\psi} zs_{\psi}}_{=: \, \Psi(\bm)} = 
\underbrace{\ed(\Psi(\bm)) \edge{\bullet} \cdots \edge{\bullet} u}_{= \, \bq_{\Psi(\bm)}},  
\end{equation*}
\begin{equation*}
\text{(ii)} \quad 
\underbrace{\overbrace{ w \edge{\bullet} \cdots \edge{\bullet} z}^{= \, \bm \, = \, \bm_{\beta}} = \ed(\bm) 
\edge{\psi} zs_{\psi}}_{=: \, \Psi(\bm)} = 
\underbrace{\ed(\Psi(\bm)) \edge{\bullet} \cdots \edge{\bullet} u}_{= \, \bq_{\Psi(\bm)}}. 
\end{equation*}
We claim that $\Psi(\bm) \in \bM_{w,u,d}$. 
If the initial edge of $\bq_{\bm}$ (labeled by $\psi$) is a Bruhat edge, then we have
\begin{equation} \label{eq:case2a}
\begin{split}
& \qwt(\ed(\Psi(\bm)) \Rightarrow u) = \qwt(\ed(\bm) \Rightarrow u), \\
& \qwt(\Psi(\bm)) = \qwt(\bm).
\end{split}
\end{equation}
Since $\qwt(\ed(\bm) \Rightarrow u) \le d - \qwt(\bm)$ by the assumption, 
it follows that $\qwt(\ed(\Psi(\bm)) \Rightarrow u) \le d - \qwt(\Psi(\bm))$. 
If the initial edge of $\bq_{\bm}$ (labeled by $\psi$) is 
a quantum edge, then we have
\begin{equation} \label{eq:case2b}
\begin{split}
& \qwt(\ed(\Psi(\bm)) \Rightarrow u) = 
  \qwt(\ed(\bm) \Rightarrow u) - \psi^{\vee}, \\
& \qwt(\Psi(\bm)) = \qwt(\bm) + \psi^{\vee}.
\end{split}
\end{equation}
Since $\qwt(\ed(\bm) \Rightarrow u) \le d - \qwt(\bm)$ by the assumption, 
it follows that $\qwt(\ed(\Psi(\bm)) \Rightarrow u) \le d - \qwt(\Psi(\bm))$. 
Thus, we have shown that $\Psi(\bm) \in \bM_{w,u,d}$, as desired. 
Also, we see that $\Psi(\bm) \in \bB_{w,u,d}$ 
since $\ell( \Psi(\bm)_{\gamma} ) > 0$. 
Since $\kappa( \Psi(\bm)_{\gamma} ) = \psi \rhd \iota(\bq_{\Psi(\bm)})$, 
we deduce that $\Psi(\bm) \in \bC_{w,u,d}$. 
Suppose, for a contradiction, that $K_{\Psi(\bm)} \ne \emptyset$. 
Let $k \in K_{\Psi(\bm)}$; note that $\alpha_{k} \in \Delta_{1}^{+}$, 
$\vp \lhd \psi \lhd \alpha_{k}$, and 
\begin{equation*}
\pair{\vpi_{k}}{\qwt(\ed(\Psi(\bm)) \Rightarrow u)} < \pair{\vpi_{k}}{d- \qwt(\Psi(\bm))}.
\end{equation*}
We have $\kappa(\bm_{\gamma}) = \vp \lhd \alpha_{k} \rhd \psi = \iota(\bq_{\bm})$. 
Also, we deduce from \eqref{eq:case2a} and \eqref{eq:case2b} that 
\begin{equation*}
\pair{\vpi_{k}}{\qwt(\ed(\bm) \Rightarrow u)} < \pair{\vpi_{k}}{d- \qwt(\bm)}.
\end{equation*}
Hence, we obtain $k \in K_{\bm}$, which contradicts $\bm \in \bC_{w,u,d}^{\emptyset}$. 
Thus, we conclude that $\Psi(\bm) \in \bC_{w,u,d}^{\emptyset}$; notice that 
$\Psi(\bm)$ satisfies the condition of Case 1 above. 

\medskip

In this way, we have defined a sijection $\bm \mapsto \Psi(\bm)$ on $\bC_{w,u,d}^{\emptyset}$. 
Therefore, we conclude that 
\begin{equation*}
\sum_{ \bm \in \bC_{w,u,d}^{\emptyset}  }
(-1)^{\ell(\bm_{\gamma})} \be^{\ed(\bm_{\beta})\lambda} = 0, 
\end{equation*}
as desired. This completes the proof of Theorem~\ref{thm:3pt-lam}. 
\end{proof}

In Appendix~\ref{sec:example}, we give a few examples of Theorem~\ref{thm:3pt-lam}. 

\begin{rem}
It would be interesting to find a geometric explanation of the ``non-negativity'' of 
the coefficients on the right-hand side of equation~\eqref{eq:3ptkgw}. 
\end{rem}

Now, we define elements $\sC_{\lambda,w}^{v} \in \BZ[\Lambda]$ by: 
\begin{equation}
\CO(- \lambda) \cdot \CO^{w} = 
\sum_{v \in W} \sC_{\lambda,w}^{v} \CO^{v},
\end{equation}
where $\cdot$ is the ordinary product in $K_{T}(G/B)$. 
Recall that 
\begin{equation*}
\CO(- \lambda) \star \CO^{w} = \sum_{v \in W,\,\zeta \in Q^{\vee,+} }
\sS_{\lambda,w}^{v,\zeta} \sQ^{\zeta} \CO^{v} = 
\sum_{\bm \in \bM_{w}}
  (-1)^{\ell(\bm_{\gamma})} \be^{ \ed(\bm_{\beta}) \lambda }
  \sQ^{\qwt(\bm)} \CO^{ \ed(\bm) }. 
\end{equation*}
From these equalities, we deduce that 
\begin{equation*}
\sC_{\lambda,w}^{v} = \sum_{\zeta \in Q^{\vee,+}}
\sS_{\lambda,w}^{v,\zeta} \sQ^{\zeta} \biggm|_{\sQ=0} = 
\sum_{ \begin{subarray}{c} \bm \in \bM_{w} \\[1mm] \ed(\bm) = v \end{subarray} }
  (-1)^{\ell(\bm_{\gamma})} \be^{ \ed(\bm_{\beta}) \lambda }
  \sQ^{\qwt(\bm)} \biggm|_{\sQ=0}. 
\end{equation*}
We set 
\begin{equation}
U:=\bigl\{ v \in W \mid 
 \text{$\ed(\bm) = v$ and $\qwt(\bm)=0$ for some $\bm \in \bM_{w}$} \bigr\}.
\end{equation}
Then we see that 
\begin{enumerate}
\item[(a)] if $v \in U$, then 
\begin{equation} \label{eq:ca}
\sum_{\zeta \in Q^{\vee,+}}
\sS_{\lambda,w}^{v,\zeta} \sQ^{\zeta}  \in 
\sC_{\lambda,w}^{v} + \sum_{ 
 \begin{subarray}{c} j \in I \\ \pair{\lambda}{\alpha_{j}^{\vee}} \ne 0 \end{subarray} }
 \sQ_{j} \BZ[\sQ], 
\end{equation}

\item[(b)] if $v \in W \setminus U$, then 
\begin{equation} \label{eq:cb}
\sum_{\zeta \in Q^{\vee,+}}
\sS_{\lambda,w}^{v,\zeta} \sQ^{\zeta}  \in \sum_{ 
 \begin{subarray}{c} j \in I \\ \pair{\lambda}{\alpha_{j}^{\vee}} \ne 0 \end{subarray} }
 \sQ_{j} \BZ[\sQ];
\end{equation}
in particular, we conclude that $\sC_{\lambda,w}^{v} = 0$. 
\end{enumerate}
%
%
\begin{thm} \label{thm:3pt-lam2}
Keep the notation and setting above. We write $d \in Q^{\vee,+}$ as 
$d = \sum_{j \in I} d_{j}\alpha_{j}^{\vee}$, with $d_{j} \in \BZ_{\ge 0}$ for $j \in I$. 
If $d_{j}=0$ for all $j \in I$ such that $\pair{\lambda}{\alpha_{j}^{\vee}} \ne 0$ 
{\rm (}or equivalently, if $\pair{\lambda}{d}=0${\rm )}, then the following holds{\rm :} 
\begin{equation}
\langle \CO(- \lambda),\CO^{w},\CO_{u} \rangle_{d} = 
\langle \CO(- \lambda) \cdot \CO^{w},\CO_{u} \rangle_{d}. 
\end{equation}
\end{thm}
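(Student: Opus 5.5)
We start from \eqref{eq:3pt-lam1}, which expresses the invariant through the coefficients $\sS_{\lambda,w}^{v,\zeta}$:
\begin{equation*}
\langle \CO(- \lambda),\CO^{w},\CO_{u} \rangle_{d} =
\sum_{v \in W,\, \zeta \in Q^{\vee,+}} \sS_{\lambda,w}^{v,\zeta} \twp{v}{u}_{d-\zeta}.
\end{equation*}
The right-hand side of the theorem, expanded with $\CO(-\lambda)\cdot\CO^{w} = \sum_{v} \sC_{\lambda,w}^{v}\CO^{v}$ and bilinearity, equals $\sum_{v \in W} \sC_{\lambda,w}^{v}\twp{v}{u}_{d}$. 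So the task is to show that the terms with $\zeta \ne 0$ contribute nothing, and that the $\zeta = 0$ terms give exactly $\sC_{\lambda,w}^{v}$.

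The key input is the pair of observations (a) and (b) stated just before the theorem, which follow from the explicit Chevalley formula \eqref{eq:gchev3}. We first justify them. Consider any $\bm \in \bM_{w}$. Its $\beta$-labels lie in $\Delta_{-1}^{+}$ and its $\gamma$-labels lie in $\Delta_{1}^{+}$. Hence every quantum edge of $\bm$ carries a label $\alpha$ with $\pair{\lambda}{\alpha^{\vee}} = \pm 1$.

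We claim that $\qwt(\bm) \neq 0$ forces $\qwt(\bm)$ to have a nonzero coefficient at some $j$ with $\pair{\lambda}{\alpha_j^{\vee}} \ne 0$. Suppose instead that $\qwt(\bm) \neq 0$ is supported only on those $j$ with $\pair{\lambda}{\alpha_j^{\vee}} = 0$. Each coroot $\alpha^{\vee}$ of a quantum-edge label is a nonnegative combination of simple coroots. Its support is therefore contained in $\{ j \mid \pair{\lambda}{\alpha_j^{\vee}} = 0 \}$, which forces $\pair{\lambda}{\alpha^{\vee}} = 0$, a contradiction. This establishes (a) and (b): in the expansion of $\sum_{\zeta} \sS_{\lambda,w}^{v,\zeta}\sQ^{\zeta}$, every term with $\zeta \neq 0$ has $\zeta_j > 0$ for some $j$ with $\pair{\lambda}{\alpha_j^{\vee}} \neq 0$. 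Moreover, the $\zeta = 0$ coefficient is $\sS_{\lambda,w}^{v,0} = \sC_{\lambda,w}^{v}$, obtained by specializing at $\sQ = 0$.

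Now impose the hypothesis $d_j = 0$ for every $j$ with $\pair{\lambda}{\alpha_j^{\vee}} \ne 0$. If $\sS_{\lambda,w}^{v,\zeta} \ne 0$ with $\zeta \ne 0$, then $\zeta_j > 0 = d_j$ for some such $j$. Thus $d - \zeta \notin Q^{\vee,+}$, and the invariant $\twp{v}{u}_{d-\zeta}$ vanishes. Indeed, it is zero for non-effective degrees; equivalently, $d-\zeta \geq \qwt(v \Rightarrow u) \geq 0$ fails in \cite[Lemma~4.1]{LNSX}. Hence only $\zeta = 0$ survives, and the invariant equals $\sum_{v} \sC_{\lambda,w}^{v}\twp{v}{u}_{d} = \langle \CO(-\lambda)\cdot\CO^{w}, \CO_u\rangle_d$.

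Finally, the equivalence between the hypothesis and $\pair{\lambda}{d} = 0$ needs a remark, since $\pair{\lambda}{\alpha_j^{\vee}}$ takes values in $\{\pm 1, 0\}$ and could a priori cancel in $\pair{\lambda}{d}$. We would argue as follows. The root $\alpha_j$ can be taken with $\pair{\lambda}{\alpha_j^{\vee}} = -1$ only if $\alpha_j \in \Delta^{+}_{-1}$. Minuscule $\lambda$ admits simple roots of both signs in general, so I expect this equivalence to be the delicate point. I would treat the statement "$d_j=0$ for all such $j$" as the operative hypothesis, which is what the proof above uses. I would only assert the equivalence in the paper's stated sense if it can be checked, for instance when $d$ is supported appropriately. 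The main obstacle is thus just this bookkeeping of supports of quantum weights; the rest is immediate from \eqref{eq:gchev3} and \eqref{eq:3pt-lam1}.
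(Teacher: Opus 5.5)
Your proof is correct and is essentially the paper's own. Both compare $\sQ^{d}$-coefficients in \eqref{eq:3pt-lam1} using observations (a) and (b), and your support argument supplies the justification of (a) and (b) that the paper leaves implicit: every quantum-edge label $\alpha$ of $\bm \in \bM_{w}$ has $\pair{\lambda}{\alpha^{\vee}} = \pm 1$, so a nonzero $\qwt(\bm)$ has a positive coefficient at some $j$ with $\pair{\lambda}{\alpha_j^{\vee}} \ne 0$.

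Your distrust of the parenthetical ``equivalently, $\pair{\lambda}{d} = 0$'' is well founded, and you were right to use only the support condition. Take $\lambda = \eps_2$ in type $A_3$, $w = s_2$, $u = e$, $d = \alpha_1^{\vee} + \alpha_2^{\vee}$. Then $\pair{\lambda}{d} = 0$, and \eqref{eq:3pt-lam2}, applied to the appendix's list of $\bM_{s_2}$, gives $0$ for the left-hand side. The right-hand side is $\be^{s_2\eps_2} = \be^{\eps_3}$: only the $\CO^{s_3s_2}$ term drops out, since $\qwt(s_3s_2 \Rightarrow e) = \alpha_2^{\vee} + \alpha_3^{\vee} \not\le d$. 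So the identity fails under that weaker hypothesis. The equivalence holds exactly when all nonzero $\pair{\lambda}{\alpha_j^{\vee}}$ have the same sign, e.g.\ for $\lambda = \vpi_i$ or $\lambda = \lng \vpi_i$.
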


\begin{proof}
Recall that 
\begin{align*}
& \sum_{\xi \in Q^{\vee,+}}
\sQ^{\xi} \langle \CO(- \lambda),\CO^{w},\CO_{u} \rangle_{\xi} 
= \sum_{v \in W} \left( \sum_{\zeta \in Q^{\vee,+}}\sS_{\lambda,w}^{v,\zeta} \sQ^{\zeta} \right)
\sum_{\xi \in Q^{\vee,+}} \sQ^{\xi} \twp{v}{u}_{\xi} \\[3mm]
& 
= \sum_{v \in U} \biggl( 
\underbrace{ \sum_{\zeta \in Q^{\vee,+}}
\sS_{\lambda,w}^{v,\zeta} \sQ^{\zeta}}_{ \text{see \eqref{eq:ca}} } \biggr)
\sum_{\xi \in Q^{\vee,+}} \sQ^{\xi} \twp{v}{u}_{\xi} + 
\sum_{v \in W \setminus U} \biggl( 
\underbrace{ \sum_{\zeta \in Q^{\vee,+}}
\sS_{\lambda,w}^{v,\zeta} \sQ^{\zeta}}_{ \text{see \eqref{eq:cb}} } \biggr)
\sum_{\xi \in Q^{\vee,+}} \sQ^{\xi} \twp{v}{u}_{\xi}. 
\end{align*}
By comparing the coefficients of $\sQ^{d}$ 
on the leftmost-hand side and the rightmost-hand side, we deduce that 
\begin{align*}
\langle \CO(- \lambda),\CO^{w},\CO_{u} \rangle_{d}
& = \sum_{v \in U} \biggl( \sum_{\zeta \in Q^{\vee,+}} 
\sS_{\lambda,w}^{v,\zeta} \sQ^{\zeta} \biggm|_{\sQ=0} \biggr)
\twp{v}{u}_{d} \\[2mm]
& = \sum_{v \in U} \sC_{\lambda,w}^{v} \twp{v}{u}_{d}
  = \sum_{v \in W} \sC_{\lambda,w}^{v} \twp{v}{u}_{d} \\
& = \langle \CO(- \lambda) \cdot \CO^{w},\CO_{u} \rangle_{d}. 
\end{align*}
This proves the theorem. 
\end{proof}

\appendix

\section{Examples.}
\label{sec:example}

In this appendix, 
for an edge $x \edge{\alpha} y$ in $\QBG(W)$, 
we write $x \Be{\alpha} y$ (resp., $x \Qe{\alpha} y$) 
to indicate that the edge is a Bruhat (resp., quantum) edge. 

We assume that $\Fg$ is of type $A_{3}$. 
We set $\alpha_{ij}:=\sum_{k=i}^{j}\alpha_{k} \in \Delta^{+}$ for $1 \le i \le j \le 3$, and 
$\eps_{2}:=\vpi_{2}-\vpi_{1}=s_{1}\vpi_{1} \in W\vpi_{1}$ (note that $\vpi_{1}$ is a minuscule fundamental weight). 
In the notation of Section~\ref{subsec:chain}, we have $x = s_{1}$ with $a = 1$, and 
$y = s_{3}s_{2}$ with $b = 2$; also, we have 
\begin{equation*}
\Gamma=(-\beta_{1},\gamma_{1},\gamma_{2}) = ( - \alpha_{1}, \alpha_{23}, \alpha_{2}). 
\end{equation*}

%
Let $w = s_{2}$. Then, the set $\bM_{s_{2}}$ consists of the following elements:
\begin{align*}
& \bm_{0} : s_{2} \quad \text{(trivial directed path of length $0$)}, & 
& \bm_{1} : s_{2} \Be{\alpha_{1}} s_{2}s_{1}, \\
& s_{2} \Be{\alpha_{1}} s_{2}s_{1} \Be{\alpha_{2}} s_{2}s_{1}s_{2}, & 
& s_{2} \Be{\alpha_{23}} s_{3}s_{2}, \\
& s_{2} \Be{\alpha_{23}} s_{3}s_{2} \Qe{\alpha_{2}} s_{3}, & 
& s_{2} \Qe{\alpha_{2}} e. 
\end{align*}
By \eqref{eq:gchev3}, we have
\begin{align*}
\CO(- \eps_{2}) \star \CO^{s_{2}} & = 
\be^{s_{2} \eps_{2} } \CO^{s_{2}} + 
\be^{ s_{2}s_{1} \eps_{2} } \CO^{s_{2}s_{1}} -
\be^{ s_{2}s_{1} \eps_{2} } \CO^{s_{2}s_{1}s_{2}} -
\be^{ s_{2} \eps_{2} } \CO^{s_{3}s_{2}} \\
& \quad + 
\be^{ s_{2} \eps_{2} } \sQ_{2} \CO^{s_{3}} - 
\be^{ s_{2} \eps_{2} } \sQ_{2} \CO^{e}.
\end{align*}
Also, we see that for all $u \in W$ and $d \in Q^{\vee,+}$,
\begin{equation*}
\bfR_{s_{2},u,d} \subset \bigl\{ s_{2}, \ s_{2} \Be{\alpha_1} s_{2}s_{1} \bigr\} = 
\bigl\{ \bm_{0},\, \bm_{1} \bigr\};
\end{equation*}
note that $\qwt(\bm) = 0$ for all $\bm \in \bfR_{s_{2},u,d}$. 
Let $d=d_{1}\alpha_{1}^{\vee} + d_{2}\alpha_{2}^{\vee} + d_{3}\alpha_{3}^{\vee} \in Q^{\vee,+}$. 
If $d_{2} \ge 2$, then we find that $\bfR_{s_{2},u,d} = \emptyset$ for all $u \in W$. 
Indeed, if $u \in W$ and $\bm \in \bfR_{s_{2},u,d}$, then we would have 
\begin{equation*}
\qwt(\ed(\bm) \Rightarrow u) \le \qwt(\ed(\bm) \Rightarrow e) + \underbrace{\qwt(e \Rightarrow u)}_{= \, 0}
\le \alpha_{1}^{\vee} + \alpha_{2}^{\vee}. 
\end{equation*}
Hence, 
$\pair{\vpi_{2}}{\qwt(\ed(\bm) \Rightarrow u)} \le 1 < d_{2} 
= \pair{\vpi_{2}}{d - \qwt(\bm)}$, which is a contradiction. 
Thus, we deduce that $\bfR_{s_{2},u,d} = \emptyset$, 
and hence by Theorem~\ref{thm:3pt-lam}, 
\begin{equation*}
\langle \CO(- \eps_{2}),\CO^{s_{2}},\CO_{u} \rangle_{d} = 0 \quad 
\text{for all $u \in W$ (if $d_{2} \ge 2$)}. 
\end{equation*}

Assume next that $d_{2} = 1$. If $u \ge s_{2}s_{1}$ in the Bruhat order $\ge$ on $W$, then 
$\qwt(\ed(\bm) \Rightarrow u) = 0$. By the same argument as above, 
we deduce that $\bfR_{s_{2},u,d} = \emptyset$ in this case. 
Here, observe that there are 12 elements $u \in W$
which do not satisfy $u \ge s_{2}s_{1}$:
\begin{equation*}
W_{\not\ge s_2s_1}:=
\bigl\{ e, s_1, s_2, s_1s_2, s_3, s_3s_1, s_3s_2, s_3s_1s_2, 
s_2 s_3, s_2s_3s_2, s_1s_2s_3, s_1s_2s_3s_2 \bigr\}.
\end{equation*}
Let us check if $\bm_{0} \in \bfR_{s_{2},u,d}$ for each $u \in W_{\not \ge s_2s_1}$. 
If $u \ge s_{2}$, then we see by the same argument as above that $\bm_{0} \notin \bfR_{s_{2},u,d}$. 
Also, for all $u \in W_{\not\ge s_2}:=\bigl\{ e, s_1, s_3, s_3s_1 \bigr\}$, 
we have $\iota(\bq_{\bm_{0}}) = \alpha_{2} \in \Delta_{1}^{+}$. Thus, we deduce that 
$\bm_{0} \notin \bfR_{s_{2},u,d}$ for all $u \in W_{\not\ge s_2}$, 
and hence for all $u \in W_{\not\ge s_2s_1}$. 
Let us check if $\bm_{1} \in \bfR_{s_{2},u,d}$ for each $u \in W_{\not \ge s_2s_1}$. 
If $u \ge s_{2}$, then we have
\begin{equation*}
\qwt(\ed(\bm_{1}) \Rightarrow u) \le \qwt(s_2s_1 \Rightarrow s_2) + 
\underbrace{\qwt(s_2 \Rightarrow u)}_{= \, 0} = \alpha_{1}^{\vee}, 
\end{equation*}
which implies that 
$\pair{\vpi_{2}}{\qwt(\ed(\bm_{1}) \Rightarrow u)} = 0$. 
Hence, $\bm_{1} \not\in \bfR_{s_{2},u,d}$. 
Also, for all $u \in W_{\not\ge s_2}$, we have 
$\iota(\bq_{\bm_{1}}) = \alpha_{2} \in \Delta_{1}^{+}$. Thus, we see that 
$\bm_{1} \notin \bfR_{s_{2},u,d}$ for all $u \in W_{\not\ge s_2}$, and hence 
for all $u \in W_{\not\ge s_2s_1}$. 
Therefore, we conclude that $\bfR_{s_{2},u,d} = \emptyset$ 
for all $u \in W_{\not\ge s_2s_1}$. Combining these, 
we deduce from Theorem~\ref{thm:3pt-lam} that 
\begin{equation*}
\langle \CO(- \eps_{2}),\CO^{s_{2}},\CO_{u} \rangle_{d} = 0 \quad 
\text{for all $u \in W$ (if $d_{2} = 1$)}. 
\end{equation*}

Assume finally that $d_{2} = 0$. For example, let us consider the case 
that $d_{1} > 0$ and $u = s_{2},\,s_{2}s_{1}$. In this case, we see that 
$\bfR_{s_{2},u,d} = \bigl\{ \bm_{0}, \bm_{1} \bigr\}$. Hence, it follows from Theorem~\ref{thm:3pt-lam} that 
\begin{equation*}
\langle \CO(- \eps_{2}),\CO^{s_{1}},\CO_{u} \rangle_{d} = 
\be^{s_{2}\eps_{2}} + \be^{s_{2}s_{1}\eps_{2}} = 
\be^{s_{2}s_{1}\vpi_{1}} + \be^{\vpi_{1}}. 
\end{equation*}
Also, if $d_{1} > 0$ and $u = s_{3}s_{2},\,s_{3}s_{2}s_{1}$, then $\bfR_{s_{2},u,d} = 
\bigl\{ \bm_{1} \bigr\}$. Hence, it follows from Theorem~\ref{thm:3pt-lam} that 
\begin{equation*}
\langle \CO(- \eps_{2}),\CO^{s_{1}},\CO_{u} \rangle_{d} = \be^{s_{2}s_{1}\eps_{2}} 
= \be^{\vpi_{1}}.
\end{equation*}

\end{document}